\documentclass[reqno,11pt]{amsart}
\usepackage{amsmath,amsthm,amssymb,amsfonts,xcolor,cite}
\usepackage{a4wide}
\usepackage[colorlinks]{hyperref}
\usepackage[showonlyrefs]{mathtools}
\mathtoolsset{showonlyrefs=true}
\usepackage{mathrsfs}

\newtheorem{theorem}{Theorem}[section]
\newtheorem{proposition}[theorem]{Proposition}
\newtheorem{corollary}[theorem]{Corollary}
\newtheorem{lemma}[theorem]{Lemma}

\newtheorem{remark}[theorem]{Remark}
\newtheorem{definition}[theorem]{Definition}

\newcommand{\R}{\mathbb R}

\def\eps{\varepsilon}

\def\S{\mathbb{S}}
\def\les{\lesssim}

\newcommand{\dHaus}{\,d\mathcal{H}^{n-1}}

\numberwithin{equation}{section}

\author{Michael Goldman}
\address{LAMA, Univ Paris Est Creteil, Univ Gustave Eiffel, UPEM, CNRS, F-94010 Cr\'eteil, France}
\email{michael.goldman@cnrs.fr}

\author{Matteo Novaga}
\address{Dipartimento di Matematica, Universit\`a di Pisa, Pisa, Italy}
\email{matteo.novaga@unipi.it}

\author{Berardo Ruffini}
\address{Dipartimento di Matematica, Universit\`a di Bologna, Bologna, Italy}
\email{berardo.ruffini@unibo.it}

\title[Second variation of the fractional torsion energy]{Second
variation of the fractional torsion energy\\ and linear 
stability of the ball}

\subjclass[2020]{49Q10, 35R11, 35B35}
\keywords{Shape derivative, fractional Laplacian, torsional rigidity, Saint-Venant inequality, second variation, large solutions}

\begin{document}

\begin{abstract} 
       We
compute the second shape derivative of the fractional torsion. 
It is an explicit quadratic form on the boundary, which
involves a Dirichlet-to-Neumann operator for half-harmonic functions
blowing up at the boundary. We deduce an explicit formula for volume preserving variations on the ball and show 
 that as in the local case, the shape Hessian is negative and coercive in \(H^{1/2}\) modulo
translations.
\end{abstract}

\maketitle

\section{Introduction}
\label{sec:intro}
The aim of this paper is to show, in arguably the simplest meaningful
situation, how second order shape derivatives can be computed for shape
optimization problems involving operators of fractional order. To this end we
compute the second shape derivative of the fractional torsion energy
for smooth domains and the exponent $s=1/2$. While second order shape
derivatives are well understood in the local setting, see e.g.\ \cite{HP,BDV},
and first order derivatives for fractional problems have received considerable
attention, see e.g.\ \cite{DGV,DFW,DFWPohozaev,muratov2022conducting},
second order derivatives in the fractional setting have been less studied.
We expect the approach developed here to be useful for related problems,
such as fractional capacities and first Dirichlet eigenvalues, and for
other exponents $s\in(0,1)$.

Our initial motivation was quantitative stability of the ball. For problems
in which the ball is a volume-constrained optimizer (a minimizer or a
maximizer, depending on the problem), computing the second variation and
establishing its coercivity are essential steps in carrying out the program
of \cite{BDV}, namely a proof of sharp quantitative stability through
the study of nearly spherical sets and a selection principle. We were
indeed on this path when we found, in \cite{GNRfractional}, a much shorter
and less technical proof of quantitative stability, adapting the strategy
of \cite{GMR}. We nevertheless decided to write down a rigorous derivation of
the second variation, since it may be useful for other purposes. With this in
mind we also prove its coercivity at the ball under volume-preserving
variations.

Following the full strategy of \cite{BDV} would require further results.
First, one would need weaker regularity assumptions on the domains, or a
regularity bootstrap for the nearly spherical sets arising in the
contradiction argument; see \cite{Lyu2025,BarriosRosOtonWeidner2025}.
Second, one would need continuity estimates for the second variation in
the relevant Sobolev norm, here $H^{1/2}$. This might require the framework
of \cite{Grubb1996,Grubb2014,Grubb2015}, instead of the regularity results of
\cite{ARO} used here. Finally, the selection principle would require an
adaptation of the regularity theory for fractional Bernoulli free boundary
problems developed in \cite{DeSilvaSavinSire2014}.

Let \(\Omega\subset\R^n\), \(n\ge2\), be a bounded open set with
\(C^\infty\) boundary. The \emph{fractional torsion function}
\(u_\Omega\) is the solution of
\begin{equation}\label{eq:torsion}
 (-\Delta)^{1/2}u_\Omega=1\ \text{ in }\Omega,\qquad u_\Omega=0\ \text{ in }\R^n\setminus\Omega,
\end{equation}
where \((-\Delta)^{1/2}\) is the fractional Laplacian with Fourier symbol
\(|\xi|\),
\begin{equation}\label{eq:normalization}
 (-\Delta)^{1/2}v(x)=c_n\,\mathrm{P.V.}\!\int_{\R^n}\frac{v(x)-v(y)}{|x-y|^{n+1}}\,dy,
 \qquad
 c_n=\frac{\Gamma((n+1)/2)}{\pi^{(n+1)/2}} ,
\end{equation}
and the \emph{fractional torsional rigidity}, or torsion energy, of
\(\Omega\) is 
\[T(\Omega)=\int_\Omega u_\Omega.\] 

The fractional  Saint-Venant
inequality states that balls are the (unique) maximizers of the fractional torsion under volume constraint,
 see e.g. \cite{BCV,GNRfractional}. It is then a natural question to compute the corresponding first and 
 second order shape derivatives. Let us first recall what is known for the first order derivatives from the works \cite{DGV,DFW,DFWPohozaev,DS,muratov2022conducting}. For this we need to introduce some notation.

Problem \eqref{eq:torsion} is understood in the energy space
\begin{equation}\label{eq:energy}
 \begin{split}
 V&=\{v\in H^{1/2}(\R^n):v=0\text{ a.e. in }\R^n\setminus\Omega\},\\
 a(v_1,v_2)&=\frac{c_n}{2}\iint_{\R^{2n}}
 \frac{(v_1(y)-v_1(z))(v_2(y)-v_2(z))}{|y-z|^{n+1}}\,dy\,dz ,
 \end{split}
\end{equation}
so that \(a(v_1,v_2)=\int_{\R^n}v_1(-\Delta)^{1/2}v_2\) for smooth
functions. Then \(u_\Omega\) is the unique element of \(V\) with \(a(u_\Omega,v)=\int_\Omega v\) for
every \(v\in V\), and
\begin{equation}\label{eq:T-max}
 T(\Omega)=\max_{v\in V}\Big\{2\int_\Omega v-a(v,v)\Big\},
\end{equation}
the maximum being attained at \(v=u_\Omega\).

We write \(\nu\) for the outer unit normal to \(\partial\Omega\),
\(H=\operatorname{div}_{\partial\Omega}\nu\) for the mean curvature  and
\(d\) for the distance to \(\partial\Omega\). We fix \(r_0>0\) such that
\((x,r)\mapsto x-r\nu(x)\) is a diffeomorphism from
\(\partial\Omega\times(-2r_0,2r_0)\) onto a neighbourhood of
\(\partial\Omega\), and a function \(\delta\in C^\infty(\overline\Omega)\),
positive in \(\Omega\), with \(\delta=d\) on 
\(\{y\in\Omega:d(y)<r_0\}\). Thus \(d(x-r\nu(x))=\delta(x-r\nu(x))=r\) for
\(x\in\partial\Omega\) and \(0<r<r_0\).

The torsion function behaves like \(\sqrt d\) at the
boundary~\cite{RSJMPA}: by Lemma~\ref{lem:regularity} below,
\(u_\Omega=\sqrt\delta\,U\) with \(U\in C^\infty(\overline\Omega)\). The
function \(U\) depends on the choice of \(\delta\) away from  \(\partial\Omega\),
but its behaviour near \(\partial\Omega\) does not, since there
\(U(x-r\nu(x))=r^{-1/2}u_\Omega(x-r\nu(x))\). The torsion function enters
our formulas only through two boundary quantities: the \emph{weighted trace} \(\psi\) of
\(u_\Omega\) and its first normal coefficient \(\kappa\), defined for
\(x\in\partial\Omega\) by
\begin{equation}\label{eq:coefficients}
 \psi(x)=\lim_{r\downarrow0}\frac{u_\Omega(x-r\nu(x))}{\sqrt r}=U(x),\qquad
 \kappa(x)=\left.\partial_r U(x-r\nu(x))\right|_{r=0},
\end{equation}
so that uniformly in \(x\in\partial\Omega\)
\begin{equation}\label{eq:u-expansion}
 u_\Omega(x-r\nu(x))=\psi(x)\sqrt r+\kappa(x)\,r^{3/2}+O(r^{5/2}).
\end{equation}

In terms of these coefficients, the first variation of \(T\) is as follows.
Let \(\Phi_t\) be  any smooth family of diffeomorphisms of \(\R^n\) with
\(\Phi_0=\mathrm{Id}\), let \(X=\partial_t\Phi_t|_{t=0}\) and
\(\varphi=X\cdot\nu\) on \(\partial\Omega\). Let $\Omega_t=\Phi_t(\Omega)$. Then
\begin{equation}\label{eq:hadamard}
 \frac{d}{dt}T(\Omega_t)\Big|_{t=0}
 =c_*\int_{\partial\Omega}\psi^2\varphi\dHaus,
 \qquad c_*:=\Gamma(3/2)^2=\frac\pi4 .
\end{equation}
As in \cite{DFWPohozaev} where the fractional Dirichlet eigenvalue is considered,  
we recover the first variation formula \eqref{eq:hadamard} through a short application of
 the generalized fractional Pohozaev identity of \cite{DFWPohozaev}; see Remark~\ref{rem:first-variation}.

We now turn to the second order derivative. The difficulty comes from understanding the PDE
 which is solved by the Eulerian derivative $q=\partial_t u_{\Omega_t}|_{t=0}$.
  Indeed, by differentiating the PDE solved by $ u_{\Omega_t}$ we can guess that it is $1/2-$harmonic
   in $\Omega$ and equal to zero outside $\Omega$. What makes it both non-trivial and non-standard is that   
 it  blows up like the inverse
square root of the distance to the boundary. In particular, it lies outside the energy space in which
\eqref{eq:torsion} is posed. These are "large solutions " in the language of \cite{RW,Aba}. 
One of the main points of the analysis is thus to understand a certain
Dirichlet-to-Neumann-type operator \(\mathcal D_\Omega\), defined
in \eqref{eq:D} below, which acts on the boundary data of these \emph{large}
\(\tfrac12\)-harmonic functions. Here we will 
only need an elementary construction of these
functions and of \(\mathcal D_\Omega\) (Proposition~\ref{prop:large}) which we now describe.
 For every \(f\in C^\infty(\partial\Omega)\) we prove that 
there is a unique function \(Q_f\), vanishing outside \(\Omega\) and with
\(\sqrt\delta\,Q_f\in C^\infty(\overline\Omega)\), such that
\begin{equation}\label{eq:large}
 (-\Delta)^{1/2}Q_f=0\ \text{ in }\Omega,\qquad
 (\sqrt\delta\,Q_f)|_{\partial\Omega}=f.
\end{equation}
 Here the equation holds in the sense of
distributions: \(Q_f\) is integrable, but it blows up like \(d^{-1/2}\) at
\(\partial\Omega\) and is not in \(L^2\) unless \(f=0\). We set, for \(x\in\partial\Omega\),
\begin{equation}\label{eq:D}
 \mathcal D_\Omega f(x)
 =\left.\partial_r\big(\sqrt r\,Q_f(x-r\nu(x))\big)\right|_{r=0},
\end{equation}
that is, \(Q_f(x-r\nu(x))=f(x)\,r^{-1/2}+\mathcal D_\Omega f(x)\sqrt
r+O(r^{3/2})\). The construction of \(\mathcal D_\Omega\) in
Proposition~\ref{prop:large} only uses the regularity result Lemma~\ref{lem:regularity}.

We now give the main statement.  We will limit ourselves to the computation in the case of normal graphs.
 This is very common in the study of  shape derivatives, see \cite{HP}.
Fix \(\varphi\in C^\infty(\partial\Omega)\). For $x\in \partial \Omega$ and $|r|\le r_0/2$ we  set
\begin{equation}\label{eq:normal-extension}
 X(x-r\nu(x))=\varphi(x)\nu(x)
\end{equation}
which we extend to a smooth vector field $X$ compactly supported in $\{d<r_0\}$. We then set 
\[
\Phi_t(x)=x+tX(x),
\]
which is a diffeomorphism for $|t|$ small enough. We write  
\[
 \Omega_t=\Phi_t(\Omega),\qquad
 \partial\Omega_t=\{x+t\varphi(x)\nu(x):x\in\partial\Omega\}.
\]

\begin{theorem}\label{thm:hessian}
Let \(\Omega\subset\R^n\) be a bounded open set with
\(C^\infty\) boundary, and let \(\varphi\in
C^\infty(\partial\Omega)\). The function \(t\mapsto
T(\Omega_{t})\) is of class \(C^2\) near
\(0\), and
\begin{equation}\label{eq:hessian}
 \frac{d^2}{d t^2}
 T(\Omega_{t})\Big|_{t=0}
 =\mathscr H_\Omega(\varphi)
 :=c_*\int_{\partial\Omega}
 \Big\{\psi\varphi\,\mathcal D_\Omega(\psi\varphi)
 +\big(H\psi^2-3\psi\kappa\big)\varphi^2\Big\}\dHaus .
\end{equation}
\end{theorem}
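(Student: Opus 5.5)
The plan is to differentiate the first variation formula \eqref{eq:hadamard} along the family \(\Omega_t\), instead of differentiating \(T\) twice directly. Since \(\Phi_{t+h}\circ\Phi_t^{-1}\) is again a smooth family starting at the identity, \eqref{eq:hadamard} applied at \(\Omega_t\) gives
\[
 \frac{d}{dt}T(\Omega_t)=c_*\int_{\partial\Omega_t}\psi_t^2\,\varphi_t\dHaus,
\]
where \(\psi_t\) is the weighted trace of \(u_{\Omega_t}\) and \(\varphi_t=X\circ\Phi_t^{-1}\cdot\nu_t\) is the normal velocity at time \(t\). Pulling this back to \(\partial\Omega\) with \(\Phi_t\) and the tangential Jacobian \(J_t\) of \(\Phi_t|_{\partial\Omega}\), we have to differentiate
\[
 t\mapsto\int_{\partial\Omega}(\psi_t\circ\Phi_t)^2\,(\varphi_t\circ\Phi_t)\,J_t\dHaus
\]
at \(t=0\). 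For a normal graph one has \(\partial_tJ_t|_{0}=H\varphi\), and \(\partial_t(\varphi_t\circ\Phi_t)|_0=0\). Indeed, \(X\) is constant along normals near \(\partial\Omega\), and \(\nu_t\circ\Phi_t=\nu-t\nabla_{\partial\Omega}\varphi+O(t^2)\), so \(X\cdot\nu_t\) changes only at order \(t^2\). These two facts produce the term \(H\psi^2\varphi^2\). What remains is to compute \(\dot\psi:=\partial_t(\psi_t\circ\Phi_t)|_{t=0}\) and show that
\[
 2\psi\dot\psi=\psi\,\mathcal D_\Omega(\psi\varphi)-3\psi\kappa\varphi .
\]

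To compute \(\dot\psi\), I would work with the Eulerian derivative \(q=\partial_tu_{\Omega_t}|_{t=0}\) inside \(\Omega\). Differentiating \(u_{\Omega_t}\circ\Phi_t\) (the Lagrangian derivative \(\dot u\)), which solves a transported equation in the fixed domain, gives \(q=\dot u-X\cdot\nabla u_\Omega\). I expect \(q\) to be \(\tfrac12\)-harmonic in \(\Omega\) and to vanish outside, with a singular expansion at \(\partial\Omega\). Using \eqref{eq:u-expansion} near \(x\in\partial\Omega\), with the boundary of \(\Omega_t\) at normal distance \(-t\varphi(x)\), we get
\[
 u_{\Omega_t}(x-r\nu)=\psi_t\sqrt{r+t\varphi}+\kappa_t(r+t\varphi)^{3/2}+\dots
\]
Differentiating in \(t\) gives
\[
 q(x-r\nu)=\tfrac12\psi\varphi\,r^{-1/2}+\big(\dot\psi_E+\tfrac32\kappa\varphi\big)\sqrt r+O(r^{3/2}),
\]
where \(\dot\psi_E\) is the Eulerian derivative of \(\psi_t\) at \(x\). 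Hence \(\sqrt\delta\,q\) is smooth up to the boundary with trace \(\tfrac12\psi\varphi\), and the uniqueness part of Proposition~\ref{prop:large} gives \(q=Q_{\psi\varphi/2}\). Reading off the \(\sqrt r\) coefficient with \eqref{eq:D} yields \(\dot\psi_E=\tfrac12\mathcal D_\Omega(\psi\varphi)-\tfrac32\kappa\varphi\). Because \(X\) is constant along normals near the boundary, the Lagrangian derivative of \(\psi_t\circ\Phi_t\) coincides with \(\dot\psi_E\), since \(\psi_t\) is read along the normal through \(\Phi_t(x)\). Then \(2\psi\dot\psi_E\varphi\), together with \(H\psi^2\varphi^2\), gives exactly \eqref{eq:hessian}.

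The main obstacle is rigor: justifying the termwise differentiation of the expansions and the \(C^2\) regularity in \(t\). My approach is to pull back by \(\Phi_t\), set \(w_t=u_{\Omega_t}\circ\Phi_t\), and write \(w_t=\sqrt{\delta_t\circ\Phi_t}\,U_t\), where \(\delta_t\circ\Phi_t=\delta+t\varphi\) near the boundary. I would then show, using the transported operator (a smooth perturbation of \((-\Delta)^{1/2}\) with a kernel depending smoothly on \(t\)) and the boundary regularity Lemma~\ref{lem:regularity}, that \(t\mapsto U_t\) is \(C^2\) into \(C^k(\overline\Omega)\). The difference quotients \((U_t-U_0)/t\) satisfy equations whose right-hand sides are bounded in the weighted spaces of Lemma~\ref{lem:regularity}, which gives uniform bounds and the convergence needed. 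The same bounds legitimize the \(O(r^{5/2})\) remainders uniformly in \(t\), and they identify the limit \(q\) as a distributional \(\tfrac12\)-harmonic function with \(\sqrt\delta\,q\in C^\infty(\overline\Omega)\), which is what the uniqueness in Proposition~\ref{prop:large} requires. Finally, continuity of \(t\mapsto\psi_t\circ\Phi_t\) in \(C^1\) gives continuity of the second derivative, so \(t\mapsto T(\Omega_t)\) is \(C^2\) near \(0\).
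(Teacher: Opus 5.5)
\textbf{There is a genuine gap.} Your computation is the formal derivation of Section~\ref{ssec:formal}, and it yields the correct formula. The rigorous part, however, rests entirely on the claim that $t\mapsto U_t$ is $C^2$ into $C^k(\overline\Omega)$. You justify this by saying the difference quotients solve equations ``bounded in the weighted spaces of Lemma~\ref{lem:regularity}''. That lemma contains no such spaces: it is a purely qualitative statement for $(-\Delta)^{1/2}$ on one fixed domain, with no estimates and hence no uniformity in $t$.

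Concretely, $D_t=(v_t-u)/t$ solves $(-\Delta)^{1/2}D_t=-t^{-1}(L_t-L_0)v_t$ in $\Omega$, with $L_t$ as in \eqref{eq:Lt}.
\begin{itemize}
\item Bounding this right-hand side even in $L^\infty$ is the paper's main technical estimate, Lemma~\ref{lem:lipschitz}. A naive expansion only gives $|t|\,d^{-1/2}$, and one needs the exact cancellation of Lemma~\ref{lem:frozen-angular}.
\item That estimate requires $\|v_t/\sqrt\delta\|_{C^2(\overline\Omega)}$ to be bounded uniformly in $t$. This is the very uniform regularity you are trying to prove, so the argument is circular.
\item It also gives only an $L^\infty$ bound, not the $C^k$ bounds your plan asserts.
\end{itemize}
Several steps depend on this missing input: interchanging $\partial_t$ with the boundary limit defining $\psi_t$; the property $\sqrt\delta\,q\in C^2(\overline\Omega)$, which you need to invoke uniqueness in Proposition~\ref{prop:large}; and the $C^2$ regularity of $t\mapsto T(\Omega_t)$. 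All of them need a quantitative, uniform-in-$t$ higher-order boundary regularity theory for a family of domains or variable-kernel operators, which you would have to import. This is exactly the step the paper calls genuinely formal and deliberately avoids.

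There is also a separate error: $\delta_t\circ\Phi_t=\delta+t\varphi$ is false. Since $\Phi_t$ maps $\partial\Omega$ onto $\partial\Omega_t$, $\delta_t\circ\Phi_t$ must vanish on $\partial\Omega$. In fact $\delta_t\circ\Phi_t=\delta/(\gamma_t\circ\Phi_t)$, with $\gamma_t\circ\Phi_t=\beta_t=1+O(t^2)$ on $\partial\Omega$. The expansion $\delta+t\varphi$ is the Eulerian one, at a fixed point.

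\textbf{How the paper avoids the problem.} It never differentiates $\psi_t$.
\begin{itemize}
\item \emph{$C^2$ regularity.} This is soft: $v_t=A_t^{-1}\ell_t$ is $C^2$ in $V$. The exact identity $T(\Omega_t)=\mathcal G(t)+a_t(v_t-u,v_t-u)$ then gives $T''(0)=\mathcal G''(0)+2a(w,w)$.
\item \emph{Identifying $q$.} The identity $q=Q_{\psi\varphi/2}$ is proved by showing $q-Q_{\psi\varphi/2}\in V$, since the $d^{-1/2}$ singularities of $X\cdot\nabla u$ and $\zeta_{\psi\varphi/2}$ cancel. This then yields $w=\sqrt\delta\,W$ with $W$ smooth and $\psi_w=B_\varphi-\frac32\kappa\varphi$, so the regularity of $w$ is derived rather than assumed.
\item \emph{Reduction to the boundary.} The bilinear Pohozaev identity turns $2a(w,w)$ and $\mathcal G''(0)$ into the same volume integral with opposite signs, plus boundary terms. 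The only quantitative estimate used is Lemma~\ref{lem:lipschitz}, applied to the fixed function $u$, whose amplitude is smooth by Lemma~\ref{lem:regularity} on $\Omega$ alone.
\end{itemize}
The result is \eqref{eq:diagonal}, namely $T''(0)=c_*\int_{\partial\Omega}(H\psi^2\varphi^2+2\psi\varphi\,\psi_w)$. This is your formula with $\dot\psi$ replaced by $\psi_w$, obtained without any interchange of $\partial_t$ and the boundary limit.
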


The natural variable in \eqref{eq:hessian} is the weighted displacement
\(\psi\varphi\). The term \(H\psi^2\) comes from the variation of the area
element, and the term \(-3\psi\kappa\) from following the moving boundary
point in the expansion \eqref{eq:u-expansion} (see
\eqref{eq:formal-psi-dot} below). 

\smallskip

In the last part of the paper we compute the second variation of the unit ball for volume constrained (to first order) 
variations.
We show that this Hessian is  negative and coercive in
\(H^{1/2}(\S^{n-1})\) modulo translations. This result might appear surprising at first sight since this coercivity
 is in the same norm as for the local case, see \cite{BDV}. This comes from
  the fact that as in the local case, the operator involved (here $\mathcal D_\Omega(\psi\varphi)$)
   is a Dirichlet-to-Neumann map which is thus of order one.

Let \(\mathcal P\) denote harmonic extension to \(B=B_1\),
\(\Lambda=\partial_\nu\mathcal P\) the classical Dirichlet-to-Neumann
operator, and \(\varphi_\ell\) the projection onto spherical harmonics of
degree \(\ell\). With \(\|\cdot\|_2\) denoting the \(L^2(\S^{n-1})\) norm,
set
\begin{equation}\label{eq:H12R-norm}
 \|\varphi\|_{H^{1/2}_*}^2
 :=\sum_{\ell\ge0}(1+\ell)\|\varphi_\ell\|_2^2
 =\|\varphi\|_2^2+\int_B|\nabla\mathcal P\varphi|^2\,dy,
\end{equation}
which is equivalent to the usual \(H^{1/2}(\S^{n-1})\) norm.  Let $\psi$ be the weighted trace of $u_B$ and set \(\lambda_*=c_*\psi^2\).
 For any $\varphi\in C^\infty(\S^{n-1})$ with 
\begin{equation}\label{eq:ball-volume-tangent}
 \int_{\S^{n-1}}\varphi\dHaus=0,
\end{equation}
define the quadratic form 
\begin{equation}\label{eq:constrained}
\mathcal Q(\varphi)
 :=\lambda_*\int_{\S^{n-1}}
 \big(\varphi\Lambda\varphi-\varphi^2\big)\dHaus
 =\lambda_*\sum_{\ell\ge0}(\ell-1)\|\varphi_\ell\|_2^2 .
\end{equation}
For  $h\in C^\infty(\S^{n-1})$, we set 
$\Omega_h=\{ \rho \theta \ : \ \rho\in[0,1+h(\theta)], \ \theta\in \S^{n-1}\}$.   
\begin{theorem}\label{thm:ball-coercivity}
 For any smooth volume-preserving path \(\Omega_{h(t)}\) with $h(0)=0$, setting  
  \(\varphi=\dot h(0)\) (which satisfies \eqref{eq:ball-volume-tangent} 
  by the volume preserving property),
   we have 
\begin{equation}\label{Hessconstraint}
 -\frac{d^2}{dt^2}T(\Omega_{h(t)})\Big|_{t=0}=\mathcal Q(\varphi).
\end{equation}
If moreover $\varphi$ satisfies
\begin{equation}\label{eq:ball-centering}
 \int_{\S^{n-1}}\varphi\,\nu\dHaus=0,
\end{equation}
then
\begin{equation}\label{eq:ball-coercivity}
 \frac{\lambda_*}{3}\,\|\varphi\|_{H^{1/2}_*}^2
 \le \mathcal Q(\varphi)\le\lambda_*\,\|\varphi\|_{H^{1/2}_*}^2 .
\end{equation}
The constant \(1/3\) is attained exactly on spherical harmonics of degree
two. 
\end{theorem}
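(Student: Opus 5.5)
The plan is to specialize Theorem~\ref{thm:hessian} to the ball. On the ball every ingredient is explicit. Then I pass from straight normal-graph variations to arbitrary volume-preserving paths $h(t)$ by a chain-rule correction. The coercivity bounds then reduce to an inequality between coefficients, one degree $\ell$ at a time.

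\emph{Step 1: boundary data on $B$.} The torsion function of the ball is $u_B(x)=C(1-|x|^2)_+^{1/2}$ for an explicit constant $C$. Set $r=1-|x|$, so that $u_B=C\sqrt{r(2-r)}=C\sqrt2\,\sqrt r\,(1-r/4+O(r^2))$. This gives $\psi=C\sqrt2$ (constant), $\kappa=-\psi/4$, and $H=n-1$.

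\emph{Step 2: the operator $\mathcal D_B$.} Let $f\in C^\infty(\S^{n-1})$, and let $\mathcal Pf$ be its harmonic extension. I claim that
\[
Q_f=\sqrt2\,(1-|x|^2)_+^{-1/2}\,\mathcal Pf .
\]
Two facts justify this.
\begin{itemize}
\item A classical result (Bogdan, Dyda, Abatangelo) says that $(1-|x|^2)_+^{s-1}$ times a harmonic function is $s$-harmonic in $B$. I expect this to be the one genuinely external input. It can be checked on harmonic polynomials, one degree at a time, and then extended by density.
\item We have $\sqrt\delta\,Q_f=\sqrt2\,(2-r)^{-1/2}\mathcal Pf\in C^\infty$ near the sphere, with trace $f$.
\end{itemize}
So the uniqueness part of Proposition~\ref{prop:large} identifies this function as $Q_f$. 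Differentiating $\sqrt2(2-r)^{-1/2}\mathcal Pf((1-r)x)$ at $r=0$ gives
\[
\mathcal D_Bf=\tfrac14 f-\Lambda f .
\]
Insert this into \eqref{eq:hessian}, using that $\psi$ is constant. The result is
\[
\mathscr H_B(\varphi)=\lambda_*\int_{\S^{n-1}}\Big(\tfrac14\varphi^2-\varphi\Lambda\varphi+(n-1)\varphi^2+\tfrac34\varphi^2\Big)
=\lambda_*\int_{\S^{n-1}}\big(n\varphi^2-\varphi\Lambda\varphi\big).
\]

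\emph{Step 3: volume-preserving paths.} On the sphere, the normal-graph deformation $\Phi_t(x)=x+t\varphi(x)x$ maps $B$ exactly onto $\Omega_{t\varphi}$. Hence $\frac{d^2}{dt^2}T(\Omega_{t\varphi})|_{0}=\mathscr H_B(\varphi)$.

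Write $h(t)=t\varphi+\tfrac{t^2}{2}\ddot h(0)+o(t^2)$. The map $h\mapsto T(\Omega_h)$ is $C^2$ along smooth paths; this follows from Theorem~\ref{thm:hessian} by reparametrizing. Its first derivative is $DT[g]=\lambda_*\int g$, by \eqref{eq:hadamard}. The chain rule then gives
\[
\frac{d^2}{dt^2}T(\Omega_{h(t)})\Big|_{t=0}=\mathscr H_B(\varphi)+\lambda_*\int_{\S^{n-1}}\ddot h(0).
\]
The volume is $|\Omega_h|=\frac1n\int(1+h)^n$, and it stays constant along the path. The second-order term of this constraint gives $\int\ddot h(0)=-(n-1)\int\varphi^2$. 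Substituting, we get $-\frac{d^2}{dt^2}T=\lambda_*\int(\varphi\Lambda\varphi-\varphi^2)=\mathcal Q(\varphi)$, which is \eqref{Hessconstraint}. The series form of $\mathcal Q$ follows from $\Lambda\varphi_\ell=\ell\varphi_\ell$.

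\emph{Step 4: coercivity.} Condition \eqref{eq:ball-volume-tangent} kills the component $\varphi_0$. Condition \eqref{eq:ball-centering} kills $\varphi_1$, since the coordinate functions span the degree-one harmonics. For $\ell\ge2$ we have
\[
\tfrac13(1+\ell)\le\ell-1\le1+\ell .
\]
The left inequality is equivalent to $\ell\ge2$, with equality exactly when $\ell=2$. Summing over $\ell$ gives \eqref{eq:ball-coercivity}, and equality in the lower bound holds precisely when $\varphi$ is a spherical harmonic of degree two.

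The main obstacle is Step 2. It requires confirming that the explicit large solution is admissible for the uniqueness statement of Proposition~\ref{prop:large}, and checking its $\tfrac12$-harmonicity in the distributional sense used there.
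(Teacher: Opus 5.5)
Your Steps 1, 2 and 4, and the algebra in Step 3, agree with the paper. You have the same $\psi$ and $\kappa=-\psi/4$, the same Martin-kernel identification $Q_f=\sqrt2(1-|y|^2)^{-1/2}\mathcal Pf$ giving $\mathcal D_Bf=\tfrac14 f-\Lambda f$, the same ray Hessian $\lambda_*\int(n\varphi^2-\varphi\Lambda\varphi)$, and the same degree-by-degree comparison. Using the second-order volume constraint to get $\int\ddot h(0)=-(n-1)\int\varphi^2$ is equivalent to the paper's Lagrangian $\mathcal L=-\mathcal J+\lambda_*(\mathcal V-|B|)$, whose differential vanishes at $0$.

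The gap is the sentence in Step 3 asserting that $h\mapsto T(\Omega_h)$ is $C^2$ along smooth paths because this ``follows from Theorem~\ref{thm:hessian} by reparametrizing''. Theorem~\ref{thm:hessian} only concerns $t\mapsto T((\mathrm{Id}+tX)(\Omega))$ for a fixed field $X$, that is, the rays $t\mapsto t\varphi$. A volume-preserving path with $\varphi\not\equiv0$ is never a reparametrized ray. Indeed, if $h(t)=s(t)\varphi$, then $\int\ddot h(0)=\ddot s(0)\int\varphi=0$, which contradicts your own identity $\int\ddot h(0)=-(n-1)\int\varphi^2$. Moreover, knowing second derivatives along every ray does not yield the chain rule $\frac{d^2}{dt^2}\mathcal J(h(t))|_{t=0}=D^2\mathcal J(0)[\varphi,\varphi]+D\mathcal J(0)[\ddot h(0)]$ along curved paths. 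Even on $\R^2$, a function can vanish near the origin along every line through it and still be discontinuous at the origin along a parabola. What you need is that $\mathcal J(h)=T(\Omega_h)$ is twice Fr\'echet differentiable on a Banach space of $h$'s, and Theorem~\ref{thm:hessian} does not provide this.

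The paper supplies exactly this in Lemma~\ref{lem:ball-exact-path}. The proof of Proposition~\ref{prop:bulk} only uses that the deformation depends in a $C^2$ way on the parameter, with values in $W^{1,\infty}$, and satisfies uniform bi-Lipschitz bounds. So it runs verbatim with the Banach parameter $h\in C^2(\S^{n-1})$ and the deformations $\mathrm{Id}+X_h$, where $X_h(\rho\theta)=\chi(1-\rho)h(\theta)\theta$. These are linear in $h$, and for $h=t\varphi$ they produce the same domains $\Omega_{t\varphi}$ as the normal-graph deformation of Theorem~\ref{thm:hessian}. This gives $\mathcal J\in C^2$ near $0$ in $C^2(\S^{n-1})$, with $D^2\mathcal J(0)[\varphi,\varphi]$ equal to your ray Hessian and $D\mathcal J(0)[g]=\lambda_*\int g$. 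With that in hand, your chain-rule computation becomes rigorous. Note that this regularity is the point of the argument that actually needs proof, not Step~2, which the paper settles by the same citation you propose.
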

Conditions \eqref{eq:ball-volume-tangent} and \eqref{eq:ball-centering} correspond to (first order) 
volume preserving and translation modes.

\subsection{A formal derivation of the second variation}
\label{ssec:formal}

The following computation explains the structure of \eqref{eq:hessian}. It
assumes that all the objects involved are smooth in \(t\) and that all
expansions may be differentiated; it is not used in the proof.

For \(x\in\partial\Omega\) set  \(x_t=\Phi_t(x)=x+t\varphi(x)\nu(x)\), write \(\nu_t\) for the outer normal to \(\Omega_t\), and
\(u_t=u_{\Omega_t}\), \(\psi_t\), \(\kappa_t\) for the torsion function of \(\Omega_t\)
and its boundary coefficients. The normal velocity of \(\partial\Omega_t\)
at \(x_t\) is \(\varphi_t(x_t)=\varphi(x)\nu(x)\cdot\nu_t(x_t)\).

\emph{First variation at time \(t\).} By \eqref{eq:hadamard} applied at
\(\Omega_t\), we have 
\(\frac{d}{dt}T(\Omega_t)=c_*\int_{\partial\Omega_t}\psi_t^2\varphi_t\dHaus\).

\emph{Differentiation of the boundary integral.} Differentiating once more
using the standard formula for boundary integrals, see \cite{HP}, and
noting that $\partial_t\varphi_t(x_t)|_{t=0}=0$, we obtain
\begin{equation}\label{eq:formal-step2}
 \frac{d^2}{dt^2}T(\Omega_t)\Big|_{t=0}
 =c_*\int_{\partial\Omega}\big\{2\psi\,\dot\psi\,\varphi+H\psi^2\varphi^2\big\}\dHaus,
 \qquad \dot\psi(x):=\frac{d}{dt}\psi_t(x_t)\Big|_{t=0},
\end{equation}
and everything reduces to the computation of \(\dot\psi\).

\emph{Differentiation of the weighted trace.} Recall that  \(q=\partial_tu_t|_{t=0}\). We have
\[
 \psi_t(x_t)=\lim_{r\to0}r^{-1/2}u_t(x_t-r\nu_t(x_t)),\qquad
 \partial_t(x_t-r\nu_t(x_t))\big|_{t=0}
 =\varphi\nu+r\nabla_{\partial\Omega}\varphi.
\]
Differentiating at fixed \(r\) gives
\[
 \frac{d}{dt}\Big[\frac{u_t(x_t-r\nu_t(x_t))}{\sqrt r}\Big]_{t=0}
 =\frac1{\sqrt r}\Big\{q(x-r\nu)+\varphi\,\nu\cdot\nabla u_\Omega(x-r\nu)
 +r\,\nabla_{\partial\Omega}\varphi\cdot\nabla u_\Omega(x-r\nu)\Big\}.
\]
By \eqref{eq:u-expansion}, \(\nu\cdot\nabla u_\Omega(x-r\nu)
=-\frac{\psi}{2\sqrt r}-\frac32\kappa\sqrt r+O(r^{3/2})\), while the
tangential part of \(\nabla u_\Omega(x-r\nu)\) is \(O(\sqrt r)\). We shall see
below that
\begin{equation}\label{eq:formal-q}
 q(x-r\nu(x))=\frac{\psi\varphi}{2\sqrt r}+B_\varphi\sqrt r+O(r^{3/2})
\end{equation}
for some function \(B_\varphi\). The two terms of order \(r^{-1/2}\) cancel,
and letting \(r\to0\) gives
\begin{equation}\label{eq:formal-psi-dot}
 \dot\psi=B_\varphi-\tfrac32\kappa\varphi .
\end{equation}

\emph{Identification of \(B_\varphi\).} For \(\chi\in C_c^\infty(\Omega)\)
and \(|t|\) small, \(\int u_t(-\Delta)^{1/2}\chi=\int\chi\); differentiating
in \(t\) shows that \(q\) is \(\tfrac12\)-harmonic in \(\Omega\). It also
vanishes outside \(\overline\Omega\). Let  \(w=\partial_t(u_t\circ
\Phi_t)|_{t=0}\) be the material derivative. By the chain rule we have 
 \[q=w-X\cdot\nabla u_\Omega.\] 
The
function \(w\) vanishes like \(\sqrt d\) on \(\partial\Omega\), whereas
\(X\cdot\nabla u_\Omega=-\frac{\psi\varphi}{2}d^{-1/2}+O(\sqrt d)\) by
\eqref{eq:u-expansion}. Thus \(q\) is a large \(\tfrac12\)-harmonic function
with weighted trace \(\psi\varphi/2\). By uniqueness in \eqref{eq:large},
\(q=Q_{\psi\varphi/2}\), which gives \eqref{eq:formal-q} with
\begin{equation}\label{eq:Bphi}
 B_\varphi=\tfrac12\,\mathcal D_\Omega(\psi\varphi).
\end{equation}
Inserting \eqref{eq:formal-psi-dot} and \eqref{eq:Bphi} into
\eqref{eq:formal-step2} gives  \eqref{eq:hessian}.

Two steps of this computation are formal. The identification \(q=Q_{\psi
\varphi/2}\) requires some care, because \(q\) is not in \(L^2\) (as soon as
\(\varphi\not\equiv0\)) and therefore lies outside the variational framework
in which \(u_t\) is defined; this is done rigorously on the fixed domain in
Proposition~\ref{prop:q}. The genuinely formal step is the derivation of
\eqref{eq:formal-psi-dot}, which interchanges \(\partial_t\) and
\(\lim_{r\to0}\). Justifying it would require expansions of \(u_t\) and of the
large solutions on \(\Omega_t\), together with their \(t\)-derivatives, that
are uniform in \(t\). The proof below avoids this interchange.

\subsection{Strategy of the proof}
\label{ssec:strategy}

The rigorous proof computes \(\frac{d^2}{dt^2}T(\Omega_t)\) without ever
differentiating \(\psi_t\). All steps but one take place on the fixed domain
\(\Omega\).

We pull the problem back to \(\Omega\) by the diffeomorphisms
\(\Phi_t=\mathrm{Id}+tX_\varphi\) of \eqref{eq:normal-extension}, whose
velocity is constant along the normals near \(\partial\Omega\). The
pulled-back states \(v_t=u_t\circ\Phi_t\) solve
\(a_t(v_t,v)=\ell_t(v)\) in the fixed space \(V\), where \(a_t\) and
\(\ell_t\) are obtained from (recall \eqref{eq:energy}) \(a\) and \(\int v\) by a change of variables.
From \eqref{eq:T-max} we get the exact identity
\(T(\Omega_t)=\mathcal G(t)+a_t(v_t-u_\Omega,v_t-u_\Omega)\) with
\(\mathcal G(t)=2\ell_t(u_\Omega)-a_t(u_\Omega,u_\Omega)\), and therefore
\[
 \frac{d^2}{dt^2}T(\Omega_t)\Big|_{t=0}=\mathcal G''(0)+2a(w,w),
\]
where we recall that \(w=\partial_tv_t|_{t=0}\) is the material derivative, see Proposition~\ref{prop:bulk}.
 The aim is then to rewrite the right-hand side as a boundary integral. We start with the term $a(w,w)$.
 For this we show in Proposition~\ref{prop:q}
that \(q=w-X\cdot\nabla u_\Omega=Q_{\psi\varphi/2}\). To prove this equality, we first show that  the
\(d^{-1/2}\) singularities of \(q\) and of \(Q_{\psi\varphi/2}\) cancel, so
that their difference lies in \(V\) and can be treated by a more standard energy argument.
This shows that \(w=\sqrt\delta\,W\) with \(W\) smooth and gives the boundary
value of \(W\) (Lemma~\ref{lem:material-coefficient}). A bilinear Pohozaev
identity from \cite{DFWPohozaev} then turns \(a(w,w)\)
into a volume integral plus a boundary integral
(Corollary~\ref{cor:poho-energy}).

It thus  remains to compute \(\mathcal G''(0)\). The same Pohozaev identity, applied
on \(\Omega_t\) to the transported function \(u_\Omega\circ\Phi_t^{-1}\) and pulled
back to \(\Omega\), gives
\[
 \mathcal G'(t)=2\int_\Omega(L_tu_\Omega-1)\,\hat X_t\cdot\nabla u_\Omega\,J_t+\mathcal B(t),
\]
where \(L_t\) is the pull-back of \((-\Delta)^{1/2}\) by \(\Phi_t\),
\(J_t\) the Jacobian, \(\hat X_t\) a transported velocity, and \(\mathcal
B(t)\) an explicit boundary integral (Lemma~\ref{lem:G-prime}). Only the
kernel of the operator depends on \(t\) here. Since \(L_0u_\Omega=1\), computing
\(\mathcal G''(0)\) amounts to passing to the limit in \((L_tu_\Omega-1)/t\), tested
against \(X\cdot\nabla u_\Omega\). The distributional limit is easy to identify, but
\(X\cdot\nabla u_\Omega\) behaves like \(d^{-1/2}\), so we also need the uniform bound, see Lemma~\ref{lem:lipschitz},
\[
 \|L_tu_\Omega-L_0u_\Omega\|_{L^\infty(\Omega)}\le C|t|,
\]
 which is the main technical estimate of the
paper and which may be seen as a simplified version of related estimates from \cite{ARO}. A naive Taylor expansion only gives a bound of order
\(|t|\,d^{-1/2}\). The proof uses instead that, after the kernel of \(L_t\)
is frozen at the evaluation point, the difference \(L_t-L_0\) becomes an
operator \(\mathscr P_b\) with an even angular density \(b\), and that such
operators annihilate \(\sqrt{\mathsf l_+}\) for every affine function
\(\mathsf l\) (Lemma~\ref{lem:frozen-angular}). The volume integral in
\(\mathcal G''(0)\) then cancels exactly the one produced by the Pohozaev
identity, and only boundary terms remain. 

\subsection*{Organisation of the paper}
Section~\ref{sec:prelim} collects the results on nonlocal equations that we
use: boundary regularity, large solutions and the operator
\(\mathcal D_\Omega\), and the estimate for frozen angular operators.
Section~\ref{sec:proof-hessian} contains the proof of
Theorem~\ref{thm:hessian}, and Section~\ref{sec:ball} the proof of
Theorem~\ref{thm:ball-coercivity}.

\section{Preliminaries on nonlocal equations}
\label{sec:prelim}

Throughout this section \(\Omega\subset\R^n\) is a bounded open set with
\(C^\infty\) boundary, and \(r_0\), \(\delta\), \(V\), \(a\) are as in the
introduction. Functions defined in \(\Omega\) are extended by zero to
\(\R^n\) without further mention. For a function \(g\in L^1(\R^n)\) with
compact support, \((-\Delta)^{1/2}g\) is the distribution
\(\chi\mapsto\int g\,(-\Delta)^{1/2}\chi\), which is well defined since
\((-\Delta)^{1/2}\chi\) is smooth and decays like \(|x|^{-n-1}\) for
\(\chi\in C_c^\infty(\R^n)\).

\subsection{The energy space}

Since \(\partial\Omega\) is smooth, \(C_c^\infty(\Omega)\) is dense in
\(V\); this holds for every bounded open set with continuous boundary,
see~\cite{FSV}. Fix a bounded set \(\mathcal A\subset\R^n\setminus\Omega\) of
positive measure at positive distance from \(\Omega\). Since every \(v\in V\)
vanishes on \(\mathcal A\), keeping only the blocks \(\Omega\times\mathcal A\)
and \(\mathcal A\times\Omega\) in
the definition of \(a\) gives
\begin{equation}\label{eq:L2-by-energy}
 a(v,v)\ \ge\ c_n\int_\Omega|v(y)|^2
 \Big(\int_{\mathcal A}\frac{dz}{|y-z|^{n+1}}\Big)dy
 \ \ge\ c^{-1}\|v\|_{L^2(\Omega)}^2 ,
\end{equation}
with \(c\) depending on \(\Omega\) and \(\mathcal A\). Hence
\(\|v\|_V:=a(v,v)^{1/2}\) is a norm on \(V\), equivalent to the
\(H^{1/2}(\R^n)\) norm, and \(a\) is coercive on \(V\).

We shall use the comparison principle in the following form: if \(v\in V\)
and \(a(v,\phi)\ge0\) for every \(\phi\in V\) with \(\phi\ge0\), then
\(v\ge0\). Indeed, testing with \(\phi=v^-\) and using
\(a(v^+,v^-)\le0\), which holds because
\((v^+(y)-v^+(z))(v^-(y)-v^-(z))\le0\) pointwise, we get
\(0\le a(v,v^-)=a(v^+,v^-)-a(v^-,v^-)\le-a(v^-,v^-)\), so \(v^-=0\). In
particular \(u\ge0\), and comparison with the torsion function of a ball
containing \(\Omega\) shows that \(u\) is bounded.

\subsection{Boundary regularity}

Lemma~\ref{lem:regularity} below is the boundary regularity input of the
paper. It follows from \cite[Theorem~1.4]{ARO} and  \cite[Theorem~2.2 and Corollary~2.3]{ARO}.

\begin{lemma}\label{lem:regularity}
For every \(v\in V\), \(f\in C^\infty(\overline\Omega)\) and
\(A\in C^\infty(\overline\Omega)\),
\begin{equation}\label{eq:regularity}
 \begin{split}
 &(-\Delta)^{1/2}v=f\ \text{ in }\Omega
 \ \Longrightarrow\ v/\sqrt\delta\in C^\infty(\overline\Omega),\\
 &(-\Delta)^{1/2}(\sqrt\delta\,A)
                      \in C^\infty(\overline\Omega).
 \end{split}
\end{equation}
\end{lemma}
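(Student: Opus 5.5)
The statement to prove is Lemma~\ref{lem:regularity}. It is a regularity input quoted from \cite{ARO}, so the plan is mainly to check that the hypotheses of those results hold, and to localize and bootstrap where needed.

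\emph{First implication.} I would argue in three steps.
\begin{enumerate}
\item Since \(v\in V\) is a weak solution of \((-\Delta)^{1/2}v=f\) in \(\Omega\) with \(v=0\) outside \(\Omega\), and \(f\in C^\infty(\overline\Omega)\subset L^\infty\), the standard boundary regularity gives \(v/\sqrt\delta\in C^{\alpha}(\overline\Omega)\) for some \(\alpha>0\). This is the Ros-Oton--Serra result; it is also the base case in \cite{ARO}.
\item Next I would invoke the higher-order boundary regularity of \cite[Theorem~1.4]{ARO}. For \(C^{k+1,\gamma}\) domains and right-hand side \(f\in C^{k-1,\gamma}(\overline\Omega)\), it gives \(v/\delta^{s}\in C^{k,\gamma}(\overline\Omega)\), here with \(s=1/2\).
\item Since \(\partial\Omega\) is \(C^\infty\) and \(f\) is smooth up to the boundary, this holds for every \(k\), so \(v/\sqrt\delta\in C^\infty(\overline\Omega)\).
\end{enumerate}
One point needs checking. \cite{ARO} measure the quotient with the distance function \(d\), or with a regularized distance. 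Near \(\partial\Omega\) our \(\delta\) coincides with \(d\), which is smooth on \(\{d<r_0\}\). Away from the boundary, \(\delta\) is smooth and positive and \(v\) is smooth by interior regularity. So the quotient by \(\delta\) is smooth if and only if the quotient by \(d\) is, and the choice of normalization does not matter.

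\emph{Second implication.} This is the converse mapping property, and it is what \cite[Theorem~2.2 and Corollary~2.3]{ARO} provide: for \(A\in C^{k,\gamma}(\overline\Omega)\) one has \((-\Delta)^{s}(\delta^{s}A)\in C^{k-1,\gamma}(\overline\Omega)\), up to an error that is smooth in \(\Omega\). My plan is to split \(\sqrt\delta\,A\) with a cutoff \(\eta\) supported in \(\{d<r_0\}\), equal to \(1\) near \(\partial\Omega\).
\begin{enumerate}
\item The interior part \((1-\eta)\sqrt\delta\,A\) lies in \(C_c^\infty(\Omega)\). Its half-Laplacian is therefore smooth on \(\R^n\), in particular on \(\overline\Omega\).
\item The boundary part \(\eta\sqrt d\,A\) is exactly the setting of \cite{ARO}. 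Applying their result for all \(k\) gives smoothness up to \(\partial\Omega\).
\end{enumerate}

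\emph{Main obstacle.} The step I would check most carefully is matching the statements of \cite{ARO}. They are written with estimates in Hölder scales and possibly with a specific regularized distance. I expect to verify that the iteration over \(k\) is legitimate: the constants may depend on \(k\), which is harmless. I also need the exterior values (zero) to be compatible with their hypotheses, which holds since we work with functions extended by zero.
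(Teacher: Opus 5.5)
Your proposal is correct and follows the paper, which simply cites \cite[Theorem~1.4]{ARO} for the first implication and \cite[Theorem~2.2 and Corollary~2.3]{ARO} for the second. Your remarks on the regularized distance and on the cutoff that splits off a $C_c^\infty(\Omega)$ piece are the routine checks implicit in that citation. One correction: as you quote \cite[Theorem~1.4]{ARO}, $v/\sqrt\delta$ would gain a full derivative over $f$, but the true gain is only $s=\tfrac12$ (already visible for $f=x_+^{a}$ on a half-line); this is immaterial here because $\partial\Omega$ and $f$ are $C^\infty$ and you apply the result for every order.
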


We shall also need the following energy membership property which essentially follows from the well-known fact that the function \(x\mapsto(x_n)_+^{1/2}\) belongs to
\(H^{1/2}_{\mathrm{loc}}(\R^n)\), see e.g. \cite[Lemmas~3.1 and~3.2]{IMS2016}, together with a localization argument.
\begin{lemma}\label{lem:energy-membership}
If \(A\in C^1(\overline\Omega)\), then \(\sqrt\delta\,A\in V\).
\end{lemma}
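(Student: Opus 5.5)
The plan is to localize with a partition of unity subordinate to a cover of \(\overline\Omega\) by finitely many charts, and to reduce to the model half-space function \((x_n)_+^{1/2}\). Write \(\sqrt\delta\,A=\sum_j\eta_j\sqrt\delta\,A\). Here \(\eta_0\in C_c^\infty(\Omega)\) covers the interior, and each \(\eta_j\), \(j\ge1\), is supported in a small boundary neighbourhood \(\{d<r_0\}\cap B_\rho(x_j)\).

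The interior piece \(\eta_0\sqrt\delta A\) is \(C^1\) with compact support in \(\Omega\), so it lies in \(H^1\subset H^{1/2}\) and vanishes outside \(\Omega\). For a boundary piece, flatten \(\partial\Omega\) by the \(C^\infty\) diffeomorphism \(\Psi(x',r)\mapsto\) (graph point) \(-r\nu\), built from the normal coordinates fixed in the introduction. In these coordinates \(\delta=r=y_n\) on the support, so the piece becomes \(g(y)=(y_n)_+^{1/2}B(y)\) with \(B\in C^1_c\). Since \(H^{1/2}\) is invariant under bi-Lipschitz \(C^1\) changes of variables with compactly supported distortion (the Gagliardo seminorm changes by a bounded factor, because \(|\Psi(y)-\Psi(z)|\simeq|y-z|\) and the Jacobian is bounded), it is enough to show \(g\in H^{1/2}(\R^n)\). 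One must also extend \(\Psi\) to a global bi-Lipschitz map near the support, which is routine. Vanishing outside \(\Omega\) is automatic, since \(g=0\) for \(y_n\le0\).

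For \(g\in H^{1/2}(\R^n)\), we use that \(h(y)=(y_n)_+^{1/2}\chi(y)\), with \(\chi\in C_c^\infty\) equal to \(1\) near the support, lies in \(H^{1/2}(\R^n)\) by \cite[Lemmas~3.1 and~3.2]{IMS2016}. It then suffices to show that multiplication by a \(C^1_c\) function \(B\) preserves \(H^{1/2}\). Write
\[
g(y)-g(z)=B(y)\big(h(y)-h(z)\big)+h(z)\big(B(y)-B(z)\big).
\]
The first term is controlled by \(\|B\|_\infty^2[h]_{H^{1/2}}^2\). The second is bounded by
\[
\int |h(z)|^2\int\frac{\min(\|\nabla B\|_\infty^2|y-z|^2,\,4\|B\|_\infty^2)}{|y-z|^{n+1}}\,dy\,dz\le C\|h\|_{L^2}^2,
\]
since the inner integral converges both near and far from the diagonal. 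Summing the pieces gives \(\sqrt\delta A\in H^{1/2}(\R^n)\), and it vanishes outside \(\Omega\), so it lies in \(V\).

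The main obstacle is the flattening step. One has to check that the composition with \(\Psi\) preserves the Gagliardo seminorm up to constants. This needs a global bi-Lipschitz extension of the chart, or alternatively a direct comparison on \(\Omega\times\Omega\) plus the cross terms \(\Omega\times\Omega^c\). For the cross terms, the contribution \(\int_\Omega|g|^2\,d^{-1}\) is finite only because \(|g|^2\sim d\) makes it integrable. I would first try the global bi-Lipschitz extension, since it avoids handling the Hardy-type term separately.
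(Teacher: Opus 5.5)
Your proposal is correct and takes the same route the paper indicates: the paper only sketches this lemma as a consequence of \((x_n)_+^{1/2}\in H^{1/2}_{\mathrm{loc}}(\R^n)\), citing \cite[Lemmas~3.1 and~3.2]{IMS2016}, together with a localization argument, and your partition of unity, flattening by normal coordinates, bi-Lipschitz invariance and \(C^1_c\) multiplier estimate supply exactly those details. One minor point: \(B\) is a priori defined only on \(\{y_n\ge0\}\), so you should first extend it to a Lipschitz function on \(\R^n\) (for instance by even reflection), which is harmless because it is multiplied by \((y_n)_+^{1/2}\).
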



\subsection{Singular lifting}

To construct large \(\tfrac12\)-harmonic functions, we first write down an
explicit function with the prescribed singularity \(f\,d^{-1/2}\) and check
that its image under \((-\Delta)^{1/2}\) is smooth up to the boundary. The
remaining correction is then found in the energy space \(V\).

\begin{lemma}\label{lem:lift}
Let \(f\in C^\infty(\partial\Omega)\) and let \(F\in
C^\infty(\overline\Omega)\) be supported in \(\{d<r_0\}\), with
\(F(x-r\nu(x))=f(x)\) for \(x\in\partial\Omega\) and \(0\le r<r_0/2\). Let
\(\zeta_f=\delta^{-1/2}F\) in \(\Omega\) and \(\zeta_f=0\) outside
\(\Omega\). Then \(\zeta_f\in L^1(\R^n)\) and
\((-\Delta)^{1/2}\zeta_f\in C^\infty(\overline\Omega)\).
\end{lemma}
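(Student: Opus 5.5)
The proof proceeds by reducing to Lemma~\ref{lem:regularity} through an algebraic identity. The function \(\zeta_f\) is \(L^1\) because \(\delta^{-1/2}\) is integrable near \(\partial\Omega\): in the coordinates \(x-r\nu(x)\) the Jacobian is bounded and \(\int_0^{r_0}r^{-1/2}dr<\infty\), while away from the boundary \(\delta\) is bounded below. For the smoothness of \((-\Delta)^{1/2}\zeta_f\) up to the boundary, I would express \(\zeta_f\) through functions of the form \(\sqrt\delta\,A\), to which the second part of Lemma~\ref{lem:regularity} applies. The natural candidate is a derivative: near \(\partial\Omega\) we have \(\partial_r(\sqrt r)=\tfrac12 r^{-1/2}\), and \(\nabla\delta=-\nu\) there. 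This suggests writing, in \(\Omega\),
\[
\delta^{-1/2}F = 2\,\nabla\sqrt\delta\cdot\nabla\delta\,F\,|\nabla\delta|^{-2}\quad\text{near }\partial\Omega .
\]
To make this global, I would choose a smooth vector field \(Y\) with compact support in \(\{d<r_0\}\) (extended to \(\R^n\)) such that \(Y=\nabla\delta\) on \(\{d<r_0/2\}\cap\Omega\). Then
\[
\zeta_f = 2F\,Y\cdot\nabla\sqrt\delta + R\qquad\text{in }\Omega,
\]
where \(R\) is smooth and compactly supported in \(\Omega\), since the two sides agree near the boundary because \(|\nabla\delta|=1\) there.

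The key identity is the distributional one in \(\R^n\):
\[
2F\,Y\cdot\nabla(\sqrt\delta\,\mathbf 1_\Omega)=\operatorname{div}\big(2FY\sqrt\delta\,\mathbf 1_\Omega\big)-2\operatorname{div}(FY)\sqrt\delta\,\mathbf 1_\Omega .
\]
Here \(\sqrt\delta\,\mathbf 1_\Omega\) is continuous and vanishes on \(\partial\Omega\), so its distributional gradient is the \(L^1\) function \(\mathbf 1_\Omega\nabla\sqrt\delta\) with no boundary measure. This makes the product rule legitimate. Hence
\[
\zeta_f=\sum_i\partial_i(\sqrt\delta\,A_i)+\sqrt\delta\,A_0+R,
\]
with \(A_i=2FY_i\) and \(A_0=-2\operatorname{div}(FY)\) in \(C^\infty(\overline\Omega)\). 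The operator \((-\Delta)^{1/2}\) commutes with \(\partial_i\) in the distributional sense defined in the paper, as one sees by testing against \(\chi\) and using that \((-\Delta)^{1/2}\) commutes with derivatives on test functions. Therefore
\[
(-\Delta)^{1/2}\zeta_f=\sum_i\partial_i\,(-\Delta)^{1/2}(\sqrt\delta A_i)+(-\Delta)^{1/2}(\sqrt\delta A_0)+(-\Delta)^{1/2}R .
\]
The last term is smooth on \(\R^n\). The middle term is in \(C^\infty(\overline\Omega)\) by Lemma~\ref{lem:regularity}.

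The main obstacle is the first sum. Lemma~\ref{lem:regularity} says that \(g_i:=(-\Delta)^{1/2}(\sqrt\delta A_i)\) restricted to \(\Omega\) lies in \(C^\infty(\overline\Omega)\). However, \(g_i\) is only a distribution on \(\R^n\), and I need that its distributional derivative, restricted to \(\Omega\), is the classical derivative of the smooth function there. This is true because restriction to the open set \(\Omega\) commutes with differentiation. So on \(\Omega\), \(\partial_i g_i\) equals the pointwise derivative of a \(C^\infty(\overline\Omega)\) function, which again lies in \(C^\infty(\overline\Omega)\).

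One point needs checking: Lemma~\ref{lem:regularity} is to be read as saying that the distribution \((-\Delta)^{1/2}(\sqrt\delta A)\) restricted to \(\Omega\) is given by a function in \(C^\infty(\overline\Omega)\). Here \(\sqrt\delta A\) is extended by zero outside \(\Omega\) and may be nonzero up to \(\partial\Omega\). If the lemma needs \(A\) to vanish away from the boundary, or some similar normalization, the cutoff \(Y\) can be adjusted accordingly. Combining the three terms gives \((-\Delta)^{1/2}\zeta_f\in C^\infty(\overline\Omega)\).
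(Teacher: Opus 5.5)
Your proposal is correct and follows essentially the same route as the paper. Both write \(\zeta_f\) as \(\operatorname{div}(\sqrt\delta\,A)-\sqrt\delta\operatorname{div}A\), note that the zero extension of \(\sqrt\delta\,A\) lies in \(W^{1,1}(\R^n)\) so no boundary measure appears, commute \((-\Delta)^{1/2}\) with derivatives, and apply the second line of Lemma~\ref{lem:regularity}. The only difference is cosmetic: the paper takes \(A=2F\nabla\delta\) directly, which works because \(|\nabla\delta|=1\) on the support of \(F\), so your cutoff field \(Y\) and remainder \(R\) are not needed.
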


\begin{proof}
The function \(\zeta_f\) is integrable because \(r^{-1/2}\) is integrable
near \(0\). Let \(A=2F\nabla\delta\), then
\begin{equation}\label{eq:lift-identity}
 \zeta_f=\operatorname{div}\big(\sqrt\delta\,A\big)-\sqrt\delta\,\operatorname{div}A
\end{equation}
in the sense of distributions on \(\R^n\). The divergence produces no measure
on \(\partial\Omega\), because \(\sqrt\delta\,A\) belongs to
\(W^{1,1}(\R^n)\) (its gradient is \(O(d^{-1/2})\)) and vanishes on
\(\partial\Omega\). Since \((-\Delta)^{1/2}\) commutes with derivatives,
\begin{equation}\label{eq:lift-image}
 (-\Delta)^{1/2}\zeta_f=\sum_i\partial_i(-\Delta)^{1/2}\big(\sqrt\delta\,A_i\big)
 -(-\Delta)^{1/2}\big(\sqrt\delta\,\operatorname{div}A\big),
\end{equation}
and the second line of \eqref{eq:regularity} shows that the restriction of
the right-hand side to \(\Omega\) belongs to \(C^\infty(\overline\Omega)\).
\end{proof}

\subsection{\texorpdfstring{The Dirichlet-to-Neumann operator $\mathcal D_\Omega$}{The Dirichlet-to-Neumann operator D}}

\begin{proposition}\label{prop:large}
For every \(f\in C^\infty(\partial\Omega)\) there is a unique function
\(Q_f\), vanishing outside \(\Omega\), with \(\sqrt\delta\,Q_f\in
C^\infty(\overline\Omega)\) and satisfying \eqref{eq:large}. With
\(\zeta_f\) as in Lemma~\ref{lem:lift}, \(Q_f=\zeta_f+\omega_f\), where
\(\omega_f\in V\) solves \((-\Delta)^{1/2}\omega_f=-(-\Delta)^{1/2}\zeta_f\) in
\(\Omega\), and
\begin{equation}\label{eq:D-regular}
 \mathcal D_\Omega f=\big(\omega_f/\sqrt\delta\big)\big|_{\partial\Omega}.
\end{equation}
Moreover \(Q_f(x-r\nu(x))=f(x)\,r^{-1/2}+\mathcal D_\Omega f(x)\sqrt
r+O(r^{3/2})\) uniformly in \(x\in\partial\Omega\). Uniqueness holds in the
larger class \(\sqrt\delta\,Q\in C^2(\overline\Omega)\); in particular
\(Q_f\) does not depend on the choice of \(F\).
\end{proposition}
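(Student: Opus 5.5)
Existence comes from the singular lifting of Lemma~\ref{lem:lift} followed by a correction in the energy space; uniqueness comes from showing that a difference of two solutions already lies in \(V\).

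\emph{Existence.} Fix \(F\) and \(\zeta_f\) as in Lemma~\ref{lem:lift}. Then \(g:=(-\Delta)^{1/2}\zeta_f\) belongs to \(C^\infty(\overline\Omega)\), so \(\ell(v)=-\int_\Omega g\,v\) is a bounded linear functional on \(V\). By coercivity \eqref{eq:L2-by-energy} and Lax--Milgram there is a unique \(\omega_f\in V\) with \(a(\omega_f,v)=\ell(v)\) for all \(v\in V\). Testing against \(\chi\in C_c^\infty(\Omega)\) shows that \((-\Delta)^{1/2}\omega_f=-g\) in \(\Omega\) in the distributional sense. The first line of Lemma~\ref{lem:regularity} then gives \(\omega_f/\sqrt\delta\in C^\infty(\overline\Omega)\).

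Set \(Q_f=\zeta_f+\omega_f\). It vanishes outside \(\Omega\) and satisfies \((-\Delta)^{1/2}Q_f=0\) in \(\Omega\). Moreover \(\sqrt\delta\,Q_f=F+\delta\,(\omega_f/\sqrt\delta)\) is smooth up to the boundary with trace \(f\), since \(\delta=0\) on \(\partial\Omega\).

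For the expansion, write \(\omega_f/\sqrt\delta=W\) and note that near the boundary \(F(x-r\nu)=f(x)\) and \(\delta=r\). This gives
\[
Q_f(x-r\nu(x))=f(x)\,r^{-1/2}+W(x-r\nu(x))\sqrt r .
\]
Taylor expanding \(W\) in \(r\) yields the claimed expansion with \(\mathcal D_\Omega f=W|_{\partial\Omega}\), which is \eqref{eq:D-regular}. The remainder is \(O(r^{3/2})\) uniformly in \(x\), by smoothness of \(W\) up to \(\partial\Omega\). This is also consistent with definition \eqref{eq:D}.

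\emph{Uniqueness.} Let \(Q\) vanish outside \(\Omega\), satisfy \(\sqrt\delta\,Q\in C^2(\overline\Omega)\), be \(\tfrac12\)-harmonic in \(\Omega\), and have weighted trace \(0\). Then \(A:=\sqrt\delta\,Q\in C^2(\overline\Omega)\) vanishes on \(\partial\Omega\), so \(A=\delta B\) with \(B\in C^1(\overline\Omega)\). This holds because near the boundary, in the coordinates \((x,r)\), \(A(x,r)=r\int_0^1\partial_rA(x,sr)\,ds\), which is \(C^1\). Hence \(Q=\sqrt\delta\,B\), and Lemma~\ref{lem:energy-membership} gives \(Q\in V\).

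Since \(C_c^\infty(\Omega)\) is dense in \(V\), the identity \(a(Q,\chi)=\int Q(-\Delta)^{1/2}\chi=0\) for \(\chi\in C_c^\infty(\Omega)\) extends to all \(v\in V\). Taking \(v=Q\) gives \(a(Q,Q)=0\), hence \(Q=0\). Applying this to the difference of two solutions (the class and the equation are linear) proves uniqueness in the larger class. Since each \(Q_f\) built from any admissible \(F\) lies in this class, independence from \(F\) follows.

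\emph{Main obstacle.} The delicate point is the identity \(a(Q,\chi)=\int Q\,(-\Delta)^{1/2}\chi\) for \(Q\in V\) and \(\chi\in C_c^\infty(\Omega)\). It links the distributional equation to the bilinear form, and it holds by Plancherel for \(H^{1/2}\) functions. The other step needing care is the factorization \(A=\delta B\) with \(B\in C^1\), which is exactly why the uniqueness class requires \(C^2\) rather than \(C^1\) regularity of \(\sqrt\delta\,Q\). Away from \(\{d<r_0\}\) the factorization is trivial because \(\delta\) is positive there.
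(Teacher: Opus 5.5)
Your proposal is correct and follows the paper's proof essentially step for step. You use the Lax--Milgram correction \(\omega_f\) of the singular lift \(\zeta_f\), then the regularity lemma to write \(\sqrt\delta\,Q_f=F+\delta\,(\omega_f/\sqrt\delta)\), and you get uniqueness by factoring \(\sqrt\delta\,R=\delta A\) with \(A\in C^1(\overline\Omega)\) and invoking the energy-membership lemma. The only addition is that you state explicitly the identity \(a(Q,\chi)=\int Q\,(-\Delta)^{1/2}\chi\) for \(Q\in V\), \(\chi\in C_c^\infty(\Omega)\), which the paper uses implicitly.
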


\begin{proof}
\emph{Existence.} Let \(g_f=(-\Delta)^{1/2}\zeta_f\),
which belongs to \(C^\infty(\overline\Omega)\) by Lemma~\ref{lem:lift}. By
\eqref{eq:L2-by-energy}, \(v\mapsto\int_\Omega g_fv\) is a bounded linear form
on \(V\), and the Lax--Milgram theorem gives a unique \(\omega_f\in V\) with
\(a(\omega_f,v)=-\int_\Omega g_fv\) for every \(v\in V\). Then
\(Q_f=\zeta_f+\omega_f\) vanishes outside \(\Omega\) and
\((-\Delta)^{1/2}Q_f=0\) in \(\Omega\).

\emph{Boundary behaviour.}  The first line of
\eqref{eq:regularity} gives \(\omega_f=\sqrt\delta\,h_f\) with \(h_f\in
C^\infty(\overline\Omega)\), hence
\[
 \sqrt\delta\,Q_f=F+\delta\,h_f .
\]
This shows that \(\sqrt\delta\,Q_f\in C^\infty(\overline\Omega)\) and that
its boundary value is \(f\). For \(x\in\partial\Omega\) and small \(r>0\) we
have \(\delta(x-r\nu(x))=r\) and \(F(x-r\nu(x))=f(x)\), so
\[
 \sqrt r\,Q_f(x-r\nu(x))=f(x)+r\,h_f(x-r\nu(x)) .
\]
Differentiating at \(r=0\) gives \(\mathcal D_\Omega f=h_f|_{\partial\Omega}\),
which is \eqref{eq:D-regular}; here it matters that \(F\) is constant along
normals, so that no derivative of \(F\) appears. The expansion of \(Q_f\)
follows from the same identity and the smoothness of \(h_f\).

\emph{Uniqueness.} Let \(Q_1,Q_2\) be two solutions of \eqref{eq:large} with
the same datum \(f\) and with \(\sqrt\delta\,Q_i\in C^2(\overline\Omega)\).
Set \(R=Q_1-Q_2\) and \(P=\sqrt\delta\,R\in C^2(\overline\Omega)\), so that
\(P=0\) on \(\partial\Omega\). Near  \(\partial\Omega\), where \(r=\delta\) is a
coordinate, \(P(z,r)=r\,A(z,r)\) with \(A(z,r)=\int_0^1\partial_rP(z,\theta
r)\,d\theta\), and \(A\in C^1\) because \(\partial_rP\in C^1\). Away from the
boundary \(\delta\) is bounded below. Hence \(P=\delta A\) with \(A\in
C^1(\overline\Omega)\), and \(R=\sqrt\delta\,A\) belongs to \(V\) by
Lemma~\ref{lem:energy-membership}.  Since \(R\) is \(\tfrac12\)-harmonic in
\(\Omega\), \(a(R,\phi)=0\) for \(\phi\in C_c^\infty(\Omega)\), hence for
every \(\phi\in V\) by density. Taking \(\phi=R\) gives \(R=0\). Two
different choices of \(F\) give two solutions of \eqref{eq:large}, which
therefore coincide.
\end{proof}

In particular \(\mathcal D_\Omega f\) is computed by solving one ordinary
Dirichlet problem, for \(\omega_f\), and then using \eqref{eq:D-regular}.

\subsection{Angular operators}

For an even function \(b\in L^\infty(\S^{n-1})\), that is
\(b(-\theta)=b(\theta)\), we consider the operator
\[
 \mathscr P_b v(y):=c_n\,\mathrm{P.V.}\!\int_{\R^n}
 (v(y)-v(z))\frac{b((z-y)/|z-y|)}{|z-y|^{n+1}}\,dz .
\]
For \(b\equiv1\) this is \((-\Delta)^{1/2}\). Passing to polar coordinates
around \(y\) and pairing the directions \(\theta\) and \(-\theta\), which is
possible since \(b\) is even, we obtain
\begin{equation}\label{eq:Pb-symmetrized}
 \mathscr P_bv(y)=\frac{c_n}{2}\int_{\S^{n-1}}b(\theta)\int_0^\infty
 \frac{2v(y)-v(y+\rho\theta)-v(y-\rho\theta)}{\rho^2}
 \,d\rho\,d\theta.
\end{equation}
Notice that as opposed to the original definition of $ \mathscr P_b$ this integral will always be absolutely convergent in the cases we are interested in.

The following lemma is the key estimate for the transported forcing in
Section~\ref{ssec:forcing}. It may be seen as a simplified version
of~\cite[Corollary~2.3]{ARO}. We include a proof because it is elementary,
 does not require \(b\) to be positive and uses only its
boundedness.

\begin{lemma}\label{lem:frozen-angular}
Let \(U\in C^\infty(\overline\Omega)\) and \(u=\sqrt\delta\,U\). For every
bounded even function \(b:\S^{n-1}\to\R\),
\begin{equation}\label{eq:lipschitz-angular}
 \|\mathscr P_bu\|_{L^\infty(\Omega)}\le C\|b\|_{L^\infty(\S^{n-1})},
\end{equation}
where \(C\) depends only on \(\Omega\), \(\delta\) and
\(\|U\|_{C^2(\overline\Omega)}\).
\end{lemma}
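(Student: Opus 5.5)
Fix \(y\in\Omega\). When \(d(y)\ge r_0/2\), the symmetrized formula \eqref{eq:Pb-symmetrized} is bounded directly. For \(\rho<r_0/4\) the segment \([y-\rho\theta,y+\rho\theta]\) stays in \(\{d>r_0/4\}\), where \(u\) is \(C^2\) with norm controlled by \(\|U\|_{C^2}\). The second difference is therefore \(O(\rho^2)\), and this part contributes \(C\|b\|_\infty\). For \(\rho\ge r_0/4\) we use \(|u|\le C\) and integrate \(\rho^{-2}\). So the whole difficulty is near the boundary, with \(\varepsilon:=d(y)<r_0/2\).

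In that case write \(y=x-\varepsilon\nu(x)\) with \(x\in\partial\Omega\), and let \(\mathsf l(z)=(x-z)\cdot\nu(x)\) be the affine function vanishing on the tangent plane at \(x\), so that \(\mathsf l(y)=\varepsilon\). We compare \(u\) with the model \(u_0:=U(y)\sqrt{\mathsf l_+}\). The first step is to show \(\mathscr P_b(\sqrt{\mathsf l_+})(y)=0\) for every even bounded \(b\). For each fixed \(\theta\), the inner integral in \eqref{eq:Pb-symmetrized} is a one-dimensional integral of \(g(s)=\sqrt{(\varepsilon+s\,\theta\cdot(-\nu))_+}\) along the line \(s\mapsto y+s\theta\). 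Up to scaling, \(g\) is \(\sqrt{(\alpha+s)_+}\) or a constant, and \(\int_0^\infty(2g(0)-g(\rho)-g(-\rho))\rho^{-2}d\rho\) is exactly \((-\partial_{ss})^{1/2}\) of \(\sqrt{(\alpha+s)_+}\) at \(s=0\), up to a constant. This vanishes because \(\sqrt{s_+}\) is \(\tfrac12\)-harmonic on \((0,\infty)\) in one dimension, which is the classical fact for \(s\)-harmonic functions \(x_+^{s}\). Scaling by \(|\theta\cdot\nu|\) preserves this. Hence the inner integral vanishes for each \(\theta\) separately, and so does the total, whatever \(b\) is.

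The second step, which I expect to be the main obstacle, is to estimate \(\mathscr P_b(u-u_0)(y)\) by \(C\|b\|_\infty\) uniformly in \(\varepsilon\). Write \(e=u-u_0=\sqrt\delta\,(U-U(y))+U(y)(\sqrt\delta-\sqrt{\mathsf l_+})\), where \(\delta\) is extended by zero outside \(\Omega\). I would bound the second difference \(|2e(y)-e(y+\rho\theta)-e(y-\rho\theta)|\) in three ranges of \(\rho\).
\begin{itemize}
\item For \(\rho<\varepsilon/2\), both terms are \(C^2\) near \(y\). We have \(|D^2(\sqrt\delta(U-U(y)))|\lesssim \varepsilon^{-1/2}\), since \(U-U(y)=O(\varepsilon)\) on \(B_{\varepsilon/2}(y)\). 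Near \(y\), \(\sqrt\delta-\sqrt{\mathsf l}\) has Hessian \(O(\varepsilon^{-1/2})\), since \(\delta-\mathsf l=O(|z-x|^2)\) with gradient \(O(\varepsilon)\), so the leading \(\varepsilon^{-3/2}\) parts cancel. This gives a contribution \(\varepsilon^{-1/2}\cdot\varepsilon\).
\item For \(\varepsilon/2\le\rho\le r_0/4\), use the pointwise bounds \(|\sqrt\delta(U-U(y))|\lesssim\sqrt{\rho}\,\rho\) on the segment and \(|\sqrt{\delta}-\sqrt{\mathsf l_+}|\lesssim|\delta-\mathsf l_+|^{1/2}\lesssim(\varepsilon+\rho)\). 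Together with \(e(y)\) this gives a bound of order \(\rho\), and the contribution is \(\int_{\varepsilon}^{r_0}\rho^{-1}d\rho\sim\log(1/\varepsilon)\). This is not yet bounded.
\item For \(\rho\ge r_0/4\), use boundedness of \(u\) and \(\sqrt{\mathsf l_+}\) only up to a linear growth \(\rho^{1/2}\); the \(\rho^{-2}\) weight makes this integrable.
\end{itemize}

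The log in the middle range must be removed by a sharper estimate. One option is to subtract the first-order Taylor part as well, replacing \(u_0\) by \((U(y)+\nabla U(y)\cdot(z-y))\sqrt{\mathsf l_+}\). The extra affine-times-\(\sqrt{\mathsf l_+}\) term is odd about \(y\) to leading order, so its symmetrized second difference gains a factor. The other option is to exploit the symmetrization: the odd-in-\(\rho\) part \(\nabla U(y)\cdot\rho\theta\,\sqrt{\mathsf l_+}\) gives \(\rho\big(\sqrt{\mathsf l_+(y+\rho\theta)}-\sqrt{\mathsf l_+(y-\rho\theta)}\big)\), whose \(\rho^{-2}\)-weighted integral is bounded because the bracket is \(O(\rho/\sqrt\varepsilon)\) for small \(\rho\) and \(O(\sqrt\rho)\) for large \(\rho\). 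The remainders are then \(O(\rho^{2}\cdot\rho^{-1/2})\) and \(O(\rho^{3/2})\), both integrable against \(\rho^{-2}\). All bounds depend only on \(\|U\|_{C^2}\), the geometry of \(\Omega\) and \(\delta\), and carry the factor \(\|b\|_\infty\).
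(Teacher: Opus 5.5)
\textbf{There is a genuine gap in the middle range.} Your reduction to small $\varepsilon=d(y)$, the near-field and far-field ranges, and the flat cancellation $\mathscr P_b(\sqrt{\mathsf l_+})(y)=0$ are all sound. Deriving that cancellation direction by direction from the one-dimensional $\tfrac12$-harmonicity of $\sqrt{s_+}$ on $(0,\infty)$ is a valid substitute for the paper's explicit evaluation of $I_0$.

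The problem is in the range $\varepsilon/2\le\rho\le r_0/4$, where you misidentify the source of the logarithm. The term $\sqrt\delta\,(U-U(y))$ is already $O(\rho^{3/2})$ there, as you note yourself, so it contributes $\int\rho^{-1/2}\,d\rho<\infty$. Subtracting $\nabla U(y)\cdot(z-y)\sqrt{\mathsf l_+}$, in either of your two options, improves a term that needed no improvement. The $O(\rho)$ bound comes entirely from the geometric term $U(y)\big(\sqrt\delta-\sqrt{\mathsf l_+}\big)$, that is, from the mismatch between $\partial\Omega$ and its tangent plane at $x$. No Taylor expansion of $U$ touches this term. Your concluding claim that all remainders are $O(\rho^{3/2})$ is therefore false. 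At points $z$ with $|z-y|\sim\rho$ lying within distance $O(\rho^2)$ of $\partial\Omega$ or of the tangent plane, $|\sqrt{\delta(z)}-\sqrt{\mathsf l_+(z)}|$ really is of order $\rho$, and no pointwise estimate can do better.

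\textbf{The missing idea is that this bad set is thin, so you must average rather than estimate pointwise.} Write $\delta=\mathsf d_+$ near $x$, with $\mathsf d$ the signed distance. Then $|\mathsf d(z)-\mathsf l(z)|\le\tau_\rho:=C\rho^2$ on $B_{4\rho}(x)$. Outside the layer $\{|\mathsf l|\le 2\tau_\rho\}$ one has $|\sqrt{\mathsf d_+}-\sqrt{\mathsf l_+}|\le\tau_\rho/\sqrt{|\mathsf l|}$, and the left side even vanishes where $\mathsf l<-2\tau_\rho$.

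The paper packages this via the monotonicity of $s\mapsto\sqrt{s_+}$. With $a=\mathsf l(z)$,
\[
|\sqrt{\mathsf d_+}-\sqrt{a_+}|\le\sqrt{(a+\tau_\rho)_+}-\sqrt{(a-\tau_\rho)_+}.
\]
Once $\tau_\rho\le\rho$ (which forces the middle range to stop at a small fixed scale $R_0$), the integral of the right side over $a\in[-4\rho,4\rho]$ is $O(\tau_\rho\sqrt\rho)=O(\rho^{5/2})$. Multiplying by the $O(\rho^{n-1})$ measure of the tangential cross-sections gives $\int_{\rho\le|z-y|<2\rho}|e(z)|\,dz\le C\rho^{n+3/2}$. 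Since $e(y)=0$, each annulus contributes at most $C\|b\|_\infty\rho^{-n-1}\rho^{n+3/2}=C\|b\|_\infty\rho^{1/2}$ to $\mathscr P_be(y)$. Summing over dyadic $\rho$ from $\varepsilon/2$ to $R_0$ gives a bound independent of $\varepsilon$.

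On each annulus you may use the unsymmetrized form of $\mathscr P_b$, or equivalently average your symmetrized integrand over $\theta\in\S^{n-1}$ and over the radial variable in $[\rho,2\rho]$. Either way, it is this integration, not any further cancellation, that removes the logarithm.
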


\begin{proof}

Let \(\mathsf d\) be the signed distance to \(\partial\Omega\), positive in
\(\Omega\), which is smooth on \(\{|\mathsf d|<2r_0\}\), and extend \(U\)
smoothly to a neighbourhood of \(\overline\Omega\). Let \(C_\Omega\) be a
bound for \(D^2\mathsf d\) on \(\{|\mathsf d|<2r_0\}\), and fix
\(R_0\in(0,r_0/5]\) with \(16C_\Omega R_0\le1\). Fix $y \in \Omega$. We will treat separately the cases $d(y)\ge R_0$ and $d(y)<R_0$.

We start with the easier case \(d(y)\ge R_0\). Since \(d\) is \(1\)-Lipschitz,
\(B_{R_0/2}(y)\) is contained in the compact set
\(K=\{z\in\overline\Omega:d(z)\ge R_0/2\}\), on which \(\delta\) is bounded
below by a positive constant. Hence \(M:=\sup_K|D^2u|<\infty\). For
\(0<\rho<R_0/2\), Taylor's formula applied to
\(\rho\mapsto u(y+\rho\theta)+u(y-\rho\theta)\), whose derivative vanishes at
\(\rho=0\), gives \(|2u(y)-u(y+\rho\theta)-u(y-\rho\theta)|\le M\rho^2\). For
\(\rho\ge R_0/2\) the numerator is bounded by \(4\|u\|_\infty\). Hence
\[
 |\mathscr P_bu(y)|
 \le\frac{c_n}{2}\,|\S^{n-1}|\,\|b\|_\infty
 \Big(\frac{MR_0}{2}+\frac{8\|u\|_\infty}{R_0}\Big)
 \le C\|b\|_\infty .
\]

If instead 
\(d(y)<R_0\), then \(y=x-d(y)\nu(x)\) for a unique \(x\in\partial\Omega\);
\(\mathsf d\) and \(U\) are smooth on \(B_{5R_0}(x)\), and
\(u=U\sqrt{\mathsf d_+}\) there. We first prove that 
\begin{equation}\label{eq:flat-annihilation}
 \mathscr P_b\big((\mathsf l)_+^{1/2}\big)(y)=0
 \qquad\text{whenever \(\mathsf l\) is affine and \(\mathsf l(y)>0\)},
\end{equation}

Let thus \(\mathsf l\) be an affine
function with \(\mathsf l(y)=r>0\), and let \(v=(\mathsf l)_+^{1/2}\). Fix
\(\theta\in\S^{n-1}\) and put \(\alpha=\nabla\mathsf l\cdot\theta\). Since
\(\mathsf l(y\pm\rho\theta)=r\pm\alpha\rho\), the inner integral in
\eqref{eq:Pb-symmetrized} is
\[
 I(r,\alpha):=\int_0^\infty
 \frac{2\sqrt r-\sqrt{(r+\alpha\rho)_+}-\sqrt{(r-\alpha\rho)_+}}{\rho^2}
 \,d\rho .
\]
We claim that it converges absolutely and that
\begin{equation}\label{eq:lipschitz-flat}
 I(r,\alpha)=0\qquad\text{for all }r>0,\ \alpha\in\R .
\end{equation}
After integration this would prove the claimed \eqref{eq:flat-annihilation}. The integrand is even in \(\alpha\) and vanishes for \(\alpha=0\). For
\(\alpha\neq0\), the substitution \(\rho=rs/|\alpha|\) gives
\[
 I(r,\alpha)=\frac{|\alpha|}{\sqrt r}\,I_0,\qquad
 I_0:=\int_0^\infty\frac{g(s)}{s^2}\,ds,\qquad
 g(s):=2-\sqrt{1+s}-\sqrt{(1-s)_+}.
\]
Clearly $|I_0|<\infty$ and we only need to prove that it is zero. Let
\[
 F_-(s)=\log\frac{1-\sqrt{1-s}}{1+\sqrt{1-s}}\quad(0<s<1),\qquad
 F_+(s)=\log\frac{\sqrt{1+s}-1}{\sqrt{1+s}+1}\quad(s>0).
\]
Integrating by parts it may be checked that 
\[ 
2 I_0= \int_0^1(F_-'-F_+') -\int_1^\infty F_+'.
\]
Since $(F_--F_+)(0)=0$, $F_-(1)=0$ and $F_+\to 0$ at infinity, this concludes the proof of $I_0=0$ and thus 
of \eqref{eq:lipschitz-flat}.

Recall now that 
\(y=x-r\nu(x)\) with \(0<r<R_0\), and set
\[
 \mathsf l_y(z)=-(z-x)\cdot\nu(x),\qquad
 h_y(z)=U(y)\sqrt{(\mathsf l_y(z))_+},\qquad e_y=u-h_y .
\]
Then \(\mathsf l_y\) is affine, \(\mathsf l_y(y)=r\), and
\(u(y)=U(y)\sqrt r=h_y(y)\), so \(e_y(y)=0\). By
\eqref{eq:flat-annihilation}, \(\mathscr P_bh_y(y)=0\), hence by linearity,
\(\mathscr P_bu(y)=\mathscr P_be_y(y)\) so that it is enough to bound $\mathscr P_be_y(y)$. We bound separately the
contributions to \eqref{eq:Pb-symmetrized} of \(|z-y|<r/2\), of \(r/2\le|z-y|<R_0\), and of
\(|z-y|\ge R_0\).

 Before doing so we record that since \(\mathsf d(x)=0\) and
\(\nabla\mathsf d(x)=-\nu(x)=\nabla\mathsf l_y\), Taylor's formula gives, for
\(z\in B_{5R_0}(x)\),
\begin{equation}\label{eq:tangent-plane-error}
 |\mathsf d(z)-\mathsf l_y(z)|\le C_\Omega|z-x|^2,\qquad
 |\nabla\mathsf d(z)-\nabla\mathsf l_y|\le C_\Omega|z-x|.
\end{equation}

We start with  \(z\in B_{r/2}(y)\) where both \(\mathsf d\) and \(\mathsf l_y\) are at least
\(r/2\), and \(|z-x|\le2r\). Writing
\(e_y=(U-U(y))\sqrt{\mathsf d}+U(y)\big(\sqrt{\mathsf d}-\sqrt{\mathsf
l_y}\big)\) and differentiating twice, we get
\begin{equation}\label{eq:lipschitz-local-error}
 \|D^2e_y\|_{L^\infty(B_{r/2}(y))}\le Cr^{-1/2}.
\end{equation}
 The second
difference thus satisfies
\(|2e_y(y)-e_y(y+\rho\theta)-e_y(y-\rho\theta)|\le Cr^{-1/2}\rho^2\) for
\(\rho<r/2\), and the near field contributes at most of order  \(\|b\|_\infty\sqrt
r\) to \(\mathscr P_be_y(y)\).

 Let now \(r/2\le\rho\le R_0\) and
consider \(z\) with \(\rho\le|z-y|<2\rho\), so that \(|z-x|\le4\rho\). Since
\(|U(z)-U(y)|\le C\rho\) and \(\sqrt{\mathsf d(z)_+}\le C\sqrt\rho\),
\[
 |e_y(z)|\le C\rho^{3/2}
 +C\Big|\sqrt{\mathsf d(z)_+}-\sqrt{\mathsf l_y(z)_+}\Big| .
\]
The bound \(|\sqrt{\mathsf d_+}-\sqrt{(\mathsf l_y)_+}|\le|\mathsf
d-\mathsf l_y|^{1/2}\le C\rho\) is not sufficient: it would give a
contribution of order one from each dyadic annulus, hence a logarithmic
divergence. We use instead that \(\mathsf d\) and \(\mathsf l_y\) differ by
\(O(\rho^2)\) in the normal direction. Set \(a=\mathsf l_y(z)\) and
\(\eps_\rho=16C_\Omega\rho^2\), so that \(|\mathsf d(z)-a|\le\eps_\rho\) by
\eqref{eq:tangent-plane-error}. Since \(s\mapsto\sqrt{s_+}\) is
nondecreasing, both \(\sqrt{\mathsf d(z)_+}\) and \(\sqrt{a_+}\) lie between
\(\sqrt{(a-\eps_\rho)_+}\) and \(\sqrt{(a+\eps_\rho)_+}\), so
\[
 \Big|\sqrt{\mathsf d(z)_+}-\sqrt{a_+}\Big|
 \le\sqrt{(a+\eps_\rho)_+}-\sqrt{(a-\eps_\rho)_+}.
\]
By the choice of \(R_0\), \(\eps_\rho\le\rho\). Integrating in the normal
coordinate \(a\),
\[
 \int_{-4\rho}^{4\rho}
 \left(\sqrt{(a+\eps_\rho)_+}-\sqrt{(a-\eps_\rho)_+}\right)da
 =\frac23\left((4\rho+\eps_\rho)^{3/2}
                 -(4\rho-\eps_\rho)^{3/2}\right)
 \le C\eps_\rho\sqrt\rho=C\rho^{5/2},
\]
and, since the tangential cross-sections have measure at most
\(O(\rho^{n-1})\),
\begin{equation}\label{eq:lipschitz-annulus-error}
 \int_{\rho\le|z-y|<2\rho}|e_y(z)|\,dz\le C\rho^{n+3/2}.
\end{equation}
As \(e_y(y)=0\) and \(|z-y|^{-n-1}\le\rho^{-n-1}\) on the annulus, each
annulus contributes at most \(O(\|b\|_\infty\rho^{1/2})\) to
\(\mathscr P_be_y(y)\). Summing over \(\rho=2^jr/2<R_0\), \(j\ge0\), gives a
bound \(O(\|b\|_\infty)\) independent of \(r\).

 For \(|z-y|\ge R_0\), \(|e_y(z)|\le\|u\|_\infty+
|U(y)|\sqrt{r+|z-y|}\le C(1+|z-y|^{1/2})\), which is integrable against
\(|z-y|^{-n-1}\) on \(\{|z-y|\ge R_0\}\). The tail contributes at most of the order of
\(\|b\|_\infty\).

Summing the three contributions we conclude that as claimed 
\[
|\mathscr P_bu(y)|=|\mathscr P_be_y(y)|\le C \|b\|_\infty.
\]
\end{proof}

\section{\texorpdfstring{Proof of Theorem~\ref{thm:hessian}}{Proof of Theorem 1.1}}
\label{sec:proof-hessian}

\subsection{Deformations}
\label{ssec:deformations}
In this section the normal deformation direction $\varphi\in C^\infty(\partial \Omega)$ is fixed. For the corresponding family of deformations $\Phi_t$ we set $\Omega_t=\Phi_t(\Omega)$,
$u_t=u_{\Omega_t}$ and 
\(J_t=\det D\Phi_t\).  For \(|t|\) small,
\(\Omega_t\) has \(C^\infty\) boundary, and we fix
\(\delta_t\in C^\infty(\overline{\Omega_t})\), positive in \(\Omega_t\) and
equal to the distance to \(\partial\Omega_t\) near \(\partial\Omega_t\), with
\(\delta_0=\delta\). The function
\(\delta\circ\Phi_t^{-1}\) is smooth on \(\overline{\Omega_t}\), positive in
\(\Omega_t\), and vanishes on \(\partial\Omega_t\) with non-zero gradient.
Writing it in normal coordinates of \(\partial\Omega_t\) and using Taylor's
formula with integral remainder in the normal variable, we obtain
\begin{equation}\label{eq:defining-quotient}
 \delta\circ\Phi_t^{-1}=\gamma_t\,\delta_t,\qquad
 \gamma_t\in C^\infty(\overline{\Omega_t}),\quad \gamma_t>0 .
\end{equation}
On the boundary, \(\gamma_t\) is the normal derivative of
\(\delta\circ\Phi_t^{-1}\): since \(\nabla\delta=-\nu\) on \(\partial\Omega\),
\begin{equation}\label{eq:stretch}
 \gamma_t\big(\Phi_t(x)\big)=\beta_t(x):=\big|D\Phi_t(x)^{-T}\nu(x)\big|,
 \qquad x\in\partial\Omega .
\end{equation}
Since \(D\Phi_t=\mathrm{Id}+t\,DX\) we have 
\begin{equation}\label{eq:stretch-derivative}
 \beta_0=1,\qquad
 \partial_t\beta_t\big|_{t=0}=-\nu\cdot DX\,\nu=0
 \quad\text{on }\partial\Omega .
\end{equation}

\begin{definition}\label{def:derivatives}
The \emph{material derivative} is \(w=\partial_tv_t|_{t=0}\), where
\(v_t=u_t\circ\Phi_t\). The \emph{Eulerian derivative} is
\(q=\partial_tu_t|_{t=0}\) in \(\mathcal D'(\R^n)\).
\end{definition}

By Proposition~\ref{prop:bulk} below, \(w\) exists in \(V\), and by
Proposition~\ref{prop:q}, \(q\) exists and
\begin{equation}\label{eq:q-def}
 q=w-X\cdot\nabla u .
\end{equation}
The function \(w\) vanishes like \(\sqrt d\) on \(\partial\Omega\)
(Lemma~\ref{lem:material-coefficient}), whereas \(X\cdot\nabla u\) behaves
like \(-\frac{\psi\varphi}2d^{-1/2}\). Hence, as soon as \(\varphi\not\equiv0\),
\(q\) does not belong to \(L^2\), and a fortiori not to \(V\): the Eulerian
derivative lies outside the variational framework in which \(u_t\) is
defined. This is the main difficulty in the computation of the second
variation.

\subsection{The bulk second variation}

We first pull the problem back to the fixed domain \(\Omega\) and write the
second derivative of \(T(\Omega_t)\) in terms of the material derivative.
Changing variables by \(\Phi_t\), we define on \(V\)
\begin{equation}\label{eq:pullback}
 \begin{split}
 a_t(v_1,v_2)&=\frac{c_n}{2}\iint
 \frac{(v_1(y)-v_1(z))(v_2(y)-v_2(z))\,J_t(y)J_t(z)}
 {|\Phi_t(y)-\Phi_t(z)|^{n+1}}\,dy\,dz,\\
 \ell_t(v)&=\int_\Omega v\,J_t .
 \end{split}
\end{equation}
Thus \(a_0=a\), \(\ell_0(v)=\int_\Omega v\), and \(a_t(v_1,v_2)=a(v_1\circ
\Phi_t^{-1},v_2\circ\Phi_t^{-1})\). We write \(a_0'\), \(a_0''\),
\(\ell_0'\), \(\ell_0''\) for the derivatives at \(t=0\), which exist in
\(\mathcal L(V,V^*)\) and \(V^*\) by the proof below.

\begin{proposition}\label{prop:bulk}
The map \(t\mapsto v_t=u_t\circ\Phi_t\) is of class \(C^2\) with values in
\(V\), the material derivative \(w\) is the unique solution in \(V\) of
\begin{equation}\label{eq:w}
 a(w,v)=\ell'_0(v)-a'_0(u,v)\qquad(v\in V),
\end{equation}
and
\begin{equation}\label{eq:bulk}
 \frac{d^2}{dt^2}T(\Omega_t)\Big|_{t=0}=2\ell''_0(u)-a''_0(u,u)+2a(w,w).
\end{equation}
\end{proposition}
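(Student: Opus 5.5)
The proof is a standard implicit-function argument on the fixed space \(V\), combined with the exact identity from \eqref{eq:T-max} described in Section~\ref{ssec:strategy}.

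\emph{Step 1: regularity of the pulled-back forms.} I first show that \(t\mapsto a_t\in\mathcal L(V,V^*)\) and \(t\mapsto\ell_t\in V^*\) are of class \(C^2\) (in fact \(C^\infty\)) near \(0\). For \(\ell_t\) this is immediate, since \(J_t\) is a polynomial in \(t\) with smooth coefficients and \eqref{eq:L2-by-energy} controls \(L^2\) by the \(V\)-norm. For \(a_t\) I write the kernel as \(k_t(y,z)|y-z|^{-n-1}\), with
\[
 k_t(y,z)=J_t(y)J_t(z)\Big(\frac{|y-z|}{|\Phi_t(y)-\Phi_t(z)|}\Big)^{n+1}.
\]
Because \(\Phi_t(y)-\Phi_t(z)=(y-z)+t\big(X(y)-X(z)\big)\), \(X\) is Lipschitz and \(|t|\) is small, the ratio \(|\Phi_t(y)-\Phi_t(z)|/|y-z|\) is bounded above and below uniformly. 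Moreover \(k_t\) is smooth in \(t\), with \(\partial_t^jk_t\) bounded uniformly in \((y,z)\) for \(j\le3\). By Cauchy--Schwarz, \(|a_t^{(j)}(v_1,v_2)|\le C\|v_1\|_V\|v_2\|_V\), and Taylor's formula in \(t\), with a uniform remainder, gives \(C^2\) dependence in operator norm. This is the only genuinely technical point. It remains elementary because the bounds on \(\partial_t^jk_t\) are uniform, with no singular cancellation needed.

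\emph{Step 2: \(v_t\) is \(C^2\) and \(w\) solves \eqref{eq:w}.} Since \(\Phi_t\) preserves \(V\)-membership and changes variables in the weak formulation, \(v_t\) is characterised by \(a_t(v_t,v)=\ell_t(v)\) for all \(v\in V\). For small \(t\), \(a_t\) is uniformly coercive, being a small perturbation of \(a\). Let \(A_t\) denote the associated isomorphism \(V\to V^*\); then \(v_t=A_t^{-1}\ell_t\). Inversion is smooth on isomorphisms, so \(t\mapsto v_t\) is \(C^2\) into \(V\). Differentiating \(a_t(v_t,v)=\ell_t(v)\) at \(t=0\) gives \(a(w,v)+a_0'(u,v)=\ell_0'(v)\), which is \eqref{eq:w}. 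Uniqueness of \(w\) follows from Lax--Milgram.

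\emph{Step 3: formula \eqref{eq:bulk}.} By \eqref{eq:T-max} on \(\Omega_t\), pulled back by \(\Phi_t\), we have \(T(\Omega_t)=\max_{v\in V}\{2\ell_t(v)-a_t(v,v)\}\), attained at \(v_t\). Expanding around \(u\) and using \(a_t(v_t,\cdot)=\ell_t\) gives the exact identity
\[
T(\Omega_t)=\mathcal G(t)+a_t(v_t-u,v_t-u),\qquad \mathcal G(t)=2\ell_t(u)-a_t(u,u).
\]
Both terms are \(C^2\) by Steps 1 and 2. Since \(v_0-u=0\), the second derivative at \(0\) of the quadratic term is \(2a(w,w)\): every term containing an undifferentiated factor \(v_t-u\) vanishes at \(t=0\). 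Adding \(\mathcal G''(0)=2\ell_0''(u)-a_0''(u,u)\) yields \eqref{eq:bulk}, and this also shows \(t\mapsto T(\Omega_t)\) is \(C^2\).
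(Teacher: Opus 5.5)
Your proposal is correct and follows the paper's proof essentially step for step: the pulled-back state equation $a_t(v_t,v)=\ell_t(v)$, $C^2$ dependence of $a_t$ and $\ell_t$ in operator norm, smooth inversion of $A_t$, and the exact identity $T(\Omega_t)=\mathcal G(t)+a_t(v_t-u,v_t-u)$. The only difference is in Step 1, where you get operator-norm $C^2$ regularity from uniform bounds on $\partial_t^jk_t$ for $j\le3$ and Taylor's formula, using that $\Phi_t$ is affine in $t$; the paper instead shows that $t\mapsto\rho_t$ is $C^2$ into $L^\infty(\R^{2n})$ through $\Xi_t$, an argument that also covers the Banach-parameter deformations used in Lemma~\ref{lem:ball-exact-path}.
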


\begin{proof}
\emph{The equation on the fixed domain.} Composition with the uniformly
bi-Lipschitz maps \(\Phi_t\) preserves \(H^{1/2}(\R^n)\), with uniform bounds,
and \(u_t\circ\Phi_t\) vanishes outside \(\Omega\); hence \(v_t\in V\) for
every \(t\). Changing variables in the weak formulation of \eqref{eq:torsion}
on \(\Omega_t\) and testing with \(v\circ\Phi_t^{-1}\), \(v\in V\), we obtain
\begin{equation}\label{eq:state-fixed}
 a_t(v_t,v)=\ell_t(v)\qquad(v\in V).
\end{equation}

\emph{Regularity of \(a_t\) and \(\ell_t\).} We isolate the dependence on
\(t\) in the multiplier
\begin{equation}\label{eq:rho}
 \begin{gathered}
 \rho_t(y,z)=J_t(y)J_t(z)
 \left(\frac{|y-z|}{|\Phi_t(y)-\Phi_t(z)|}\right)^{n+1},\\
 a_t(v_1,v_2)=\frac{c_n}{2}\iint
 \frac{(v_1(y)-v_1(z))(v_2(y)-v_2(z))}{|y-z|^{n+1}}\,\rho_t(y,z)\,dy\,dz .
 \end{gathered}
\end{equation}
Let \(\Xi_t(y,z)=(\Phi_t(y)-\Phi_t(z))/|y-z|\) for \(y\neq z\). The map
\(g\mapsto(g(y)-g(z))/|y-z|\) is linear and bounded from
\(W^{1,\infty}(\R^n;\R^n)\) to \(L^\infty(\R^{2n};\R^n)\). Since \(t\mapsto
\Phi_t-\mathrm{Id}\) is \(C^2\) with values in \(W^{1,\infty}\), \(t\mapsto
\Xi_t\) is \(C^2\) with values in \(L^\infty(\R^{2n})\). The uniform
bi-Lipschitz bounds show that \(\Xi_t\) takes values in a fixed compact subset
of \(\R^n\setminus\{0\}\), on which \(\xi\mapsto|\xi|^{-n-1}\) is smooth, so
\(t\mapsto|\Xi_t|^{-n-1}\) is also \(C^2\) into \(L^\infty\). Since \(J_t\) is a
polynomial in the entries of \(D\Phi_t\), we conclude that
\begin{equation}\label{eq:rho-C2}
 \rho_t\in C^2\big((-t_0,t_0);L^\infty(\R^{2n})\big).
\end{equation}
Note that this is differentiability with respect to the \(L^\infty\) norm,
not merely a pointwise bound on \(\partial_t^k\rho_t\). For \(m\in
L^\infty(\R^{2n})\) let \(\mathsf B_m\) be the bilinear form defined as in
\eqref{eq:rho} with \(\rho_t\) replaced by \(m\). The Cauchy--Schwarz
inequality gives
\begin{equation}\label{eq:Bm}
 |\mathsf B_m(v_1,v_2)|\le\|m\|_{L^\infty}\|v_1\|_V\|v_2\|_V ,
\end{equation}
so \(m\mapsto\mathsf B_m\) is a bounded linear map from
\(L^\infty(\R^{2n})\) into \(\mathcal L(V,V^*)\). Since \(a_t=\mathsf
B_{\rho_t}\), \eqref{eq:rho-C2} and \eqref{eq:Bm} give
\(a_t\in C^2((-t_0,t_0);\mathcal L(V,V^*))\), with
\(\partial_t^ka_t=\mathsf B_{\partial_t^k\rho_t}\). The uniform bounds
\(0<c\le\rho_t\le C\) give for $v\in V$,
\begin{equation}\label{eq:unif-coercive}
 c\,a(v,v)\le a_t(v,v)\le C\,a(v,v).
\end{equation}
By \eqref{eq:L2-by-energy}, \(|\int_\Omega vJ|\le C\|J\|_{L^\infty}\|v\|_V\);
since \(J_t\in C^2_tL^\infty_x\), we get \(\ell_t\in C^2((-t_0,t_0);V^*)\).

\emph{Differentiability of \(v_t\).} Let \(A_t\in\mathcal L(V,V^*)\) be
defined by \(\langle A_tv_1,v_2\rangle=a_t(v_1,v_2)\), so that
\eqref{eq:state-fixed} reads \(A_tv_t=\ell_t\). By \eqref{eq:unif-coercive}
and the Lax--Milgram theorem, each \(A_t\) is invertible with
\(\|A_t^{-1}\|\le c^{-1}\). Inversion is smooth on the open set of invertible
operators, so \(t\mapsto A_t^{-1}\) is \(C^2\) into \(\mathcal L(V^*,V)\), and
\(v_t=A_t^{-1}\ell_t\in C^2((-t_0,t_0);V)\). Differentiating
\eqref{eq:state-fixed} at \(t=0\) gives \(a'_0(u,v)+a(w,v)=\ell'_0(v)\), that
is \eqref{eq:w}, which determines \(w\) by coercivity.

\emph{The second variation.} Let \(F_t(v)=2\ell_t(v)-a_t(v,v)\). By
\eqref{eq:state-fixed}, \(F_t(v_t)=\ell_t(v_t)=\int_{\Omega_t}u_t=T(\Omega_t)\).
Set \(\mathcal G(t)=F_t(u)=2\ell_t(u)-a_t(u,u)\) and \(h=v_t-u\). Using
\eqref{eq:state-fixed},
\[
 F_t(v_t)-F_t(u)=2\ell_t(h)-2a_t(u,h)-a_t(h,h)
 =2a_t(v_t,h)-2a_t(u,h)-a_t(h,h)=a_t(h,h),
\]
which gives the exact identity
\begin{equation}\label{eq:bulk-exact}
 T(\Omega_t)=\mathcal G(t)+a_t(v_t-u,v_t-u).
\end{equation}
The last term is \(C^2\) in \(t\), and since \(v_t-u=tw+o(t)\) in \(V\) and
\(a_t\to a\) in operator norm, it equals \(t^2a(w,w)+o(t^2)\); its second
derivative at \(t=0\) is \(2a(w,w)\). Since \(u\) is fixed,
\(\mathcal G''(0)=2\ell''_0(u)-a''_0(u,u)\), and \eqref{eq:bulk} follows.
\end{proof}

\subsection{The Eulerian derivative}

We now identify the Eulerian derivative with the large solution of
Proposition~\ref{prop:large} with datum \(\psi\varphi/2\). This is the step
where the singular behaviour of \(q\) is handled.

\begin{proposition}\label{prop:q}
The Eulerian derivative is the integrable function
\begin{equation}\label{eq:q}
 q=w-X\cdot\nabla u=Q_{\psi\varphi/2},
\end{equation}
so that, for \(x\in\partial\Omega\),
\begin{equation}\label{eq:q-expansion}
 q(x-r\nu(x))=\frac{\psi\varphi}{2\sqrt r}+B_\varphi\sqrt r+O(r^{3/2}),
 \qquad B_\varphi=\tfrac12\mathcal D_\Omega(\psi\varphi).
\end{equation}
\end{proposition}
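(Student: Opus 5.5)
Three things have to be established: that $q$ exists as a distributional limit and equals $w-X\cdot\nabla u$; that this function is a large $\tfrac12$-harmonic function with datum $\psi\varphi/2$, so that Proposition~\ref{prop:large} identifies it; and that its expansion has the form \eqref{eq:q-expansion}.

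\emph{Existence and formula.} For $\chi\in C_c^\infty(\R^n)$, changing variables gives
\[
\int u_t\chi=\int v_t\,(\chi\circ\Phi_t)\,J_t .
\]
By Proposition~\ref{prop:bulk}, $t\mapsto v_t$ is $C^1$ into $V\subset L^2$. The maps $t\mapsto\chi\circ\Phi_t$ and $t\mapsto J_t$ are smooth. Differentiating at $t=0$ therefore gives
\[
\int w\chi+\int_\Omega u\,(X\cdot\nabla\chi+\chi\operatorname{div}X).
\]
The second integral equals $\int_\Omega u\,\operatorname{div}(\chi X)$. Integrating by parts, and using $u=\sqrt\delta\,U$, we have $\nabla u=O(d^{-1/2})$, which is integrable, and no boundary term appears because $u=0$ on $\partial\Omega$. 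This turns it into $-\int\chi X\cdot\nabla u$, so $q=w-X\cdot\nabla u$ is an integrable function.

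\emph{Harmonicity.} Fix $\chi\in C_c^\infty(\Omega)$. For $|t|$ small, $\operatorname{supp}\chi\subset\Omega_t$, hence $\int u_t(-\Delta)^{1/2}\chi=\int\chi$. The argument of the previous step applies verbatim with $\chi$ replaced by the smooth, rapidly decaying test function $(-\Delta)^{1/2}\chi$; the decay is harmless since $u_t$ is supported in a fixed compact set. It gives $\int q\,(-\Delta)^{1/2}\chi=0$, so $(-\Delta)^{1/2}q=0$ in $\Omega$.

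\emph{Identification of the singular part.} This is the main step. By \eqref{eq:normal-extension}, near $\partial\Omega$ we have $X(x-r\nu)=\varphi(x)\nu(x)$ and $\nu\cdot\nabla=-\partial_r$. Then \eqref{eq:u-expansion} gives
\[
X\cdot\nabla u=-\tfrac{\psi\varphi}{2}\,r^{-1/2}-\tfrac32\kappa\varphi\,r^{1/2}+O(r^{3/2}).
\]
More precisely, $\sqrt\delta\,X\cdot\nabla u=\tfrac12 X\cdot\nabla\delta\,U+\delta\,X\cdot\nabla U$ is smooth on $\overline\Omega$. The remaining difficulty is the regularity of $w$: we need $w=\sqrt\delta\,W$ with $W$ smooth, which amounts to showing that $w$ solves an equation with right-hand side in $C^\infty(\overline\Omega)$ so that Lemma~\ref{lem:regularity} applies.

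I would obtain this by applying uniqueness to $R:=q-Q_{\psi\varphi/2}$, after first showing $R\in V$. Write
\[
R=w-\bigl(X\cdot\nabla u+\zeta_{\psi\varphi/2}\bigr)-\omega_{\psi\varphi/2},
\]
where $\zeta_{\psi\varphi/2}$ and $\omega_{\psi\varphi/2}$ are the pieces of $Q_{\psi\varphi/2}$ given by Proposition~\ref{prop:large}. The bracket equals $\delta^{-1/2}(\sqrt\delta\,X\cdot\nabla u+F)$. Its numerator is smooth and vanishes on $\partial\Omega$, because there $-\tfrac12\psi\varphi+\tfrac12\psi\varphi=0$. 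Hence the bracket is $\sqrt\delta\,A$ with $A\in C^\infty(\overline\Omega)$, so it lies in $V$ by Lemma~\ref{lem:energy-membership}. We already know $w\in V$ and $\omega_{\psi\varphi/2}\in V$, so $R\in V$.

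Moreover $R$ is $\tfrac12$-harmonic in $\Omega$ in the distributional sense, since both $q$ and $Q_{\psi\varphi/2}$ are. By density of $C_c^\infty(\Omega)$ in $V$, $a(R,\phi)=0$ for all $\phi\in V$. Taking $\phi=R$ gives $R=0$, that is $q=Q_{\psi\varphi/2}$.

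\emph{Consequences.} From $q=Q_{\psi\varphi/2}$ and the smoothness of $\sqrt\delta\,X\cdot\nabla u$, we get $\sqrt\delta\,w=\sqrt\delta\,Q_{\psi\varphi/2}+\sqrt\delta\,X\cdot\nabla u$, which is smooth and vanishes on $\partial\Omega$. Hence $w=\sqrt\delta\,W$ with $W$ smooth. Finally, \eqref{eq:q-expansion} with $B_\varphi=\tfrac12\mathcal D_\Omega(\psi\varphi)$ is exactly the expansion in Proposition~\ref{prop:large}, by linearity $\mathcal D_\Omega(\psi\varphi/2)=\tfrac12\mathcal D_\Omega(\psi\varphi)$.

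The delicate point is the cancellation of the $d^{-1/2}$ singularities in the bracket, which is what puts $R$ in the energy space. It works only because $F$ and $X$ are both constant along normals, so that no extra $O(1)$ terms spoil membership in $V$.
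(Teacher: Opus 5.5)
Your proposal is correct and follows essentially the same route as the paper. You obtain $q=w-X\cdot\nabla u$ by changing variables and integrating by parts, and get $\tfrac12$-harmonicity by testing against $(-\Delta)^{1/2}\chi$. You then show that $q-Q_{\psi\varphi/2}=w-(X\cdot\nabla u+\zeta_f)-\omega_f$ lies in $V$ through the cancellation of the $d^{-1/2}$ singularities, and conclude by the energy uniqueness argument; your final deduction $w=\sqrt\delta\,W$ is exactly what the paper does next, in Lemma~\ref{lem:material-coefficient}.

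One small correction to your closing remark: membership of the bracket in $V$ only uses that the traces match on $\partial\Omega$, namely $F=\psi\varphi/2$ and $X\cdot\nabla\delta=-\varphi$ there. It does not need $F$ and $X$ to be constant along normals. That property is needed elsewhere, to read off $\mathcal D_\Omega f$ from $\omega_f$ and to compute the coefficient $\psi_w$.
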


\begin{proof}
Since \(|\nabla u|\le C\delta^{-1/2}\) near \(\partial\Omega\), the gradient
is integrable. Moreover, \(u\) vanishes on \(\partial\Omega\), so its zero
extension creates no jump and \(u=\sqrt\delta\,U\in W^{1,1}(\R^n)\).
 Let \(\chi\in
C_c^\infty(\R^n)\). Changing variables,
\[
 \int_{\R^n}u_t\chi=\int_\Omega v_t\,(\chi\circ\Phi_t)\,J_t ,
\]
and since \(v_t\in C^1_tV\), this is differentiable at \(t=0\) with
derivative
\[
 \langle q,\chi\rangle=\int_\Omega w\chi+\int_\Omega u\,X\cdot\nabla\chi
 +\int_\Omega u\chi\operatorname{div}X .
\]
Since \(uX\chi\in W^{1,1}(\R^n)\) has compact support,
\(\int_{\R^n}\operatorname{div}(uX\chi)=0\), and the product rule turns the
last two terms into \(-\int\chi\,X\cdot\nabla u\). Hence
\[q=w-X\cdot\nabla u\in L^1(\R^n).\]

Let now \(\chi\in
C_c^\infty(\Omega)\). For \(|t|\) small the support of \(\chi\) is contained
in \(\Omega_t\), so \(\int u_t(-\Delta)^{1/2}\chi=\int\chi\), which does not
depend on \(t\). All the functions \(u_t\), and hence \(q\), are supported in a
fixed compact set \(K\). Let \(\zeta\in C_c^\infty(\R^n)\) with \(\zeta=1\) near
\(K\). Then \(\int u_t\,\zeta(-\Delta)^{1/2}\chi=\int\chi\) for all small
\(|t|\), and the formula for \(\langle q,\cdot\rangle\) obtained above gives
\(\langle q,(-\Delta)^{1/2}\chi\rangle=\langle q,\zeta(-\Delta)^{1/2}\chi\rangle=0\),
that is, \(q\) is \(\tfrac12\)-harmonic in \(\Omega\) in the sense of distributions.

Let \(f=\psi\varphi/2\), and
let \(F\) and \(\zeta_f=\delta^{-1/2}F\) be as in Lemma~\ref{lem:lift}. From
\(u=\sqrt\delta\,U\),
\begin{equation}\label{eq:energy-cancellation}
 X\cdot\nabla u+\zeta_f
 =\delta^{-1/2}G+\sqrt\delta\,X\cdot\nabla U,
 \qquad G:=\tfrac12U\,X\cdot\nabla\delta+F .
\end{equation}
The function \(G\) is smooth up to \(\partial\Omega\). On \(\partial\Omega\)
we have \(U=\psi\), \(\nabla\delta=-\nu\), hence \(X\cdot\nabla
\delta=-\varphi\), and \(F=\psi\varphi/2\), so that
\[
 G|_{\partial\Omega}=-\tfrac12\psi\varphi+\tfrac12\psi\varphi=0 .
\]
This is the point of the choice \(f=\psi\varphi/2\): neither
\(X\cdot\nabla u\) nor \(\zeta_f\) belongs to \(V\), but their \(d^{-1/2}\)
singularities cancel. As in the proof of uniqueness in
Proposition~\ref{prop:large}, \(G=\delta A\) with \(A\in
C^\infty(\overline\Omega)\), and
\begin{equation}\label{Xzetaf}
 X\cdot\nabla u+\zeta_f=\sqrt\delta\,\big(A+X\cdot\nabla U\big)\in V
\end{equation}
by Lemma~\ref{lem:energy-membership}.

By Proposition~\ref{prop:large},
\(Q_f=\zeta_f+\omega_f\) with \(\omega_f\in V\). Hence
\begin{equation}\label{eq:compare}
 q-Q_f=w-\big(X\cdot\nabla u+\zeta_f\big)-\omega_f\ \in V ,
\end{equation}
each of the three terms being in \(V\), by Proposition~\ref{prop:bulk},
\eqref{Xzetaf} and Proposition~\ref{prop:large}. Both \(q\) and \(Q_f\) are \(\tfrac12\)-harmonic in
\(\Omega\), by the argument above and \eqref{eq:large}, so \(a(q-Q_f,\phi)=0\) for
\(\phi\in C_c^\infty(\Omega)\), and by density for every \(\phi\in V\).
Taking \(\phi=q-Q_f\) gives \(q=Q_f\). The expansion \eqref{eq:q-expansion}
follows from Proposition~\ref{prop:large} and the linearity of
\(\mathcal D_\Omega\).
\end{proof}

\subsection{The material derivative}

\begin{lemma}\label{lem:material-coefficient}
We have \(w=\sqrt\delta\,W\) with \(W\in C^\infty(\overline\Omega)\), and
\(f_w:=(-\Delta)^{1/2}w\in C^\infty(\overline\Omega)\).
Moreover \(\psi_w:=W|_{\partial\Omega}\) is given by
\begin{equation}\label{eq:material-coefficient}
 \psi_w=B_\varphi-\tfrac32\kappa\varphi.
\end{equation}
\end{lemma}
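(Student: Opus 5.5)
The plan is to read everything off the identity \(w=q+X\cdot\nabla u\) from Proposition~\ref{prop:q}, using that both terms on the right have explicit boundary expansions. I will not differentiate any boundary coefficient in \(t\).

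\emph{Structure of \(w\).} Proposition~\ref{prop:q} gives \(q=Q_f=\zeta_f+\omega_f\) with \(f=\psi\varphi/2\). By \eqref{eq:energy-cancellation} and \eqref{Xzetaf},
\[
 w=q+X\cdot\nabla u=\omega_f+\sqrt\delta\,(A+X\cdot\nabla U).
\]
By Proposition~\ref{prop:large}, \(\omega_f=\sqrt\delta\,h_f\) with \(h_f\in C^\infty(\overline\Omega)\). The construction of \(A\) from \(G=\delta A\), with \(G\) smooth and vanishing on \(\partial\Omega\), gives \(A\in C^\infty(\overline\Omega)\). Since \(X\cdot\nabla U\) is also smooth, we get \(w=\sqrt\delta\,W\) with
\[
 W=h_f+A+X\cdot\nabla U\in C^\infty(\overline\Omega).
\]

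\emph{Regularity of \(f_w\).} The second line of \eqref{eq:regularity} applied to \(\sqrt\delta\,W\) gives \((-\Delta)^{1/2}w\in C^\infty(\overline\Omega)\) directly. As a consistency check, \((-\Delta)^{1/2}\omega_f=-g_f\) in \(\Omega\), which is smooth by Lemma~\ref{lem:lift}.

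\emph{Boundary value.} This is a local computation along normals. Fix \(x\in\partial\Omega\) and small \(r>0\), and evaluate at \(y=x-r\nu(x)\). There \(X(y)=\varphi(x)\nu(x)\) by \eqref{eq:normal-extension}, so
\[
 X\cdot\nabla u(y)=\varphi(x)\,\nu\cdot\nabla u(y)=-\varphi(x)\,\partial_r\big[u(x-r\nu(x))\big].
\]
By \eqref{eq:u-expansion} this equals \(-\varphi\big(\tfrac12\psi r^{-1/2}+\tfrac32\kappa r^{1/2}+O(r^{3/2})\big)\).

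This expansion may be differentiated because \(u(x-r\nu)=\sqrt r\,U(x-r\nu)\) with \(U\) smooth. Hence \(\partial_r\) of it is \(\tfrac12 r^{-1/2}U+\sqrt r\,\partial_rU\), and \(\partial_rU=\kappa+O(r)\) by \eqref{eq:coefficients}. On the other side, \eqref{eq:q-expansion} gives \(q(y)=\tfrac{\psi\varphi}{2}r^{-1/2}+B_\varphi\sqrt r+O(r^{3/2})\).

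Adding the two expansions, the \(r^{-1/2}\) terms cancel, and
\[
 w(y)=\big(B_\varphi-\tfrac32\kappa\varphi\big)\sqrt r+O(r^{3/2}).
\]
Since \(w(y)=\sqrt r\,W(y)\) for \(r<r_0\), dividing by \(\sqrt r\) and letting \(r\to0\) yields \(W(x)=B_\varphi-\tfrac32\kappa\varphi\), which is \eqref{eq:material-coefficient}.

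\emph{Main point of care.} The only delicate step is justifying the expansion of \(\nu\cdot\nabla u\) uniformly in \(x\). This is handled by the product structure \(u=\sqrt\delta\,U\) with \(U\in C^\infty(\overline\Omega)\) and \(\delta=r\) along normals. We also need that \(X\) is exactly \(\varphi\nu\), constant along normals near \(\partial\Omega\), so that no tangential derivatives of \(u\) enter.
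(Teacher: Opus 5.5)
Your proposal is correct and follows the paper's proof essentially step for step. You use the same decomposition $w=\omega_f+\sqrt\delta\,(A+X\cdot\nabla U)$ for the structure of $w$, the second line of \eqref{eq:regularity} for $f_w$, and the same addition of the normal-ray expansions of $q$ and $X\cdot\nabla u$ for the boundary value. Writing $X\cdot\nabla u=-\varphi\,\partial_r[u(x-r\nu)]$ via the exact identity $X(x-r\nu(x))=\varphi(x)\nu(x)$ is only a slightly cleaner packaging of the paper's split into $-\frac{U}{2\sqrt r}X\cdot\nu+\sqrt r\,X\cdot\nabla U$.
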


Proposition~\ref{prop:bulk} only gives \(w\in V\), which does not imply that
\(w/\sqrt d\) has a boundary trace. The lemma provides the structure of \(w\)
needed to apply the Pohozaev identity of Lemma~\ref{lem:bilinear-pohozaev} to
the pair \((u,w)\).

\begin{proof}
Let \(f=\psi\varphi/2\) so that  by Proposition~\ref{prop:large},
\(\omega_f=\sqrt\delta\,h_f\) with \(h_f\in C^\infty(\overline\Omega)\).
By \eqref{eq:q},
\[
 w=q+X\cdot\nabla u=\big(X\cdot\nabla u+\zeta_f\big)+\omega_f
 \stackrel{\eqref{Xzetaf}}{=}\sqrt\delta\,\big(A+X\cdot\nabla U+h_f\big),
\]
so \(w=\sqrt\delta\,W\) with \(W\in C^\infty(\overline\Omega)\). The second
line of \eqref{eq:regularity} then gives \(f_w\in C^\infty(\overline\Omega)\).

To compute \(\psi_w\), we expand \(w=q+X\cdot\nabla u\) along the normal ray
\(x-r\nu(x)\), \(x\in\partial\Omega\). There \(\nabla\delta=-\nu\) and
\(U(x-r\nu)=\psi+\kappa r+O(r^2)\), so
\[
 X\cdot\nabla u=-\frac{U}{2\sqrt r}\,X\cdot\nu+\sqrt r\,X\cdot\nabla U .
\]
Recalling that $\nu\cdot DX\nu=0$ and thus \(X(x-r\nu)\cdot\nu=\varphi+O(r^2)\), the first term
equals
\(-\frac{\psi\varphi}{2\sqrt r}-\frac{1}2\sqrt r \kappa\varphi+O(r^{3/2})\). On \(\partial\Omega\),
\(\nabla U=\nabla_{\partial\Omega}\psi-\kappa\nu\), hence \(X\cdot\nabla
U=-\kappa\varphi\), and the second term
equals \(-\sqrt r\kappa\varphi
+O(r^{3/2})\). Adding \eqref{eq:q-expansion}, the terms of order
\(r^{-1/2}\) cancel and
\[
 w(x-r\nu(x))=\sqrt r\,\Big[B_\varphi-\tfrac32\kappa\varphi
 \Big]
 +O(r^{3/2}),
\]
which gives \eqref{eq:material-coefficient}. 
\end{proof}

\subsection{The bilinear Pohozaev identity}
\label{ssec:pohozaev}

By Proposition~\ref{prop:bulk}, \(\frac{d^2}{dt^2}T(\Omega_t)|_{t=0}
=\mathcal G''(0)+2a(w,w)\). We now use a Pohozaev identity to write
\(a(w,w)\) as a volume integral plus a boundary integral. We recall the following fractional Pohozaev identity from \cite[Theorem~1.3, (1.10)]{DFWPohozaev}.

\begin{lemma}\label{lem:bilinear-pohozaev}
For \(i=1,2\) let \(v_i=\sqrt\delta\,V_i\) with \(V_i\in
C^\infty(\overline\Omega)\), and set
\(f_i=(-\Delta)^{1/2}v_i\) and
\(\psi_i=V_i|_{\partial\Omega}\). For every Lipschitz vector field \(X\) on
\(\R^n\), with \(\varphi=X\cdot\nu\),
\begin{equation}\label{eq:bilinear-pohozaev}
 a'_0[X](v_1,v_2)=-\int_\Omega
 \big\{f_1\,X\cdot\nabla v_2+f_2\,X\cdot\nabla v_1\big\}\,dy
 -c_*\int_{\partial\Omega}\psi_1\psi_2\,\varphi\dHaus .
\end{equation}
\end{lemma}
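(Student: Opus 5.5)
Since the statement is quoted from \cite[Theorem~1.3, (1.10)]{DFWPohozaev}, my plan is to reduce it to the quadratic fractional Pohozaev identity and to sketch how I would prove that identity. \emph{Step 1: polarization.} Both sides of \eqref{eq:bilinear-pohozaev} are symmetric bilinear in \((v_1,v_2)\). The maps \(v\mapsto f=(-\Delta)^{1/2}v\) and \(v\mapsto\psi=V|_{\partial\Omega}\) are linear, and the class \(\{\sqrt\delta\,V:V\in C^\infty(\overline\Omega)\}\) is a vector space. Applying the case \(v_1=v_2=v\) to \(v_1+v_2\) and \(v_1-v_2\) and subtracting therefore gives the bilinear identity. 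It remains to prove
\[
 a'_0[X](v,v)=-2\int_\Omega f\,X\cdot\nabla v\,dy-c_*\int_{\partial\Omega}\psi^2\varphi\dHaus .
\]

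\emph{Step 2: the kernel formula.} Differentiating \(\rho_t\) from \eqref{eq:rho} at \(t=0\), which is legitimate by \eqref{eq:rho-C2}, gives
\[
 a'_0(v,v)=\frac{c_n}{2}\iint\frac{(v(y)-v(z))^2}{|y-z|^{n+1}}\,k_X(y,z)\,dy\,dz,
\]
\[
 k_X(y,z)=\operatorname{div}X(y)+\operatorname{div}X(z)-(n+1)\frac{(X(y)-X(z))\cdot(y-z)}{|y-z|^2}.
\]
Formally, \(a'_0(v,v)=\frac{d}{dt}a(v\circ\Phi_t^{-1},v\circ\Phi_t^{-1})=-2a(v,X\cdot\nabla v)=-2\int fX\cdot\nabla v\). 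This fails only because \(X\cdot\nabla v\sim d^{-1/2}\) lies outside \(V\), and the boundary term measures exactly this defect.

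\emph{Step 3: truncation and passage to the limit.} I will regularize by replacing \(X\) with \(X_\eps=\eta_\eps X\), where \(\eta_\eps\) is a cutoff that vanishes on \(\{\delta<\eps\}\) and equals \(1\) on \(\{\delta>2\eps\}\) inside \(\Omega\) (one can take \(\eta_\eps=\eta(\delta/\eps)\) near \(\partial\Omega\)), and \(\eta_\eps=1\) outside \(\Omega\). For \(X_\eps\), the field is smooth where \(v\) is nonsmooth only away from \(\partial\Omega\), so \(X_\eps\cdot\nabla v\in V\) and the formal computation is rigorous: \(a'_0[X_\eps](v,v)=-2\int_\Omega f\,X_\eps\cdot\nabla v\). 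As \(\eps\to0\), the right-hand side converges to \(-2\int fX\cdot\nabla v\) by dominated convergence, since \(f\) is bounded and \(\nabla v\in L^1\). The left-hand side differs from \(a'_0[X](v,v)\) by the kernel term built on \((1-\eta_\eps)X\). This term is concentrated in the layer \(\{\delta<2\eps\}\), where \(\nabla\eta_\eps\sim\eps^{-1}\).

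\emph{Step 4: the boundary constant, the main obstacle.} I expect the main difficulty to be showing that this layer contribution converges to \(c_*\int_{\partial\Omega}\psi^2\varphi\). I will localize with a partition of unity, flatten the boundary, and rescale by \(\eps\). At a boundary point \(x\), \(v\) then blows up to \(\psi(x)\sqrt{\eps}\,(x_n)_+^{1/2}\), and the field \((1-\eta_\eps)X\) becomes \(\varphi(x)\nu\,(1-\eta(x_n))\) plus tangential parts. The same annulus estimates as in Lemma~\ref{lem:frozen-angular} (curvature errors of order \(\rho^2\), with \(v=\sqrt\delta V\) and \(V\in C^2\)) show that replacing \(v\) by the flat profile costs \(o(1)\) per unit boundary area. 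The limit is then \(\psi^2\varphi\) times a universal one-dimensional constant
\[
 \frac{c_n}{2}\iint\frac{(w(y)-w(z))^2}{|y-z|^{n+1}}k_{(1-\eta)e_n}\,dy\,dz,\qquad w=(y_n)_+^{1/2},
\]
integrated over unit tangential area. This constant is independent of \(\eta\), because the difference for two choices of \(\eta\) is \(a'\) along a compactly supported field, which vanishes by the formal identity using \((-\Delta)^{1/2}w=0\) in the half-space. Finally, I will identify the constant as \(\Gamma(3/2)^2=\pi/4\). For this I will check it on the explicit ball torsion function \(u_B=\kappa_n(1-|x|^2)_+^{1/2}\) with the dilation field \(X=x\), where everything can be computed by scaling. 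Alternatively, I can compute the one-dimensional integral directly through the Fourier transform of \((x_n)_+^{1/2}\).
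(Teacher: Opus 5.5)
The paper does not prove this lemma; it quotes it from \cite[Theorem~1.3, (1.10)]{DFWPohozaev}. Your plan is a sensible independent route: polarize, cut $X$ off near $\partial\Omega$ so that $a'_0[X](v,v)=-2a(v,X\cdot\nabla v)$ becomes legitimate, show that the leftover layer term concentrates on $\partial\Omega$ with a universal density, and calibrate that density on the ball. Steps~1--2 and the calibration are correct. For $X(y)=y$ one has $a'_0[X]=(n-1)a$ and $f=1$, so the identity on $B$ forces the constant to be $(n+1)T(B)/(\psi^2|\S^{n-1}|)=\pi/4$.

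Step~3, however, fails as written. With $\eta_\eps=1$ outside $\Omega$ and $\eta_\eps=0$ on $\{0<\delta<\eps\}$, the field $X_\eps$ jumps across $\partial\Omega$, and $\mathrm{Id}+tX_\eps$ is not a homeomorphism. The kernel expression for $a'_0[X_\eps](v,v)$ then diverges as soon as $\varphi\not\equiv0$. For $y\in\Omega$ with $\delta(y)=r<\eps$ and $z\notin\Omega$, the term $(n+1)v(y)^2\,X(z)\cdot(y-z)\,|y-z|^{-n-3}$ has a fixed sign near points where $\varphi\neq0$. Its $z$-integral is of order $|\varphi|\psi^2r^{-1}$, and $\int_0^\eps r^{-1}\,dr=\infty$.

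You need a two-sided cutoff, e.g.\ $\eta_\eps=\eta(|\mathsf d|/\eps)$ with $\mathsf d$ the signed distance. Then $X_\eps$ is Lipschitz, $X_\eps\cdot\nabla v$ is Lipschitz with compact support in $\Omega$, and Step~3 becomes correct. The layer field $(1-\eta_\eps)X$ then lives in $\{|\mathsf d|<2\eps\}$ on both sides of $\partial\Omega$, so the model field in Step~4 must also be continuous across $\{x_n=0\}$. Mind the sign as well: with your splitting, $a'_0[(1-\eta_\eps)X](v,v)$ must tend to $-c_*\int_{\partial\Omega}\psi^2\varphi\dHaus$.

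The genuine gap is Step~4, which is the entire content of the lemma and is only announced. Your model quantity is not absolutely convergent. With $g=1-\eta$ and $w=(y_n)_+^{1/2}$, take $y$ with $g'(y_n)\neq0$; then $\int_{\R^n}(w(y)-w(z))^2|y-z|^{-n-1}\,dz=\infty$, because $w(z)^2\sim z_n^+$. Your ``universal constant'' exists only after the cancellation $\int g'=0$ across the layer is used.

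The same issue appears in the curved problem. The scales $\eps\ll|y-z|\ll1$ contribute terms of order $\pm\log(1/\eps)$ from the inner and outer halves of the layer, where $\operatorname{div}((1-\eta_\eps)X)=O(\eps^{-1})$ has opposite signs. You must prove that these cancel up to $o(1)$ after $v$ is replaced by $\psi(x)\sqrt{(\mathsf l)_+}$. The errors from curvature and from the tangential variation of $V$, $\varphi$ and $X$ must also be shown to be summable over dyadic scales.

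Lemma~\ref{lem:frozen-angular} does not give this. Its mechanism is the exact identity $\mathscr P_b((\mathsf l)_+^{1/2})(y)=0$ for a linear operator evaluated at a single interior point. Here you need the asymptotics of a bilinear double integral whose weight concentrates on a shrinking layer. This estimate is where the actual work lies, and it is what the paper avoids by quoting \cite{DFWPohozaev}. Without it your argument is a plausible strategy rather than a proof.
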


 For \(X(y)=y\) and \(v_1=v_2\), \eqref{eq:bilinear-pohozaev}
reduces to the fractional Pohozaev identity of Ros-Oton and
Serra~\cite{RS}, since then \(a'_0[X]=(n-1)\,a\).

\begin{remark}\label{rem:first-variation}
The case \(v_1=v_2=u\) of \eqref{eq:bilinear-pohozaev} gives a proof of the
first variation formula \eqref{eq:hadamard}. Indeed, \eqref{eq:bulk-exact}
shows that \(\frac{d}{dt}T(\Omega_t)|_{t=0}=\mathcal
G'(0)=2\ell'_0(u)-a'_0(u,u)\). Since \(u\in W^{1,1}(\R^n)\) vanishes on
\(\partial\Omega\), \(\ell'_0(u)=\int_\Omega u\operatorname{div}X
=-\int_\Omega X\cdot\nabla u\), while \eqref{eq:bilinear-pohozaev} with
\(f_1=f_2=1\) gives \(a'_0(u,u)=-2\int_\Omega X\cdot\nabla
u-c_*\int_{\partial\Omega}\psi^2\varphi\). The volume integrals cancel, and
\eqref{eq:hadamard} follows.
\end{remark}

\begin{corollary}\label{cor:poho-energy}
With the notation previously introduced we have
\begin{equation}\label{eq:poho-energy}
 a(w,w)=\int_\Omega f_w\,X\cdot\nabla u\,dy
        +c_*\int_{\partial\Omega}\psi\psi_w\varphi\dHaus .
\end{equation}
\end{corollary}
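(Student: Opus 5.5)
We obtain \eqref{eq:poho-energy} by combining the weak equation \eqref{eq:w} for \(w\) with the bilinear Pohozaev identity of Lemma~\ref{lem:bilinear-pohozaev}, applied to the pair \((v_1,v_2)=(u,w)\).

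\emph{Step 1 (hypotheses of the Pohozaev identity).} We have \(u=\sqrt\delta\,U\) with \(U\in C^\infty(\overline\Omega)\), \(f_1=(-\Delta)^{1/2}u=1\) in \(\Omega\), and \(\psi_1=\psi\). Lemma~\ref{lem:material-coefficient} gives \(w=\sqrt\delta\,W\) with \(W\in C^\infty(\overline\Omega)\), \(f_2=f_w\in C^\infty(\overline\Omega)\), and \(\psi_2=\psi_w\). The field \(X\) of \eqref{eq:normal-extension} is smooth with compact support, hence Lipschitz. Lemma~\ref{lem:bilinear-pohozaev} then yields
\[
 a'_0(u,w)=-\int_\Omega\big\{X\cdot\nabla w+f_w\,X\cdot\nabla u\big\}\,dy-c_*\int_{\partial\Omega}\psi\psi_w\varphi\dHaus .
\]

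\emph{Step 2 (the equation for \(w\)).} Testing \eqref{eq:w} with \(v=w\in V\) gives \(a(w,w)=\ell'_0(w)-a'_0(u,w)\); here \(a'_0\) is symmetric, since \(a_t\) is. Differentiating \(\ell_t(w)=\int_\Omega wJ_t\) gives \(\ell'_0(w)=\int_\Omega w\operatorname{div}X\). We claim this equals \(-\int_\Omega X\cdot\nabla w\), by integration by parts without boundary term. The justification is the same as for \(u\) in Proposition~\ref{prop:q}. Since \(w=\sqrt\delta\,W\) with \(W\) smooth, we have \(|\nabla w|\le C\delta^{-1/2}\), which is integrable. Moreover \(w\) vanishes on \(\partial\Omega\), so its zero extension lies in \(W^{1,1}(\R^n)\) and \(\int_{\R^n}\operatorname{div}(wX)=0\).

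\emph{Step 3 (conclusion).} Substituting Step 1 into \(a(w,w)=\ell'_0(w)-a'_0(u,w)\), the two volume terms \(\mp\int_\Omega X\cdot\nabla w\) cancel. What remains is
\[
 a(w,w)=\int_\Omega f_w\,X\cdot\nabla u\,dy+c_*\int_{\partial\Omega}\psi\psi_w\varphi\dHaus ,
\]
which is \eqref{eq:poho-energy}. The integral \(\int_\Omega f_w\,X\cdot\nabla u\) converges because \(f_w\) is bounded and \(X\cdot\nabla u=O(d^{-1/2})\).

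The only delicate point is Step 1: the identity requires the precise structure \(w=\sqrt\delta\,W\) with \(W\) smooth up to the boundary and \(f_w\) smooth. Membership \(w\in V\) alone would not give this. This structure is supplied by Lemma~\ref{lem:material-coefficient}, so the argument reduces to bookkeeping once that lemma is available.
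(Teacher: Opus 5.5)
Your proposal is correct and follows the paper's proof essentially verbatim. You apply the bilinear Pohozaev identity to the pair \((u,w)\), using the structure \(w=\sqrt\delta\,W\) from Lemma~\ref{lem:material-coefficient}, and rewrite \(\ell'_0(w)=-\int_\Omega X\cdot\nabla w\) via the \(W^{1,1}\) zero extension. Substituting both into \eqref{eq:w} tested with \(v=w\), the two \(X\cdot\nabla w\) integrals cancel and \eqref{eq:poho-energy} follows.
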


\begin{proof}
By Lemma~\ref{lem:material-coefficient}, \(w=\sqrt\delta\,W\) with \(W\)
smooth and \(f_w\) smooth, so Lemma~\ref{lem:bilinear-pohozaev} applies to
\(v_1=u\), \(v_2=w\) and gives
\[
 a'_0(u,w)=-\int_\Omega X\cdot\nabla w
 -\int_\Omega f_w\,X\cdot\nabla u
 -c_*\int_{\partial\Omega}\psi\psi_w\varphi .
\]
The zero extension of \(w\) belongs to \(W^{1,1}(\R^n)\) and vanishes on
\(\partial\Omega\), so \(\ell'_0(w)=\int_\Omega w\operatorname{div}X=
-\int_\Omega X\cdot\nabla w\). Inserting both identities in \eqref{eq:w}
with \(v=w\), that is \(a(w,w)=\ell'_0(w)-a'_0(u,w)\), the two integrals of
\(X\cdot\nabla w\) cancel and \eqref{eq:poho-energy} follows.
\end{proof}

From \eqref{eq:poho-energy} we see that  \(2a(w,w)\) contains the volume integral \(2\int_\Omega
f_wX\cdot\nabla u\). We show in the next subsection that \(\mathcal G''(0)\)
contains the same integral with the opposite sign.

\subsection{The transported forcing}
\label{ssec:forcing}

 Let
\[
 \tilde u_t=u\circ\Phi_t^{-1},\qquad
 \tilde f_t=(-\Delta)^{1/2}\tilde u_t.
\]
The function \(\tilde u_t\) is the torsion function of \(\Omega\) transported
to \(\Omega_t\) by \(\Phi_t\); it is not the torsion function of
\(\Omega_t\). By \eqref{eq:defining-quotient},
\begin{equation}\label{eq:transported-amplitude}
 \tilde u_t=\sqrt{\delta\circ\Phi_t^{-1}}\,(U\circ\Phi_t^{-1})
 =\sqrt{\delta_t}\,A_t,\qquad
 A_t:=\sqrt{\gamma_t}\,(U\circ\Phi_t^{-1})\in C^\infty(\overline{\Omega_t}),
\end{equation}
and the second line of
\eqref{eq:regularity}, applied on the domain \(\Omega_t\), gives \(\tilde
f_t\in C^\infty(\overline{\Omega_t})\) for each \(t\). Pulling back by
\(\Phi_t\), we get \(\tilde f_t\circ\Phi_t=L_tu\) in \(\Omega\), where
\begin{equation}\label{eq:Lt}
 L_tv(y)=c_n\,\mathrm{P.V.}\!\!\int_{\R^n}\frac{v(y)-v(z)}{|y-z|^{n+1}}\,
 \varrho_t(y,z)\,dz,
 \qquad
 \varrho_t(y,z)=J_t(z)\left(\frac{|y-z|}{|\Phi_ty-\Phi_tz|}\right)^{n+1}.
\end{equation}
Here the principal value is taken over the balls \(B_\eps(y)\), whereas the
change of variables produces the excised sets
\(S_\eps=\{z:|\Phi_tz-\Phi_ty|<\eps\}\). The two limits coincide for
\(v\) smooth near the fixed point \(y\in\Omega\), as is the case for \(u\).
Indeed, writing \(h=z-y\), the leading singular term of the integrand is
\(-J_t(y)\nabla v(y)\cdot h/|D\Phi_t(y)h|^{n+1}\), which is odd; the
remainder is \(O(|h|^{1-n})\) and is locally integrable. The odd term
integrates to zero between a centred ball and the centred ellipsoid
\(\{|D\Phi_t(y)h|<\eps\}\). By Taylor's formula, replacing this ellipsoid
by \(S_\eps-y\) changes its radial boundary by \(O(\eps^2)\), producing
an error \(O(\eps)\).

In \eqref{eq:Lt}, the domain and the function are fixed, and only the
kernel depends on \(t\): \((L_t)\) is a smooth family of operators of order
one. Moreover \(L_0=(-\Delta)^{1/2}\), so that \(L_0u=1\) in \(\Omega\), and
\(\rho_t(y,z)=J_t(y)\varrho_t(y,z)\), where \(\rho_t\) is the multiplier of
\eqref{eq:rho}.

\begin{lemma}\label{lem:G-prime}
Let \(X_t=\dot\Phi_t\circ\Phi_t^{-1}\),
\(\hat X_t=(D\Phi_t)^{-1}\dot\Phi_t\), let \(\nu_t\) be the outer normal to
\(\Omega_t\), and let
\begin{equation}\label{eq:Bt-def}
 \mathcal B(t)=c_*\int_{\partial\Omega_t}\tilde\psi_t^{\,2}\,(X_t\cdot\nu_t)\dHaus,
 \qquad
 \tilde\psi_t=\big(\tilde u_t/\sqrt{\delta_t}\big)\big|_{\partial\Omega_t}.
\end{equation}
Then, for all small \(t\),
\begin{equation}\label{eq:poho-Gprime}
 \mathcal G'(t)=2\int_\Omega\big(L_tu-1\big)\,\hat X_t\cdot\nabla u\;J_t\,dy+\mathcal B(t).
\end{equation}
\end{lemma}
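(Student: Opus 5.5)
The plan is to compute \(\mathcal G'(t)\) at an arbitrary small \(t\) by treating \(\Omega_t\) as a new reference domain. There the first derivative is a first variation at time zero, to which Lemma~\ref{lem:bilinear-pohozaev} applies directly.

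\emph{Step 1: rewriting \(\mathcal G\) on \(\Omega_t\).} By the change of variables defining \eqref{eq:pullback}, \(a_t(u,u)=a(\tilde u_t,\tilde u_t)\) and \(\ell_t(u)=\int_{\Omega_t}\tilde u_t\). Fix \(t\) and set \(\Psi_s=\Phi_{t+s}\circ\Phi_t^{-1}\). This is a smooth family of diffeomorphisms with \(\Psi_0=\mathrm{Id}\), velocity \(\partial_s\Psi_s|_{s=0}=X_t\), and \(\tilde u_{t+s}=\tilde u_t\circ\Psi_s^{-1}\). Hence
\[
\mathcal G(t+s)=2\int_{\Omega_t}\tilde u_t\,\det D\Psi_s-a^{\Omega_t}_s(\tilde u_t,\tilde u_t),
\]
where \(a^{\Omega_t}_s\) is the pulled-back form \eqref{eq:pullback} built on \(\Omega_t\) with \(\Psi_s\) in place of \(\Phi_s\).

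The proof of Proposition~\ref{prop:bulk} shows that \(s\mapsto a^{\Omega_t}_s\) is differentiable in \(\mathcal L(V_t,V_t^*)\); it only uses that \(s\mapsto\Psi_s-\mathrm{Id}\) is \(C^1\) into \(W^{1,\infty}\), and this holds with bounds uniform in \(t\). Therefore
\[
\mathcal G'(t)=2\int_{\Omega_t}\tilde u_t\operatorname{div}X_t-a'_0[X_t](\tilde u_t,\tilde u_t),
\]
where \(a'_0[X_t]\) denotes the first variation computed on \(\Omega_t\).

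\emph{Step 2: applying the Pohozaev identity on \(\Omega_t\).} By \eqref{eq:transported-amplitude}, \(\tilde u_t=\sqrt{\delta_t}\,A_t\) with \(A_t\in C^\infty(\overline{\Omega_t})\), and \(\tilde f_t\in C^\infty(\overline{\Omega_t})\). The boundary value of \(A_t\) is exactly \(\tilde\psi_t\). So Lemma~\ref{lem:bilinear-pohozaev} on \(\Omega_t\), with \(v_1=v_2=\tilde u_t\) and the Lipschitz field \(X_t\), gives
\[
a'_0[X_t](\tilde u_t,\tilde u_t)=-2\int_{\Omega_t}\tilde f_t\,X_t\cdot\nabla\tilde u_t-\mathcal B(t).
\]
As in Remark~\ref{rem:first-variation}, the zero extension of \(\tilde u_t\) lies in \(W^{1,1}(\R^n)\) and vanishes on \(\partial\Omega_t\). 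Hence \(\int_{\Omega_t}\tilde u_t\operatorname{div}X_t=-\int_{\Omega_t}X_t\cdot\nabla\tilde u_t\). Combining,
\[
\mathcal G'(t)=2\int_{\Omega_t}(\tilde f_t-1)\,X_t\cdot\nabla\tilde u_t+\mathcal B(t).
\]
The integral converges absolutely, since \(\tilde f_t\) is bounded and \(|\nabla\tilde u_t|\lesssim\delta_t^{-1/2}\).

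\emph{Step 3: pulling back to \(\Omega\).} Substitute \(x=\Phi_t(y)\). Then \(dx=J_t\,dy\) and \(\tilde f_t\circ\Phi_t=L_tu\) by \eqref{eq:Lt}. Moreover \(\nabla\tilde u_t\circ\Phi_t=D\Phi_t^{-T}\nabla u\) and \(X_t\circ\Phi_t=\dot\Phi_t\). It follows that
\[
(X_t\cdot\nabla\tilde u_t)\circ\Phi_t=\dot\Phi_t\cdot D\Phi_t^{-T}\nabla u=\hat X_t\cdot\nabla u .
\]
Substituting gives \eqref{eq:poho-Gprime}.

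The main point needing care is Step 1: identifying \(\mathcal G'(t)\) with the time-zero first variation on \(\Omega_t\). This requires checking that the \(s\)-derivative at \(s=0\) of \(a^{\Omega_t}_s(\tilde u_t,\tilde u_t)\) coincides with the form \(a'_0[X_t]\) appearing in Lemma~\ref{lem:bilinear-pohozaev}. I would verify this by differentiating the multiplier \(\rho\) for \(\Psi_s\) in \(L^\infty\), exactly as in \eqref{eq:rho-C2}. I would then confirm that Lemma~\ref{lem:bilinear-pohozaev} holds on each smooth \(\Omega_t\), with \(\delta_t\) in place of \(\delta\).
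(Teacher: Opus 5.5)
Your proposal is correct and follows the paper's proof essentially step by step. You reparametrize by \(\Psi_s=\Phi_{t+s}\circ\Phi_t^{-1}\) to identify \(\frac{d}{dt}a(\tilde u_t,\tilde u_t)\) with the first variation \(a'_0[X_t]\) on \(\Omega_t\), apply Lemma~\ref{lem:bilinear-pohozaev} there to \(\tilde u_t=\sqrt{\delta_t}\,A_t\), and pull back by \(\Phi_t\). The differences are cosmetic: you differentiate the linear term through \(\det D\Psi_s\) and then integrate by parts, where the paper uses \(\partial_t\tilde u_t=-X_t\cdot\nabla\tilde u_t\) directly, and your check that the derivative of the pulled-back multiplier depends only on the velocity \(X_t\) makes explicit a point the paper leaves implicit.
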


\begin{proof}
We have \(\mathcal G(t)=2\int_{\Omega_t}\tilde u_t-a(\tilde u_t,\tilde u_t)\).
Since \(\partial_t\tilde u_t=-X_t\cdot\nabla\tilde u_t\) and \(\tilde u_t\)
vanishes on \(\partial\Omega_t\),
\(\frac{d}{dt}\int_{\Omega_t}\tilde u_t=-\int_{\Omega_t}X_t\cdot\nabla\tilde
u_t\). For the second term, let \(\Psi_h=\Phi_{t+h}\circ\Phi_t^{-1}\), so
that \(\Psi_0=\mathrm{Id}\), \(\partial_h\Psi_h|_{h=0}=X_t\), and
\(a(\tilde u_{t+h},\tilde u_{t+h})=a(\tilde u_t\circ\Psi_h^{-1},\tilde
u_t\circ\Psi_h^{-1})\). Hence \(\frac{d}{dt}a(\tilde u_t,\tilde u_t)\) is the
first-order form \(a'_0[X_t]\), computed on the domain \(\Omega_t\), applied
to \((\tilde u_t,\tilde u_t)\). By Lemma~\ref{lem:bilinear-pohozaev} on
\(\Omega_t\), with \(v_1=v_2=\tilde u_t=\sqrt{\delta_t}\,A_t\) (see
\eqref{eq:transported-amplitude}) and regularised distance \(\delta_t\),
\[
 \frac{d}{dt}a(\tilde u_t,\tilde u_t)
 =-2\int_{\Omega_t}\tilde f_t\,X_t\cdot\nabla\tilde u_t-\mathcal B(t).
\]
 Combining the two derivatives,
\[
 \mathcal G'(t)=2\int_{\Omega_t}(\tilde f_t-1)\,X_t\cdot\nabla\tilde u_t
 +\mathcal B(t).
\]
Changing variables by \(\Phi_t\) and using \(\tilde f_t\circ\Phi_t=L_tu\)
and \((X_t\cdot\nabla\tilde u_t)\circ\Phi_t=\hat X_t\cdot\nabla u\), we
obtain \eqref{eq:poho-Gprime}.
\end{proof}

Since \(L_0u-1=0\), the derivatives of \(\hat X_t\) and \(J_t\) do not
contribute to \(\mathcal G''(0)\): we only need the limit of
\((L_tu-1)/t\), tested against \(X\cdot\nabla u\). The boundary term is
explicit.

\begin{lemma}\label{lem:Bprime}
We have \(\tilde\psi_t\circ\Phi_t=\sqrt{\beta_t}\,\psi\) on \(\partial\Omega\), and
\begin{equation}\label{eq:poho-Bprime}
 \mathcal B'(0)=c_*\int_{\partial\Omega}H\psi^2\varphi^2\dHaus.
\end{equation}
\end{lemma}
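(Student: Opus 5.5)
The first claim is a pointwise identity on \(\partial\Omega\), so I will prove it first and then differentiate. Fix \(x\in\partial\Omega\) and use \eqref{eq:transported-amplitude}: on \(\partial\Omega_t\) we have \(\tilde\psi_t=A_t|_{\partial\Omega_t}=\sqrt{\gamma_t}\,(U\circ\Phi_t^{-1})|_{\partial\Omega_t}\). Evaluating at \(\Phi_t(x)\) gives \(\tilde\psi_t(\Phi_t(x))=\sqrt{\gamma_t(\Phi_t(x))}\,U(x)\). By \eqref{eq:stretch} and \(U|_{\partial\Omega}=\psi\), this equals \(\sqrt{\beta_t(x)}\,\psi(x)\), which is the first claim.

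Next I pull \(\mathcal B(t)\) back to \(\partial\Omega\) through the parametrization \(x\mapsto\Phi_t(x)\), with tangential Jacobian \(J^\partial_t(x)\). This gives
\[
 \mathcal B(t)=c_*\int_{\partial\Omega}\beta_t\,\psi^2\,\big(X_t\cdot\nu_t\big)\circ\Phi_t\,J^\partial_t\dHaus .
\]
Here \(X_t\circ\Phi_t=\dot\Phi_t=X\) is independent of \(t\). Moreover \(\nu_t\circ\Phi_t=D\Phi_t^{-T}\nu/|D\Phi_t^{-T}\nu|\), so that
\[
 \big(X_t\cdot\nu_t\big)\circ\Phi_t=\frac{X\cdot D\Phi_t^{-T}\nu}{\beta_t}.
\]
The factor \(\beta_t\) cancels, and the integrand reduces to \(c_*\psi^2\,(D\Phi_t^{-1}X\cdot\nu)\,J^\partial_t\).

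I then differentiate at \(t=0\) using three facts:
\begin{itemize}
\item \(\frac{d}{dt}J^\partial_t|_{t=0}=\operatorname{div}_{\partial\Omega}X\);
\item \(\frac{d}{dt}D\Phi_t^{-1}|_{t=0}=-DX\);
\item \(D\Phi_0^{-1}X\cdot\nu=\varphi\).
\end{itemize}
With these,
\[
 \mathcal B'(0)=c_*\int_{\partial\Omega}\psi^2\big(\varphi\,\operatorname{div}_{\partial\Omega}X-DX\,X\cdot\nu\big)\dHaus .
\]
On \(\partial\Omega\) the field is \(X=\varphi\nu\), so \(\operatorname{div}_{\partial\Omega}X=\varphi H\), since \(\nu\) is normal and its tangential divergence is \(H\). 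Also \(DX\,X\cdot\nu=\varphi\,\nu\cdot DX\nu=0\) by \eqref{eq:stretch-derivative}, because \(X\) is constant along normals near \(\partial\Omega\). This yields \eqref{eq:poho-Bprime}.

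The main point needing care is justifying the differentiation. Since \(\Phi_t=\mathrm{Id}+tX\) is polynomial in \(t\), everything in the pulled-back integrand is smooth in \((t,x)\), so differentiating under the integral is harmless. The only subtle bookkeeping is the cancellation of \(\beta_t\) against the normalization of \(\nu_t\). An alternative check is to use \(\partial_t\beta_t|_{0}=0\) directly, which confirms that the \(\beta_t\) factor does not contribute at first order anyway.
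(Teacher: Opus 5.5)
Your proof is correct and follows the paper's route: the same identification $\tilde\psi_t(\Phi_t(x))=\sqrt{\gamma_t(\Phi_t(x))}\,U(x)=\sqrt{\beta_t(x)}\,\psi(x)$, followed by the same pull-back of $\mathcal B(t)$ to $\partial\Omega$ via $x\mapsto\Phi_t(x)$. The only difference is cosmetic. You use the explicit normal $\nu_t\circ\Phi_t=D\Phi_t^{-T}\nu/\beta_t$ to cancel $\beta_t$, and then invoke $DX\,\nu=0$. The paper instead keeps $\beta_t$ and uses $\partial_t\beta_t|_{t=0}=0$ together with $\nu\cdot\partial_t\nu_t(\Phi_t(x))|_{t=0}=0$, which follows from $|\nu_t|=1$.
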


\begin{proof}
By \eqref{eq:transported-amplitude} and \eqref{eq:stretch},
\(\tilde\psi_t(x_t)=A_t(x_t)=\sqrt{\gamma_t(x_t)}\,U(x)=\sqrt{\beta_t(x)}\,\psi(x)\).
On \(\partial\Omega\), \(\Phi_t(x)=x_t\) and \(X_t(x_t)=\dot\Phi_t(x)
=\varphi(x)\nu(x)\). Pulling back by \(x\mapsto x_t\),
\[
 \mathcal B(t)=c_*\int_{\partial\Omega}\psi^2\,\beta_t\,\varphi\,
 \nu\cdot\nu_t(x_t)\,J^{\partial}_t\dHaus,
\]
where \(J^\partial_t\) is the surface Jacobian of \(x\mapsto x_t\). By
\eqref{eq:stretch-derivative}, \(\beta_0=1\) and \(\partial_t\beta_t|_{t=0}=0\).
Since \(\nu_t(x_t)\) is a unit vector, \(\nu\cdot\partial_t\nu_t(x_t)|_{t=0}=0\),
and \(\partial_tJ^\partial_t|_{t=0}=\operatorname{div}_{\partial\Omega}
(\varphi\nu)=H\varphi\). This gives \eqref{eq:poho-Bprime}.
\end{proof}

We now identify the limit of \((L_tu-1)/t\). Recall from
Lemma~\ref{lem:material-coefficient} that
\(f_w=((-\Delta)^{1/2}w)|_\Omega\in C^\infty(\overline\Omega)\).

\begin{lemma}\label{lem:distributional-limit}
\((\tilde f_t-1)/t\to-f_w\) in \(\mathcal D'(\Omega)\) as \(t\to0\).
\end{lemma}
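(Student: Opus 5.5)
Fix \(\chi\in C_c^\infty(\Omega)\) and let \(K=\operatorname{supp}\chi\). Since \(\Phi_t=\mathrm{Id}+tX\) with \(X\) supported in \(\{d<r_0\}\), for \(|t|\) small the compact set \(K\) stays inside \(\Omega_t\), at positive distance from \(\partial\Omega_t\). Therefore \(\tilde f_t=(-\Delta)^{1/2}\tilde u_t\) on \(K\), and the pairing is
\[
\langle\tilde f_t,\chi\rangle=\int_{\R^n}\tilde u_t\,(-\Delta)^{1/2}\chi .
\]
This is legitimate because \(\tilde u_t\in V_t\subset H^{1/2}\) has compact support and \(\chi\) is smooth with support inside \(\Omega_t\), so the weak and pointwise definitions agree; \(\tilde u_t\) is smooth near \(K\). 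Likewise \(\int\chi=\int u\,(-\Delta)^{1/2}\chi\), since \(u\) is the torsion function of \(\Omega\). So the task is to compute the \(t\)-derivative at \(0\) of
\[
\int_{\R^n}(\tilde u_t-u)\,(-\Delta)^{1/2}\chi .
\]

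\textbf{Step 1: derivative of the transported function.} Since \(\tilde u_t=u\circ\Phi_t^{-1}\), we have \(\partial_t\tilde u_t|_{t=0}=-X\cdot\nabla u\) in \(L^1(\R^n)\). To justify this:
\begin{itemize}
\item Change variables:
\[
\int\tilde u_t\,g=\int_\Omega u\,(g\circ\Phi_t)\,J_t ,\qquad g=\zeta\,(-\Delta)^{1/2}\chi ,
\]
where \(\zeta\) is a cutoff equal to \(1\) near the common compact support of all the \(\tilde u_t\).
\item This is differentiable in \(t\), with derivative
\[
\int_\Omega u\,(X\cdot\nabla g+g\operatorname{div}X)=-\int_\Omega g\,X\cdot\nabla u .
\]
The last equality is the integration by parts already used in Proposition~\ref{prop:q}, which holds because \(u\in W^{1,1}(\R^n)\) vanishes on \(\partial\Omega\).
\end{itemize}
Hence
\[
\frac{\langle\tilde f_t-1,\chi\rangle}{t}\ \longrightarrow\ -\big\langle X\cdot\nabla u,(-\Delta)^{1/2}\chi\big\rangle .
\]

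\textbf{Step 2: identify the limit.} We use Proposition~\ref{prop:q}: \(X\cdot\nabla u=w-q\), and \(q=Q_{\psi\varphi/2}\) is \(\tfrac12\)-harmonic in \(\Omega\) in the distributional sense. Thus \(\langle q,(-\Delta)^{1/2}\chi\rangle=0\), and the limit equals
\[
-\big\langle w,(-\Delta)^{1/2}\chi\big\rangle .
\]
Since \(w\in V\), and by Lemma~\ref{lem:material-coefficient} \((-\Delta)^{1/2}w=f_w\) is smooth in \(\Omega\), we get \(\langle w,(-\Delta)^{1/2}\chi\rangle=a(w,\chi)=\int f_w\chi\). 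Therefore the limit is \(-\int f_w\chi\), which is the claim.

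\textbf{Main obstacle.} The delicate point is Step 1. Neither \(\tilde u_t\) nor its \(t\)-derivative is smooth, and \(X\cdot\nabla u\) blows up like \(d^{-1/2}\) at the boundary. This is why the argument avoids differentiating pointwise and works instead with the change of variables against the smooth test function \(g\). It also needs the uniformity of the supports, so that the single cutoff \(\zeta\) works for all small \(t\).
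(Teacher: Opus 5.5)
Your proof is correct and follows the paper's argument. You pair with \((-\Delta)^{1/2}\chi\), differentiate the transported function to get \(-X\cdot\nabla u\), and then use \(-X\cdot\nabla u=q-w\) with \(q\) \(\tfrac12\)-harmonic and \((-\Delta)^{1/2}w=f_w\) in \(\Omega\); the only cosmetic difference is that the paper quotes the \(L^1\) convergence \((\tilde u_t-u)/t\to-X\cdot\nabla u\) (from \(u\in W^{1,1}(\R^n)\)), whereas you reach the same limit by differentiating the change-of-variables identity against the smooth function \(g\), as in the proof of Proposition~\ref{prop:q}.
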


\begin{proof}
Since \(u\in W^{1,1}(\R^n)\) and \(\Phi_t=\mathrm{Id}+tX\) with \(X\) smooth and
compactly supported, \((\tilde u_t-u)/t\to-X\cdot\nabla u\) in \(L^1(\R^n)\).
Let \(\chi\in C_c^\infty(\Omega)\). For \(|t|\) small the support of \(\chi\)
is contained in \(\Omega_t\), so that
\(\int\tilde u_t(-\Delta)^{1/2}\chi=\int\tilde f_t\chi\) and \(\int
u(-\Delta)^{1/2}\chi=\int\chi\). Since \((-\Delta)^{1/2}\chi\) is bounded,
\[
 \int\frac{\tilde f_t-1}{t}\,\chi\longrightarrow
 -\int X\cdot\nabla u\,(-\Delta)^{1/2}\chi .
\]
By \eqref{eq:q}, \(-X\cdot\nabla u=q-w\). Since \(q\) is
\(\tfrac12\)-harmonic in \(\Omega\) (Proposition~\ref{prop:q}) and
\((-\Delta)^{1/2}w=f_w\) in \(\Omega\), the right-hand side equals
\(-\int f_w\chi\).
\end{proof}

This convergence is not enough to pass to the limit in
\eqref{eq:poho-Gprime}, because \(X\cdot\nabla u\) behaves like \(d^{-1/2}\):
it is integrable, but it is not a test function. We need the following
estimate.

\begin{lemma}\label{lem:lipschitz}
Let \(U\in C^\infty(\overline\Omega)\), \(u=\sqrt\delta\,U\), and let
\(L_t\) be as in \eqref{eq:Lt}. There are \(t_0>0\) and \(C\), depending
only on \(\Omega\), \(\delta\), \(\|U\|_{C^2(\overline\Omega)}\) and
\(\sup_{|t|\le t_0}|t|^{-1}\|\Phi_t-\mathrm{Id}\|_{C^2}\), such that
\begin{equation}\label{eq:lipschitz}
 \big\|L_tu-L_0u\big\|_{L^\infty(\Omega)}\le C|t|,\qquad |t|\le t_0 .
\end{equation}
\end{lemma}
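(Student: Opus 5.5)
The plan is to reduce the estimate to Lemma~\ref{lem:frozen-angular} by freezing the kernel of \(L_t-L_0\) at the evaluation point. Fix \(y\in\Omega\) and write
\[
 (L_t-L_0)u(y)=c_n\,\mathrm{P.V.}\!\int\frac{u(y)-u(z)}{|y-z|^{n+1}}\big(\varrho_t(y,z)-1\big)\,dz .
\]
With \(h=z-y\), Taylor expansion of \(\Phi_t=\mathrm{Id}+tX\) gives \(\Phi_t z-\Phi_t y=(\mathrm{Id}+tDX(y))h+tO(\|X\|_{C^2}|h|^2)\) and \(J_t(z)=J_t(y)+tO(|h|)\). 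Hence
\[
 \varrho_t(y,z)-1=\big(b_{t,y}(h/|h|)-1\big)+t\,m_t(y,z),\qquad |m_t(y,z)|\le C\min(|h|,1),
\]
where the frozen density is
\[
 b_{t,y}(\theta)=J_t(y)\,|(\mathrm{Id}+tDX(y))\theta|^{-n-1}.
\]
This density is even in \(\theta\) and satisfies \(\|b_{t,y}-1\|_{L^\infty(\S^{n-1})}\le C|t|\). For large \(|h|\) I will simply use that \(\varrho_t\) is bounded and \(\varrho_t-1=O(t)\) uniformly, which follows from \eqref{eq:rho-C2}. The constants involve only \(\sup|t|^{-1}\|\Phi_t-\mathrm{Id}\|_{C^2}\).

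The frozen part equals \(\mathscr P_{b_{t,y}-1}u(y)\), evaluated at the same point \(y\). The constant in Lemma~\ref{lem:frozen-angular} does not depend on \(b\) beyond \(\|b\|_\infty\), so that lemma bounds this term by \(C\|b_{t,y}-1\|_\infty\le C|t|\) uniformly in \(y\). I also need to check that the principal value in \eqref{eq:Lt} matches the symmetrized form \eqref{eq:Pb-symmetrized}. This holds because \(u\) is smooth near \(y\) and the frozen kernel is even.

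The remainder is
\[
 t\,c_n\int\frac{(u(y)-u(z))\,m_t(y,z)}{|y-z|^{n+1}}\,dz .
\]
Here the extra factor \(|h|\) lowers the singularity to \(|h|^{-n}\), so the integral converges absolutely and the principal value is not needed. I will bound \(\int|u(y)-u(y+h)|\,|h|^{-n}\,dh\) uniformly in \(y\). Since \(u=\sqrt\delta\,U\) is \(C^{1/2}\) on \(\R^n\), with seminorm controlled by \(\|U\|_{C^1}\) (take \(\delta\) smooth and use \(|\sqrt a-\sqrt b|\le\sqrt{|a-b|}\)), we have \(|u(y)-u(z)|\le C|h|^{1/2}\). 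This gives the bound \(\int_{|h|<1}|h|^{1/2-n}\,dh+\int_{|h|\ge1}\|u\|_\infty|h|^{-n-1}\,dh<\infty\), independent of \(y\).

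The main obstacle is the rigorous decomposition of the kernel, in particular the claim \(|m_t|\le C|h|\) uniformly in \(t\) and \(y\) near the diagonal. Away from the diagonal, say \(|h|\ge1\), I will instead bound the whole difference directly by \(C|t|\,|h|^{-n-1}\). I must also make sure the subtraction of the frozen kernel over all of \(\R^n\) is legitimate. Far away, \(b_{t,y}(\theta)-1\) times \(|h|^{-n-1}\) is integrable against the bounded \(u\), so this is fine. Adding the two bounds gives \eqref{eq:lipschitz} with a constant depending only on the stated quantities.
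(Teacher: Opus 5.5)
Your proposal is correct and is essentially the paper's proof. You freeze the kernel at the evaluation point and bound the even frozen part by Lemma~\ref{lem:frozen-angular}; your density \(b_{t,y}-1\) is exactly the paper's \(b_{t,y}\). You then control the error kernel, of size \(O(|t|\,|z-y|^{-n})\) near the diagonal and \(O(|t|\,|z-y|^{-n-1})\) far away, using the global \(C^{1/2}\) regularity of \(u\), which is what the paper does.
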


The difficulty is the boundary behaviour of \(u\): \(|\nabla u|\les
d^{-1/2}\) and \(|D^2u|\les d^{-3/2}\). If one splits the principal value in \eqref{eq:Lt} at
\(|y-z|=d(y)/2\), freezes the kernel at \(y\), and expands
\(u(y)-u(z)\) to second order, the linear term cancels by parity of the
frozen kernel, but the quadratic term only gives
\(|L_tu-L_0u|\les|t|\,d^{-1/2}\). The proof uses instead the exact
cancellation of Lemma~\ref{lem:frozen-angular}.

\begin{proof}
We freeze the moving kernel at the evaluation point. Let
\[
 K_t(y,z)=\frac{J_t(z)}{|\Phi_tz-\Phi_ty|^{n+1}},\qquad
 K_t^{\rm fr}(y,z)=\frac{J_t(y)}{|D\Phi_t(y)(z-y)|^{n+1}},\qquad
 E_t=K_t-K_t^{\rm fr},
\]
so that
 \begin{multline*}(L_t-L_0)u(y)=c_n\,\mathrm{P.V.}\!\int(u(y)-u(z))\frac{b_{t,y}((z-y)/|z-y|)}{|z-y|^{n+1}}\,dz\\+
       c_n\mathrm{P.V.}\!\int_{\R^n}(u(y)-u(z))E_t(y,z)\,dz ,\end{multline*}
 where 
\[
 b_{t,y}(\theta)=\frac{J_t(y)}{|D\Phi_t(y)\theta|^{n+1}}-1,
 \qquad\theta\in\S^{n-1},
\]
which is even and satisfies \(\sup_y\|b_{t,y}\|_\infty\le C|t|\). We estimate each of the terms separately. By Lemma~\ref{lem:frozen-angular}, applied at each fixed \(y\) with
\(b=b_{t,y}\), \[|\mathscr P_{b_{t,y}}u(y)|\le C\|b_{t,y}\|_\infty\le C|t|.\] 
We now turn to the term involving $E_t$.
 Since
\(\Phi_0=\mathrm{Id}\), we have \[\|D\Phi_t-\mathrm{Id}\|_\infty
+\|D^2\Phi_t\|_\infty+\|J_t-1\|_\infty+\|\nabla J_t\|_\infty\le C|t|.\]
Using \(|\Phi_tz-\Phi_ty-D\Phi_t(y)(z-y)|\le C|t||z-y|^2\),
\(|J_t(z)-J_t(y)|\le C|t||z-y|\), and the fact that both \(K_t\) and
\(K_t^{\rm fr}\) differ from \(|z-y|^{-n-1}\) by at most
\(|t||z-y|^{-n-1}\), we get
\begin{equation}\label{eq:lipschitz-kernel-error}
 |E_t(y,z)|\le C|t|
 \begin{cases}
 |z-y|^{-n},&0<|z-y|\le1,\\
 |z-y|^{-n-1},&|z-y|>1.
 \end{cases}
\end{equation}
The error kernel is one order less singular than \(|z-y|^{-n-1}\), and the
global H\"older continuity of \(u\) is enough to control it. Indeed, the zero
extension of \(\delta\) is Lipschitz, so the zero extension of \(u\) is
\(C^{1/2}\) on \(\R^n\), and
\[
 \sup_{y\in\Omega}\left|\int_{\R^n}(u(y)-u(z))E_t(y,z)\,dz\right|
 \le C|t|\left([u]_{C^{1/2}(\R^n)}\int_0^1\rho^{-1/2}\,d\rho
                 +\|u\|_\infty\int_1^\infty\rho^{-2}\,d\rho\right)
 \le C|t|.
\]
This proves \eqref{eq:lipschitz}.
\end{proof}

\begin{proposition}\label{prop:force-limit}
We have
\begin{equation}\label{eq:poho-geometric-half}
 \mathcal G''(0)=-2\int_\Omega f_w\,X\cdot\nabla u\,dy
 +c_*\int_{\partial\Omega}H\psi^2\varphi^2\dHaus .
\end{equation}
\end{proposition}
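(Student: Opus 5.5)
The plan is to differentiate the formula of Lemma~\ref{lem:G-prime} at \(t=0\) as a difference quotient. Since \(L_0u=1\) in \(\Omega\), we have \(\mathcal G'(0)=\mathcal B(0)\), and
\[
 \frac{\mathcal G'(t)-\mathcal G'(0)}{t}
 =2\int_\Omega\frac{L_tu-1}{t}\,\hat X_t\cdot\nabla u\,J_t\,dy+\frac{\mathcal B(t)-\mathcal B(0)}{t}.
\]
Lemma~\ref{lem:Bprime} gives the limit of the second term, \(c_*\int_{\partial\Omega}H\psi^2\varphi^2\). So everything reduces to showing
\[
 \int_\Omega\frac{L_tu-1}{t}\,\hat X_t\cdot\nabla u\,J_t\,dy\longrightarrow-\int_\Omega f_w\,X\cdot\nabla u\,dy .
\]
This also shows that \(\mathcal G'\) is differentiable at \(0\). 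By the \(C^2\) regularity already established in Proposition~\ref{prop:bulk}, this difference quotient computes \(\mathcal G''(0)\).

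For the volume term, set \(g_t=(L_tu-1)/t=(L_tu-L_0u)/t\) on \(\Omega\). Lemma~\ref{lem:lipschitz} gives \(\|g_t\|_{L^\infty(\Omega)}\le C\) uniformly for small \(t\). Also \(g_t=((\tilde f_t-1)/t)\circ\Phi_t\), so Lemma~\ref{lem:distributional-limit}, combined with the smooth convergence \(\Phi_t\to\mathrm{Id}\) on compact subsets of \(\Omega\), yields \(g_t\to-f_w\) in \(\mathcal D'(\Omega)\). Concretely, test against \(\chi\in C_c^\infty(\Omega)\) and change variables: the pushed-forward test functions \(\chi\circ\Phi_t^{-1}J_t^{-1}\) converge in \(C_c^\infty\) with supports in a fixed compact set. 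Together with the uniform bound, \(g_t\) then converges weakly-* in \(L^\infty(\Omega)\) to \(-f_w\), because \(C_c^\infty(\Omega)\) is dense in \(L^1(\Omega)\).

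Next, the weights \(m_t:=\hat X_t\cdot\nabla u\,J_t\) converge strongly in \(L^1(\Omega)\) to \(X\cdot\nabla u\). Indeed, \(\hat X_t\to X\) and \(J_t\to1\) uniformly, and \(|\nabla u|\lesssim\delta^{-1/2}\) is integrable. A weak-* convergent sequence paired with a strongly convergent \(L^1\) sequence converges, so \(\int g_t m_t\to-\int f_w X\cdot\nabla u\). This gives \eqref{eq:poho-geometric-half}.

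The main obstacle is the uniform \(L^\infty\) bound on \(g_t\), which is exactly what rules out losing mass near \(\partial\Omega\), where \(X\cdot\nabla u\) is unbounded. Lemma~\ref{lem:lipschitz} supplies this bound. The remaining checks are routine: the transport of the distributional limit through \(\Phi_t\), and the fact that \(\hat X_t\) is bounded and converges uniformly to \(X\), since \(\dot\Phi_t=X\).
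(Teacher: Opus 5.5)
Your proposal is correct and follows essentially the same route as the paper. You take the difference quotient of the formula in Lemma~\ref{lem:G-prime}, using \(\mathcal G'(0)=\mathcal B(0)\); you bound \(g_t\) uniformly in \(L^\infty\) via Lemma~\ref{lem:lipschitz}; you upgrade the distributional limit of Lemma~\ref{lem:distributional-limit} to weak-* convergence by density and pair it with the strong \(L^1\) convergence of \(\hat X_t\cdot\nabla u\,J_t\); and you take the boundary term from Lemma~\ref{lem:Bprime}. The only difference is cosmetic, in how the moving test functions are handled. The paper absorbs \(\chi_t-\chi\) through the \(L^\infty\) bound as an \(O(\|\chi_t-\chi\|_{L^1})\) error. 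Your appeal to \(C_c^\infty\) convergence of the pushed-forward test functions implicitly uses equicontinuity of the convergent distributions (Banach--Steinhaus), which is valid, but the same \(L^\infty\) bound gives it directly.
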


\begin{proof}
Let \(g_t=(L_tu-1)/t\). By Lemma~\ref{lem:lipschitz} and \(L_0u=1\),
\(\|g_t\|_{L^\infty(\Omega)}\le C\). Let \(\chi\in C_c^\infty(\Omega)\) and
\(\chi_t=(\chi\circ\Phi_t^{-1})\,|\det D\Phi_t^{-1}|\). Since
\(g_t=((\tilde f_t-1)/t)\circ\Phi_t\) and
\(\|(\tilde f_t-1)/t\|_{L^\infty(\Omega_t)}=\|g_t\|_{L^\infty(\Omega)}\),
\[
 \int_\Omega g_t\chi=\int\frac{\tilde f_t-1}{t}\,\chi_t
 =\int\frac{\tilde f_t-1}{t}\,\chi+O\big(\|\chi_t-\chi\|_{L^1}\big),
\]
and Lemma~\ref{lem:distributional-limit} gives \(\int g_t\chi\to-\int
f_w\chi\). Since \((g_t)\) is bounded in \(L^\infty(\Omega)\) and
\(C_c^\infty(\Omega)\) is dense in \(L^1(\Omega)\),
\[
 g_t\overset{*}{\rightharpoonup}-f_w\qquad\text{in }L^\infty(\Omega).
\]
On the other hand \(\hat X_t\to X\) and \(J_t\to1\) uniformly, and
\(|\nabla u|\le C\delta^{-1/2}\in L^1(\Omega)\), so \(\hat
X_t\cdot\nabla u\,J_t\to X\cdot\nabla u\) strongly in \(L^1(\Omega)\).
Hence
\begin{equation}\label{eq:formal-Lt-tested}
 \int_\Omega g_t\,\hat X_t\cdot\nabla u\,J_t
 \longrightarrow-\int_\Omega f_w\,X\cdot\nabla u .
\end{equation}
By Proposition~\ref{prop:bulk}, \(\mathcal G\) is of class \(C^2\), and by
Lemma~\ref{lem:G-prime}, \(\mathcal G'(0)=\mathcal B(0)\). Therefore
\[
 \mathcal G''(0)=\lim_{t\to0}\frac{\mathcal G'(t)-\mathcal G'(0)}{t}
 =2\lim_{t\to0}\int_\Omega g_t\,\hat X_t\cdot\nabla u\,J_t+\mathcal B'(0),
\]
so that  \eqref{eq:formal-Lt-tested} and \eqref{eq:poho-Bprime} give
\eqref{eq:poho-geometric-half}.
\end{proof}

\subsection{Conclusion of the proof}
\label{ssec:assembly}

\begin{proof}[Proof of Theorem~\ref{thm:hessian}]
 By
Proposition~\ref{prop:bulk}, \(t\mapsto T(\Omega_t)\) is of class
\(C^2\). Adding \eqref{eq:poho-geometric-half} and twice
\eqref{eq:poho-energy}, the volume integrals cancel, and
\begin{equation}\label{eq:diagonal}
 \frac{d^2}{dt^2}T(\Omega_t)\Big|_{t=0}
 =c_*\int_{\partial\Omega}\big\{H\psi^2\varphi^2+2\psi\varphi\,\psi_w\big\}\dHaus .
\end{equation}
For the normal extension, \(\psi_w=\frac12\mathcal D_\Omega(\psi\varphi)
-\frac32\kappa\varphi\) by Lemma~\ref{lem:material-coefficient} and
\eqref{eq:q-expansion}, which gives \eqref{eq:hessian}.

\end{proof}

\section{The case of the ball}
\label{sec:ball}

In this section we prove Theorem~\ref{thm:ball-coercivity}. We first compute explicitly the first and second shape derivatives at the ball. Let \(B=B_1\), so that \(\nu(x)=x\) and \(H=n-1\). We use the notation
\(\mathcal P,\Lambda,\varphi_\ell\) introduced in Section~\ref{sec:intro}.
The torsion function of \(B\) is given by~\cite{Getoor,Dyda}
\begin{equation}\label{eq:ball-state}
 u(y)=A_n\sqrt{1-|y|^2},\qquad \text{with\ }
 A_n=\frac{\Gamma(n/2)}{\sqrt\pi\,\Gamma((n+1)/2)},
 \quad \psi=\sqrt2\,A_n,\quad \kappa=-\frac\psi4.
\end{equation}
The boundary coefficients follow by expanding
\(\sqrt{1-|x-r\nu(x)|^2}=\sqrt{2r}(1-r/4+O(r^2))\). In particular
\(\lambda_*=c_*\psi^2=\frac\pi2A_n^2\).

\begin{lemma}\label{lem:ball-large}
For \(f\in C^\infty(\S^{n-1})\),
\begin{equation}\label{eq:ball-large}
 Q_f(y)=\frac{\sqrt2}{\sqrt{1-|y|^2}}\,\mathcal Pf(y)\quad(y\in B),
 \qquad \mathcal D_Bf=\frac f4-\Lambda f.
\end{equation}
\end{lemma}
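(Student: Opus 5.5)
The plan is to verify the explicit formula directly and then invoke the uniqueness part of Proposition~\ref{prop:large}.

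Set $\tilde Q_f(y)=\sqrt2\,(1-|y|^2)^{-1/2}\mathcal Pf(y)$ in $B$ and $\tilde Q_f=0$ outside. Two properties have to be checked.

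\emph{Boundary structure.} We need $\sqrt\delta\,\tilde Q_f\in C^\infty(\overline B)$ with trace $f$. Near $\partial B$ we have $\delta=1-|y|$, so
\[
\sqrt\delta\,\tilde Q_f=\sqrt2\,(1+|y|)^{-1/2}\mathcal Pf .
\]
This is smooth up to the boundary and equals $f$ on $\S^{n-1}$. Away from the boundary, $\delta$ and $1-|y|^2$ are both positive and smooth, so there is nothing to check there.

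\emph{$\tfrac12$-harmonicity in $B$, in the distributional sense.} First treat $f=Y_\ell$ a spherical harmonic of degree $\ell$, so that $\mathcal Pf=|y|^\ell Y_\ell(y/|y|)$ is a harmonic polynomial $p$. Here I would cite the classical explicit formula of Dyda / Dyda--Kuznetsov--Kwa\'snicki for $(-\Delta)^{s}\big[(1-|y|^2)_+^{\beta}p\big]$ with $p$ a harmonic polynomial. In $B$ it is a hypergeometric expression carrying the factor $1/\Gamma(\beta-s+1)$, which vanishes at $\beta=s-1=-\tfrac12$.

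As a sanity check that does not use this formula, the cases $\ell=0,1$ follow from the torsion function~\eqref{eq:ball-state}:
\begin{itemize}
\item Translating $B$ in direction $e$ gives the Eulerian derivative $-e\cdot\nabla u=A_n(e\cdot y)(1-|y|^2)^{-1/2}$. This is $\tfrac12$-harmonic in $B$ by the argument of Proposition~\ref{prop:q}.
\item Dilations give $\partial_R\big(R\,A_n\sqrt{R^2-|y|^2}\big)$ at $R=1$. Subtracting the multiple of $u$ (which is not $\tfrac12$-harmonic, since $(-\Delta)^{1/2}u=1$) is harder, so for $\ell=0$ I would also rely on the cited formula.
\end{itemize}

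For general smooth $f$, expand $f=\sum_\ell f_\ell$; the series converges rapidly in every $C^k(\S^{n-1})$. Then $\mathcal Pf_N\to\mathcal Pf$ uniformly on $\overline B$ for the partial sums $f_N=\sum_{\ell\le N}f_\ell$. Since $(1-|y|^2)^{-1/2}\in L^1(B)$, this gives $\tilde Q_{f_N}\to\tilde Q_f$ in $L^1(\R^n)$. For $\chi\in C_c^\infty(B)$ the function $(-\Delta)^{1/2}\chi$ is bounded, so the identity $\int\tilde Q_{f_N}(-\Delta)^{1/2}\chi=0$ passes to the limit.

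With both properties established, uniqueness in Proposition~\ref{prop:large} yields $Q_f=\tilde Q_f$.

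\emph{Computation of $\mathcal D_B f$.} For $x\in\S^{n-1}$ we have $|x-rx|=1-r$, hence $1-|y|^2=2r(1-r/2)$ and
\[
\sqrt r\,Q_f(x-rx)=(1-r/2)^{-1/2}\,\mathcal Pf\big((1-r)x\big)=\big(1+\tfrac r4+O(r^2)\big)\big(f(x)-r\,\Lambda f(x)+O(r^2)\big).
\]
The second factor uses $\partial_r\mathcal Pf((1-r)x)|_{r=0}=-x\cdot\nabla\mathcal Pf=-\Lambda f$. Differentiating at $r=0$ as in~\eqref{eq:D} gives $\mathcal D_Bf=\frac f4-\Lambda f$.

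The main obstacle is the $\tfrac12$-harmonicity of $(1-|y|^2)_+^{-1/2}p$ for every harmonic polynomial $p$. My first choice is to quote the known hypergeometric identity. If a self-contained argument is preferred, I would derive it from the case $\ell=0$ (the function $(1-|y|^2)_+^{-1/2}$, obtained via the Riesz-type formula for $(-\Delta)^{1/2}(1-|y|^2)_+^{\beta}$ and analytic continuation in $\beta$), together with the standard fact that multiplying a radial profile by a solid harmonic corresponds to raising the dimension by $2\ell$ (the Bochner relation for Fourier transforms of $p(y)g(|y|)$).
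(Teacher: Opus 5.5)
Your proposal is correct. The boundary-structure check, the appeal to uniqueness in Proposition~\ref{prop:large}, and the expansion giving $\mathcal D_Bf=\frac f4-\Lambda f$ are the same as in the paper. The difference lies in how you establish $\tfrac12$-harmonicity.

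\textbf{The paper's route.} It does this in one stroke. The Martin kernel of $B$ for $(-\Delta)^{1/2}$ is proportional to
\[
(1-|y|^2)^{1/2}|y-z|^{-n}=(1-|y|^2)^{-1/2}\cdot(1-|y|^2)|y-z|^{-n},
\]
that is, $(1-|y|^2)^{-1/2}$ times the classical Poisson kernel. Integrating it against $f(z)\,d\mathcal H^{n-1}(z)$ therefore produces a multiple of $(1-|y|^2)^{-1/2}\mathcal Pf$ directly, as a superposition of $\tfrac12$-harmonic functions. This handles every $f$ at once, with no spherical-harmonic expansion and no $L^1$ approximation.

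\textbf{Your route.} You work degree by degree with the Dyda--Kuznetsov--Kwa\'snicki hypergeometric formula and then approximate. This cites a heavier identity. In exchange, it makes the cancellation mechanism explicit (the factor $1/\Gamma(\beta-s+1)$ vanishes at $\beta=s-1$), and it lets low degrees be checked by hand.

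\textbf{A slip in your $\ell=0$ check.} The scaling is off there. Since $(-\Delta)^{1/2}$ has order one, $u_{B_R}(y)=R\,u(y/R)=A_n\sqrt{R^2-|y|^2}$, not $R\,A_n\sqrt{R^2-|y|^2}$. Consequently
\[
\partial_Ru_{B_R}\big|_{R=1}=A_n(1-|y|^2)^{-1/2},
\]
and there is nothing to subtract. Differentiating $\int u_{B_R}(-\Delta)^{1/2}\chi=\int\chi$ in $R$ then gives the case $\ell=0$ immediately, and in every dimension.

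Combined with the Bochner relation you mention (applied in dimension $n+2\ell$), this makes your fallback essentially self-contained. The only external input it needs is the explicit torsion function of the ball, rather than either the Martin kernel or the hypergeometric formula.
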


\begin{proof}
The Martin kernel of \(B\) for \((-\Delta)^{1/2}\) is proportional to
\((1-|y|^2)^{1/2}|y-z|^{-n}\), \(z\in\S^{n-1}\)
\cite[(1.8)]{AJS}; see also~\cite{Aba}. Comparing it with the classical
Poisson kernel shows that the displayed function, extended by zero, is
\(\tfrac12\)-harmonic in \(B\). It has smooth weighted trace \(f\), so
Proposition~\ref{prop:large} identifies it with \(Q_f\). Moreover,
\[
 \sqrt r\,Q_f(x-r\nu)
 =(1-r/2)^{-1/2}\mathcal Pf(x-r\nu)
 =f+r\big(f/4-\Lambda f\big)+O(r^2),
\]
which gives the formula for \(\mathcal D_B\).
\end{proof}

Recall the notation  \(\Omega_h\) from the introduction and  set
\[
 \mathcal J(h)=T(\Omega_h),\qquad \mathcal V(h)=|\Omega_h|.
\]
For \(\varphi\in C^\infty(\S^{n-1})\) we write
\(T_\varphi(t)=\mathcal J(t\varphi)\) and
\(V_\varphi(t)=\mathcal V(t\varphi)\). Combining \eqref{eq:hadamard},
Theorem~\ref{thm:hessian} together with \eqref{eq:ball-state} and Lemma~\ref{lem:ball-large}
yields
\begin{equation}\label{eq:ball-hessian}
 T_\varphi'(0)=\lambda_*\int_{\S^{n-1}}\varphi\dHaus,
 \qquad
 T_\varphi''(0)
 =\lambda_*\int_{\S^{n-1}}
 \big(n\varphi^2-\varphi\Lambda\varphi\big)\dHaus.
\end{equation}
On the other hand, polar coordinates give
\begin{equation}\label{eq:ball-volume}
 \mathcal V(h)=\frac1n\int_{\S^{n-1}}(1+h)^n\dHaus,
\end{equation}
and hence
\begin{equation}\label{eq:ball-volume-deriv}
 V_\varphi'(0)=\int_{\S^{n-1}}\varphi\dHaus,\qquad
 V_\varphi''(0)=(n-1)\int_{\S^{n-1}}\varphi^2\dHaus.
\end{equation}

\begin{lemma}\label{lem:ball-exact-path}
Let \(t\mapsto h(t)\) be \(C^2\) with values in \(C^m(\S^{n-1})\) for
every \(m\), with \(h(0)=0\) and \(|\Omega_{h(t)}|=|B|\). Then
\(\varphi=\dot h(0)\) has zero mean and
\begin{equation}\label{eq:ball-Q}
 -\frac{d^2}{dt^2}T(\Omega_{h(t)})\Big|_{t=0}
 =\lambda_*\int_{\S^{n-1}}\big(\varphi\Lambda\varphi-\varphi^2\big)\dHaus.
\end{equation}
\end{lemma}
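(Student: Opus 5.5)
The plan is to reduce the statement to the straight-line computations \eqref{eq:ball-hessian} and \eqref{eq:ball-volume-deriv} through a second-order chain rule, using that the first variations of \(T\) and of the volume at \(B\) are proportional.

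\emph{Step 1: \(\mathcal J\) is \(C^2\) in the Fréchet sense.} Fix a cutoff \(\eta\in C_c^\infty((1/2,3/2))\) with \(\eta=1\) near \(1\). For \(g\in C^2(\S^{n-1})\) small, set \(\Phi_g(x)=x\bigl(1+\eta(|x|)g(x/|x|)\bigr)\). Then \(\Phi_g(B)=\Omega_g\), and \(g\mapsto\Phi_g-\mathrm{Id}\) is linear and bounded into \(W^{1,\infty}\).

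I would redo the proof of Proposition~\ref{prop:bulk} with the scalar parameter \(t\) replaced by \(g\) in a ball of \(C^1(\S^{n-1})\).
\begin{itemize}
\item The multiplier \(\rho_g\) of \eqref{eq:rho} depends smoothly on \(g\) with values in \(L^\infty(\R^{2n})\), by the same argument as \eqref{eq:rho-C2}, since \(\Xi_g\) is affine in \(g\).
\item The Jacobian \(J_g\) is polynomial in \(D\Phi_g\).
\item Hence \(a_g\in C^2(\mathcal L(V,V^*))\) and \(\ell_g\in C^2(V^*)\).
\item Uniform coercivity \eqref{eq:unif-coercive} and smoothness of operator inversion then show that \(v_g=A_g^{-1}\ell_g\) is \(C^2\) in \(g\).
\end{itemize}
Since \(\mathcal J(g)=F_g(v_g)=\ell_g(v_g)\), the map \(\mathcal J\) is \(C^2\) near \(0\) in \(C^1(\S^{n-1})\). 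The same conclusion for \(\mathcal V\) is immediate from \eqref{eq:ball-volume}. I expect this step to be the main, though routine, obstacle: one must check that every bound in Proposition~\ref{prop:bulk} used only \(W^{1,\infty}\) control of \(\Phi-\mathrm{Id}\), which seems to be the case.

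\emph{Step 2: derivatives at \(0\).} For the radial field above, \(\Phi_{t\varphi}=\mathrm{Id}+tX\), where \(X=\eta(|x|)\varphi(x/|x|)\,x\) equals \(\varphi(\theta)\nu(\theta)\) along normals near \(\S^{n-1}\). This is exactly the normal-graph setting of Theorem~\ref{thm:hessian}, because \(\Omega_{t\varphi}=\{(1+t\varphi)\theta\}\). Therefore
\[
D\mathcal J(0)[g]=T_g'(0)=\lambda_*\int_{\S^{n-1}} g
\quad\text{and}\quad
D^2\mathcal J(0)[\varphi,\varphi]=T_\varphi''(0),
\]
both given by \eqref{eq:ball-hessian}. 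Likewise \(D\mathcal V(0)[g]=\int g\) and \(D^2\mathcal V(0)[\varphi,\varphi]=(n-1)\int\varphi^2\), by \eqref{eq:ball-volume-deriv}. In particular \(D\mathcal J(0)=\lambda_*D\mathcal V(0)\).

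\emph{Step 3: chain rule along the path.} Since \(h\) is \(C^2\) into \(C^1(\S^{n-1})\), we have
\[
\frac{d^2}{dt^2}\mathcal J(h(t))\Big|_{0}=D^2\mathcal J(0)[\varphi,\varphi]+D\mathcal J(0)[\ddot h(0)],
\]
and the analogous identity holds for \(\mathcal V\). Differentiating \(\mathcal V(h(t))=|B|\) once gives \(\int\varphi=0\). Differentiating twice gives
\[
\int_{\S^{n-1}}\ddot h(0)=-(n-1)\int_{\S^{n-1}}\varphi^2 .
\]
Using \(D\mathcal J(0)=\lambda_*D\mathcal V(0)\), we obtain
\[
\frac{d^2}{dt^2}T(\Omega_{h(t)})\Big|_0=\lambda_*\int\bigl(n\varphi^2-\varphi\Lambda\varphi\bigr)-\lambda_*(n-1)\int\varphi^2=\lambda_*\int\bigl(\varphi^2-\varphi\Lambda\varphi\bigr),
\]
which is \eqref{eq:ball-Q}.
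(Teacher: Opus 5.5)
Your proposal is correct and is essentially the paper's proof: $C^2$ Fr\'echet regularity of $\mathcal J$ by rerunning Proposition~\ref{prop:bulk} with a Banach parameter, then the second-order chain rule. You eliminate $\int_{\S^{n-1}}\ddot h(0)$ using the twice-differentiated volume constraint together with $D\mathcal J(0)=\lambda_*D\mathcal V(0)$; the paper does the same thing by packaging it into the Lagrangian $-\mathcal J+\lambda_*(\mathcal V-|B|)$, whose first derivative vanishes.

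One harmless slip: your field $\eta(|x|)\varphi(x/|x|)\,x$ equals $|x|\varphi(\theta)\theta$ near the sphere, so it is not constant along normals. Theorem~\ref{thm:hessian} still applies, because $T(\Omega_{t\varphi})$ depends only on the domains, and these coincide with the $\Omega_t$ produced by the normal extension.
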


\begin{proof}
Differentiating \(\mathcal V(h(t))=|B|\) at \(t=0\) with
\eqref{eq:ball-volume} gives \(\int_{\S^{n-1}}\varphi=0\). We compare \(-T\) with the Lagrangian
\[
 \mathcal L(h)=-\mathcal J(h)+\lambda_*\big(\mathcal V(h)-|B|\big).
\]
 Along the path,
\(\mathcal L(h(t))=-T(\Omega_{h(t)})\), since the volume is fixed. We first
show that \(\mathcal L\) is of class \(C^2\) near \(0\) in
\(C^2(\S^{n-1})\). By \eqref{eq:ball-volume}, \(\mathcal V\) is a
polynomial in \(h\). For \(\mathcal J\), choose a smooth cutoff \(\chi\),
supported in \((-r_0,r_0)\) and equal to one near zero, and set
\(X_h(\rho\theta)=\chi(1-\rho)h(\theta)\theta\). The map \(h\mapsto X_h\) is
bounded and linear from \(C^2\) to \(W^{1,\infty}\), and
\(\mathrm{Id}+X_h\) is a bi-Lipschitz map of \(B\) onto \(\Omega_h\) for
small \(h\). The proof of Proposition~\ref{prop:bulk} applies with the
Banach parameter \(h\) in place of \(t\) and \(\mathrm{Id}+X_h\) in place
of \(\Phi_t\): it only uses that the deformation depends in a \(C^2\) way on
the parameter with values in \(W^{1,\infty}\), with uniform bi-Lipschitz
bounds. Hence \(\mathcal J\) is of class \(C^2\).

We now compute the derivatives of \(\mathcal L\) at \(0\) in the 
direction \(\varphi\). Set \[L(t)=\mathcal L(t\varphi)
=-T_\varphi(t)+\lambda_*(V_\varphi(t)-|B|).\] By \eqref{eq:ball-hessian}
and \eqref{eq:ball-volume-deriv},
\[
 L'(0)=0,\qquad
 L''(0)=-T_\varphi''(0)+\lambda_*V_\varphi''(0)
 =\lambda_*\int_{\S^{n-1}}\big(\varphi\Lambda\varphi-\varphi^2\big)\dHaus.
\]
Since \(\mathcal L\) is \(C^2\), \(D\mathcal L(0)[\varphi]=L'(0)\) and
\(D^2\mathcal L(0)[\varphi,\varphi]=L''(0)\). Finally, \(t\mapsto h(t)\) is \(C^2\) with values in \(C^2(\S^{n-1})\), so
the chain rule gives
\[
 \frac{d^2}{dt^2}\mathcal L(h(t))\Big|_{t=0}
 =D^2\mathcal L(0)[\varphi,\varphi]
 +D\mathcal L(0)[\ddot h(0)]
 =\lambda_*\int_{\S^{n-1}}\big(\varphi\Lambda\varphi-\varphi^2\big)\dHaus,
\]
because \(\ddot h(0)\) is smooth. Since \(\mathcal L(h(t))=-T(\Omega_{h(t)})\),
this is \eqref{eq:ball-Q}.
\end{proof}

\begin{proof}[Proof of Theorem~\ref{thm:ball-coercivity}]
Let \(h(t)\) be a smooth volume-preserving path, in the sense of
Lemma~\ref{lem:ball-exact-path}. By that lemma, \(\varphi=\dot h(0)\)
satisfies \eqref{eq:ball-volume-tangent}, and \eqref{Hessconstraint} holds
with \(\mathcal Q\) given by the first expression in \eqref{eq:constrained}.
Since \(\mathcal P\varphi_\ell(\rho\theta)=\rho^\ell\varphi_\ell(\theta)\),
we have \(\Lambda\varphi_\ell=\ell\varphi_\ell\), and orthogonality gives
the second expression in \eqref{eq:constrained}. Conditions
\eqref{eq:ball-volume-tangent} and \eqref{eq:ball-centering} are precisely
\(\varphi_0=\varphi_1=0\), and
\[
 \frac13(1+\ell)\le\ell-1\le1+\ell\qquad(\ell\ge2).
\]
This proves \eqref{eq:ball-coercivity}, with equality in the lower bound
exactly on harmonics of degree two. The upper constant is also optimal,
since \((\ell-1)/(\ell+1)\to1\).
\end{proof}

\section*{Acknowledgments}
The authors acknowledge financial support from the ANR grant STOIQUES.
MN is member of INdAM/GNAMPA. BR is a member of INdAM--GNAMPA and has been partially supported by the ``INdAM - GNAMPA Project'', code CUP E53C25002010001.

\medskip
\noindent\textbf{Use of AI.}
The proofs in this paper were found mostly with the help of a large language
model (LLM), which was also used to help write up the paper. The work involved
substantial interaction between the authors and the model. The authors have
checked the results and the text, and take full responsibility for the content
of the paper.

\bibliographystyle{acm}
\bibliography{biblio}

\end{document}